\newcommand{\color}[2][{}]{}         
\newtheoremstyle{mythmstyle}
  {\topsep}
  {\topsep}
  {\itshape}
  {}
  {\bfseries \sffamily}
  {.}
  {.5em}
  {}
\newtheoremstyle{mydefstyle}
  {\topsep}
  {\topsep}
  {\normalfont}
  {}
  {\bfseries \sffamily}
  {.}
  {.5em}
  {}
\theoremstyle{mythmstyle}
\newtheorem{thm}{Theorem}[section]      
\newtheorem{prop}[thm]{Proposition}  
\newtheorem{cor}[thm]{Corollary}      
\newtheorem{conjecture}[thm]{Conjecture}      
\newtheorem{lem}[thm]{Lemma}           
\theoremstyle{mydefstyle}
\newtheorem{defin}[thm]{Definition}
\newtheorem{ex}[thm]{Example}      
\newtheorem{rmk}[thm]{Remark}
\newcommand{\e}{\varepsilon}
\newcommand{\eps}{\varepsilon}  
\DeclareMathOperator{\Xe}{\mathit X_{\varepsilon}}
\DeclareMathOperator{\Xei}{\mathit X_{\varepsilon}^{\mathit{i}}}
\DeclareMathOperator{\Ee}{\mathit X_{\varepsilon,\mathit{e}}}
\DeclareMathOperator{\Ve}{\mathit X_{\varepsilon,\mathit{v}}}
\DeclareMathOperator{\V}{\mathit X_{\mathit{v}}}
\DeclareMathOperator{\Ye}{\mathit Y_{\varepsilon,\mathit{e}}}
\DeclareMathOperator{\Y}{\mathit Y_{\mathit{e}}}
\DeclareMathOperator{\dvol}{dvol}
\DeclareMathOperator{\Uv}{\mathit U_{\varepsilon,\mathit{v}}}
\DeclareMathOperator{\Xve}{\mathit X_{\varepsilon,\mathit{v},\mathit{e}}}
\DeclareMathOperator{\dom}{dom}
\DeclareMathOperator{\vol}{vol}
\DeclareMathOperator{\const}{const}
\newcommand{\normBC}{{\mathrm{norm}}} 
\newcommand{\tangBC}{{\mathrm{tan}}}  
\newcommand{\absBC}{{\mathrm{abs}}}  
\newcommand{\relBC}{{\mathrm{rel}}}  
\newcommand{\abs}[2][{}]{\lvert{#2}\rvert_{{#1}}}    
\newcommand{\exact}[1]{\bar{#1}}         
\newcommand{\coexact}[1]{\accentset={#1}}
\newcommand{\exactEV}[2]{\exact \lambda^{#2}_{#1}}
\newcommand{\exactEVabsBC}[2]{\exact \lambda^{#2,\absBC}_{#1}}
\newcommand{\coexactEV}[2]{\coexact \lambda^{#2}_{#1}}
\newcommand{\exactLapl}[2]{\exact \Delta^{#2}_{#1}}
\newcommand{\coexactLapl}[2]{\coexact \Delta^{#2}_{#1}}
\newcommand{\exact}[1]{#1^{\mathrm{ex}}} 
\newcommand{\coexact}[1]{#1^{\mathrm{co\text-ex}}} 
\newcommand{\exactEV}[2]{\lambda^{#2,\mathrm{ex}}_{#1}}
\newcommand{\coexactEV}[2]{\lambda^{#2,\mathrm{co-ex}}_{#1}}
\newcommand{\exactLapl}[2]{\Delta^{#2,\mathrm{ex}}_{#1}}
\newcommand{\coexactLapl}[2]{\Delta^{#2,\mathrm{co-ex}}_{#1}}
\newcommand{\wt}[1]{\widetilde{#1}}
\renewcommand{\phi}{\varphi}  
\newcommand{\Err}[1]{\mathcal O(#1)}
\newcommand{\specsymb} {\sigma} 
\newcommand{\spec}[2][{}]   {\specsymb_{\mathrm{#1}}(#2)}
\newcommand{\orient}[1]{\accentset{\curvearrowright}{#1}}
\newcommand{\R}{\mathbb{R}} 
\newcommand{\C}{\mathbb{C}} 
\newcommand{\N}{\mathbb{N}} 
\newcommand{\Sphere}{\mathbb{S}} 
\newcommand{\set}[2]{\{ #1 \, | \, #2 \} }      
\newcommand{\bigset}[2]{\bigl\{ #1 \, \bigl|\bigr. \, #2 \bigr\} }
\newcommand{\Bigset}[2]{\Bigl\{ #1 \, \Bigl|\Bigr. \, #2 \Bigr\} }
\newcommand{\BIGset}[2]{\Bigg\{ #1 \, \Bigg|\Bigg. \, #2 \Bigg\} }
\DeclareMathOperator*{\bigdcup}{\mathaccent\cdot{\bigcup}}
\DeclareMathOperator*{\dcup}   {\mathaccent\cdot\cup}
\DeclareMathOperator{\dd}    {d\!}  
\DeclareMathOperator{\id}    {id}  
\newenvironment{myitemize}{
\begin{itemize}
  \setlength{\itemsep}{1pt}
  \setlength{\parskip}{0pt}
  \setlength{\parsep}{0pt}
}{\end{itemize}}
  \newcommand{\look}[1]{}
  \newcommand{\lookO}[1]{}%
  \newcommand{\lookM}[1]{}%
  \newcommand{\markerO}{\fbox{\rule{0pt}{0.1ex}\textbf{Olaf}}}
  \newcommand{\markerM}{\fbox{\rule{0pt}{0.1ex}\textbf{Michela}}}
  \newcommand{\look}[1]{\textbf{*}
    \footnote{ #1 }}
  \newcommand{\lookO}[1]{\markerO\textbf{*}
    \footnote{\textbf{Olaf:} #1 }}
  \newcommand{\lookM}[1]{\markerM\textbf{*}
    \footnote{\textbf{Michela:} #1 }}
\title{Asymptotic behaviour of the Hodge Laplacian spectrum on  graph-like
  manifolds
  \ifthenelse{\isundefined \draft}{}
  {\newline \upshape{--- DRAFT---}}}
\author{Michela Egidi, Olaf Post}
\date{\today}}  
\date{\today, \thistime,  \emph{File:} \texttt{\jobname.tex}}} 
\begin{document} 

\maketitle 

\begin{abstract} 
  We consider a family of compact, oriented and connected
  $n$-dimensional manifolds $\Xe$ shrinking to a metric graph as
  $\e\to 0$ and describe the asymptotic behaviour of the eigenvalues
  of the Hodge Laplacian on $\Xe$.  We apply our results to produce
  manifolds with spectral gaps of arbitrarily large size in the
  spectrum of the Hodge Laplacian.
\end{abstract}

\tableofcontents
\section{Introduction}
\label{intro}
%

\subsection{Motivation}
A graph-like manifold is a family of compact, oriented and connected
$n$-dimensional Riemannian manifolds $\{X_{\e}\}_{\e>0}$ made of
building blocks according to the structure of a given metric graph,
i.e.\ a graph where each edge is associated a length.  The manifolds
$\Xe$ have the property that they shrink to the metric graph as $\e\to
0$.  A graph-like manifold is constructed from \emph{edge
  neibhourhoods} $\Ee=[0,\ell_e] \times \e \Y$ and \emph{vertex
  neighbourhoods} $\Ve$ according to the underlying graph.  The
shrinking parameter $\e$ is, roughly speaking, the radius of the
tubular neighbourhood, or in other words, the length scaling factor of
the transversal manifold $\Y$ at an edge $e$.  A precise definition is
given in Section~\ref{sec:g-like-mfds}.

Graph-like manifolds have been used in purely mathematical contexts as
well as in applications in Physics.  One prominent example in spectral
geometry is given by Colin de Verdi\`ere in~\cite{colin:86b}, where he
proved that the first non-zero eigenvalue of a compact manifold of
dimension $n \ge 3$ can have arbitrary large multiplicity.  In
Physics, graph-like manifolds, or more concrete, small neighbourhoods
of metric graphs embedded in $\R^n$ are used to model electronic or
optic nano-structures.  The natural question arising is if the
underlying metric graph is a good approximation for the graph-like
manifold.

The Laplacian $\Delta^0_{\Xe}$ on \emph{functions} on graph-like
manifolds has been analysed in detail, and the convergence of various
objects such as resolvents (in a suitable sense), spectrum etc.\ is
established in many contexts, see again~\cite{post:12,exner-post:13}
for more details and references.

\subsection{Aim of this article and main results}
The aim of this article is to consider the eigenvalues of the
Laplacian $\Delta^\bullet_{\Xe}$ acting on \emph{differential forms}
on the graph-like manifold and analyse their behaviour as $\e\to 0$.
To fix the notation in more detail, denote by $\Delta_{\Xe}^p$ the
Laplacian acting on $p$-forms on $\Xe$.  Any $p$-form can be
decomposed into its exact, co-exact and harmonic component
(see~\eqref{eq:hodge} for details), and the Laplacian leaves this
decomposition invariant.  We denote the $j$-th eigenvalue of the
Laplacian acting on exact resp.\ co-exact $p$-forms on $\Xe$ counted
with respect to multiplicity by $\exactEV j p(\Xe)$ resp.\ $\coexactEV
j p(\Xe)$ ($j=1,2,\dots$) and call them for short \emph{exact} and
\emph{co-exact eigenvalues}.  (Throughout this article, we will use
the labels $\exact \bullet$ and $\coexact \bullet$ for exact and
co-exact eigenvalues of Laplacians, respectively.)  By Hodge duality
and ``supersymmetry'' (the exterior derivative $d$ resp.\ its (formal)
adjoint $d^*$ is an isomorphism between co-exact and exact eigenforms
resp.\ vice versa, see the proof of Theorem~\ref{thm:exact}), we have
\begin{equation}
  \label{eq:trivial-ev-rel}
  \exactEV j p(\Xe)
  = \coexactEV j {n-p} (\Xe)
  \quad\text{and}\quad
  \exactEV j p (\Xe)
  = \coexactEV j {p-1}(\Xe),
  \qquad (j=1,2,\dots)
\end{equation}
so that it suffices to consider only the exact eigenvalues $\exactEV j
p (\Xe)$ for $1 \le p < n/2+1$ or the co-exact eigenvalues $\coexactEV
j p (\Xe)$ for $0 \le p < n/2$. 
In particular, if $n=2$, then the entire (non-zero) spectrum of the
differential form Laplacian is determined by its Laplacian on
functions.  This case can be considered as ``trivial'' in this
article, as the convergence for functions has already been established
in earlier works (see again~\cite{post:12,exner-post:13} and
references therein).

On the metric graph, we have also the notion of $p$-forms, but only
co-exact $0$-forms and exact $1$-forms are non-trivial (apart from the
harmonic forms determined by the topology of the graph).  We denote
the spectrum of the Laplacian on $0$-forms (functions) and (exact)
$1$-forms on the graph $X_0$ by $\lambda_j^0(X_0)$ and
$\lambda_j^1(X_0)$, respectively ($j=1,2,\dots$). Note that we have
$\lambda_j^1(X_0)=\lambda_j^0(X_0)$ so that we simply write
\begin{equation}
  \label{eq:trivial-ev-rel-graph}
  \lambda_j(X_0) 
  :=\lambda_j^1(X_0)
  =\lambda_j^0(X_0)
  \qquad\qquad (j=1,2,\dots)
\end{equation}
and speak of \emph{the} eigenvalues of the metric graph.  To be
consistent with the limit, we also set $\lambda_j(\Xe) := \exactEV j
1(\Xe)=\coexactEV j 0(\Xe)$.

Recall that $\Y$ denotes the transversal manifold at the edge $e$.
Our main result of this article is now the following:
\begin{thm}
  \label{thm:main}
  Let $\Xe$ be a graph-like Riemannian $n$-dimensional compact
  manifold and $X_0$ its underlying metric graph, then the following
  is true:
  \begin{subequations}
    \label{eq:ev-forms}
    \begin{enumerate}
    \item
      \label{thm.main.i}
      The $0$-form eigenvalues, or equivalently, the exact
      $1$-form eigenvalues of $\Xe$ converge to the eigenvalues of
      $X_0$, i.e.,
      \begin{equation}
        \label{eq:ev-0-forms}
        \lambda_j(\Xe) = \exactEV j 1(\Xe) 
        \underset{\e\to 0}{\longrightarrow}
        \lambda_j(X_0)
      \end{equation}
      for all $j=1,2,\dots$.
    \item
      \label{thm.main.ii}
      Assume that $n \ge 3$ and $2 \le p \le n-1$, and that all
      transversal manifolds $\Y$ have trivial $(p-1)$-th cohomology group
      (i.e., $H^{p-1}(Y_e)=0$ for all edges $e$), then we have
      \begin{equation}
        \label{eq:ev-1-forms}
        \exactEV 1 p(\Xe)
        =\coexactEV 1 {p-1}(\Xe)
        \underset{\e\to 0}{\longrightarrow}
        \infty
      \end{equation}
      for the first eigenvalue of exact $p$-forms on $\Xe$.
    \end{enumerate}
  \end{subequations}
\end{thm}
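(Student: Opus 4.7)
The plan is as follows. For part (i), the convergence of the $0$-form (equivalently exact $1$-form) eigenvalues is the now-classical convergence of the scalar Laplacian on a graph-like manifold to the Laplacian on its underlying metric graph, established in the works cited in Section~\ref{intro}. My intention is to reuse that machinery: an averaging map $L^2(X_\varepsilon) \to L^2(X_0)$ given by transversal integration on edge neighbourhoods together with its approximate inverse by constant extension in the fibre, which intertwine the two scalar Laplacians up to an error of order $\varepsilon$, yielding convergence of each individual eigenvalue via min-max.

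For part (ii), I will use the identity $\exactEV{1}{p}(X_\varepsilon) = \coexactEV{1}{p-1}(X_\varepsilon)$ from~\eqref{eq:trivial-ev-rel} to reduce the problem to co-exact $(p-1)$-forms. Any $\omega$ orthogonal to $\ker d$ is automatically co-exact, hence satisfies $d^*\omega = 0$, and the Rayleigh quotient for the Hodge Laplacian equals $(\|d\omega\|^2 + \|d^*\omega\|^2)/\|\omega\|^2$, so it suffices to prove
\begin{equation*}
  \|d\omega\|^2_{X_\varepsilon} + \|d^*\omega\|^2_{X_\varepsilon} \;\geq\; c\, \varepsilon^{-2} \|\omega\|^2_{X_\varepsilon}
\end{equation*}
for all $(p-1)$-forms $\omega$ orthogonal to $\ker d$, with $c>0$ independent of $\varepsilon$. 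On a single edge neighbourhood $X_{\varepsilon,e} \cong [0,\ell_e] \times \varepsilon Y_e$ with product metric $dt^2 + \varepsilon^2 g_{Y_e}$, decompose $\omega = \alpha_t + dt \wedge \beta_t$ into a tangential $(p-1)$-form $\alpha_t$ and a normal $(p-2)$-form $\beta_t$ on $Y_e$. The product structure of $\Delta$ then gives
\begin{equation*}
  \|d\omega\|^2 + \|d^*\omega\|^2 = \int_0^{\ell_e} \bigl(\langle\Delta^{p-1}_{\varepsilon Y_e}\alpha_t,\alpha_t\rangle + \|\partial_t\alpha_t\|^2 + \langle\Delta^{p-2}_{\varepsilon Y_e}\beta_t,\beta_t\rangle + \|\partial_t\beta_t\|^2\bigr)\, dt
\end{equation*}
modulo boundary terms handled by smooth cut-offs. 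The hypothesis $H^{p-1}(Y_e) = 0$ forces the first eigenvalue $\mu_e$ of $\Delta^{p-1}_{Y_e}$ to be strictly positive, so $\Delta^{p-1}_{\varepsilon Y_e} \geq \mu_e/\varepsilon^2$, which controls the $\alpha_t$-contribution.

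The genuine obstacle is that $H^{p-2}(Y_e)$ is not assumed trivial, so $\beta_t$ may carry a harmonic component $\beta_t^{\mathrm{har}}$ on which $\Delta^{p-2}_{\varepsilon Y_e}$ yields no lower bound --- indeed, a $t$-constant $dt \wedge \beta^{\mathrm{har}}$ is locally harmonic on an edge cylinder and contributes nothing to the local Rayleigh quotient. Here the global constraint $\omega \perp \ker d$ has to be brought to bear, together with a vertex-neighbourhood analysis: $U_{\varepsilon,v} \cong \varepsilon U_v$ has Laplacian on $(p-1)$-forms with spectrum starting at order $\varepsilon^{-2}$ by scaling, so any longitudinal harmonic mode extended across vertices pays an $O(\varepsilon^{-2})$ price in Dirichlet energy unless the resulting global form is closed. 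I plan to make this precise either through a partition of unity subordinate to the edge/vertex decomposition combined with direct energy estimates, or via an indirect compactness argument: boundedness of $\coexactEV{1}{p-1}(X_{\varepsilon_k})$ along some $\varepsilon_k \to 0$ would force normalised eigenforms to concentrate in longitudinal harmonic modes, whence the identification maps from part (i) (extended to the form setting) would push them to a nonzero $(p-1)$-form on the $1$-dimensional graph $X_0$ --- impossible for $p \geq 2$. This global step is what I anticipate to be the hardest part of the proof.
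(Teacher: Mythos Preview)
Your part~(i) is fine and matches the paper: the convergence of the scalar spectrum is quoted from prior work, and supersymmetry gives the exact $1$-form statement.

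For part~(ii) you take a genuinely different route from the paper, and there is a real gap. The paper does \emph{not} work with co-exact $(p-1)$-forms and direct energy estimates on the product cylinders. Instead it applies McGowan's lemma (Proposition~\ref{prop:gentile-pagliara:95}) to the cover $\{U_{\varepsilon,v}\}_v \cup \{X_{\varepsilon,e}\}_e$: the first exact $p$-form eigenvalue of $X_\varepsilon$ is bounded below in terms of (a) the first exact $p$-form eigenvalues of the pieces with absolute boundary conditions, and (b) the first exact $(p-1)$-form eigenvalues of the pairwise intersections, \emph{provided} those intersections have trivial $(p-1)$-cohomology. The intersections are homotopy-equivalent to $Y_e$, so the hypothesis $H^{p-1}(Y_e)=0$ is exactly the input McGowan's lemma needs. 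The eigenvalues of the building blocks are then shown to scale like $\varepsilon^{-2}$ (Corollary~\ref{prop:vertex} for vertex pieces, Proposition~\ref{prop:edge} for edge pieces), and the lemma assembles these into a global $\varepsilon^{-2}$ lower bound. The Mayer--Vietoris bookkeeping that you anticipate as ``the hardest part'' is thus outsourced to McGowan's lemma, and the hypothesis on $H^{p-1}(Y_e)$ enters precisely once, as the cohomological obstruction in that lemma.

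Your approach runs into the obstacle you correctly flag: $H^{p-2}(Y_e)$ is uncontrolled, so the normal component $\beta_t$ can carry transversal harmonic modes. Your compactness sketch to resolve this has a concrete error. You claim the limit of the concentrated eigenforms would be a nonzero $(p-1)$-form on the $1$-dimensional graph $X_0$, ``impossible for $p\ge 2$''. But for $p=2$ this is a $1$-form on $X_0$, and those certainly exist; indeed the longitudinal mode $g(t)\,dt$ (the case $\beta^{\mathrm{har}}\in H^0(Y_e)$, which is never trivial) is precisely a $1$-form on the underlying interval. So your contradiction fails in the most basic case $n=3$, $p=2$. To repair this you would need to show that co-exactness on $X_\varepsilon$ forces the limiting graph $1$-form to be co-exact on $X_0$, hence zero; but co-exactness is an $L^2$-orthogonality to \emph{global} closed forms, and the bad modes $g(t)\,dt\wedge\beta^{\mathrm{har}}$ are only locally closed on a single edge cylinder, so this passage is not automatic and would itself require a gluing argument across vertices of essentially the same strength as what you are trying to prove. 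The paper's use of McGowan's lemma is exactly the device that packages this gluing cleanly.
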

As a consequence of our eigenvalue ordering, all other eigenvalues
$\exactEV j p(\Xe)$ ($j=1,2,\dots$) for $2 \le p \le n-1$ diverge,
too.  We emphasise that the first part on the eigenvalue convergence
is a simple consequence of the convergence for the eigenvalues on
$0$-forms (functions) already established in earlier works (see
again~\cite{exner-post:05, post:12} and references therein).

We say that the graph-like manifold $X_\eps$ is \emph{transversally
  trivial} if all transversal manifolds $\Y$ are Moore spaces, i.e.,
they have trivial cohomology in the sense that $H^p(\Y)\ne 0$ only for
$p=0$ and $p=n-1$ (see Example~\ref{ex:gl-mfd} for a construction of
such manifolds).  

If we assume that $X_\eps$ is transversally trivial, we can summarise
our result as follows:
\begin{myitemize}
\item For functions, the convergence result has been established
  before. Duality gives convergence for $n$-forms.
\item For \emph{exact} $1$-forms, we also have convergence due to
  supersymmetry.  Duality gives also convergence for co-exact
  $(n-1)$-forms.
\item For \emph{co-exact} $1$-forms and hence exact $(n-1)$-forms,
  we have divergence (this case only appears if $n \ge 3$).
\item All other $p$-forms with $2 \le p \le n-2$ diverge (this case
  only appears if $n \ge 4$).
\end{myitemize}
If the cohomology of $\Y$ is non-trivial, then the above list is still
true, but divergence only happens for higher eigenvalues, i.e.,
$\exactEV j p(\Xe) =\coexactEV j {p-1}(\Xe) \longrightarrow \infty$ as
$\e\to 0$ for $j \ge N$, where $N$ can be computed using the
$(p-1)$-Betti numbers of the transversal manifolds $\Y$, see
Theorem~\ref{prop2} for details.  It remains an open question what
happens to the first $N-1$ (non-zero) eigenvalues (see
Rem.~\ref{rmk:eigenvalues}).

As a consequence of the above theorem, we obtain the following result
(for the notion of Hausdorff convergence, see
Section~\ref{sec:hausdorff}):
\begin{cor}
  \label{cor:main.conv}
  Assume that the graph-like manifold is transversally trivial (i.e.,
  all transversal manifolds $\Y$ have trivial cohomology for
  $p=1,\dots, n-2$).  Then the spectrum of the differential form
  Laplacian converges to the spectrum of the metric graph.  More
  precisely, for all $\lambda_0>0$ we have that
  $\sigma(\Delta^\bullet_ {\Xe}) \cap [0,\lambda_0]$ converges in
  Hausdorff distance to $\sigma(\Delta_{X_0}) \cap [0,\lambda_0]$.
\end{cor}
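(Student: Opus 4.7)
The plan is to read off the corollary from Theorem~\ref{thm:main} by decomposing the full differential-form spectrum via Hodge theory and observing that, on any fixed interval $[0,\lambda_0]$, only the ``graph-like'' eigenvalues survive as $\e \to 0$.

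I would first invoke the Hodge decomposition of $\Delta^\bullet_{\Xe} = \bigoplus_{p=0}^n \Delta^p_{\Xe}$ on each degree into harmonic, exact and co-exact parts, to write
\[
 \sigma\bigl(\Delta^\bullet_{\Xe}\bigr)
 = \{0\} \cup \bigcup_{p=0}^{n} \bigl(\{\exactEV j p(\Xe)\}_{j\ge 1} \cup \{\coexactEV j p(\Xe)\}_{j\ge 1}\bigr),
\]
where $0$ comes from harmonic forms (always present in degrees $0$ and $n$). The duality and supersymmetry relations in~\eqref{eq:trivial-ev-rel} then collapse these families into two groups: the \emph{graph eigenvalues} $\lambda_j(\Xe) = \exactEV j 1(\Xe)$, contributed by the degrees $p \in \{0,1,n-1,n\}$, and the \emph{transversal eigenvalues} $\exactEV j p(\Xe)$ for $2 \le p \le n-1$, which also absorb the co-exact $1$-forms via $\coexactEV j 1(\Xe) = \exactEV j 2(\Xe)$.

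Next, under transversal triviality $H^{p-1}(Y_e) = 0$ for every $p \in \{2,\ldots,n-1\}$ (since $p-1$ is strictly between $0$ and $n-1$), so Theorem~\ref{thm:main}\,(ii) applies and gives $\exactEV 1 p(\Xe) \to \infty$; monotonicity of the eigenvalue ordering then forces every $\exactEV j p(\Xe)$ to diverge. Consequently, for each fixed $\lambda_0 > 0$ there is $\eps_0 > 0$ such that for all $0 < \e < \eps_0$,
\[
 \sigma\bigl(\Delta^\bullet_{\Xe}\bigr) \cap [0,\lambda_0]
 = \bigl(\{0\} \cup \{\lambda_j(\Xe) : j \ge 1\}\bigr) \cap [0,\lambda_0].
\]
This is a finite set, since $\lambda_j(X_0) \to \infty$ and Theorem~\ref{thm:main}\,(i) gives $\lambda_j(\Xe) \to \lambda_j(X_0)$ for each $j$.

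Finally, the Hausdorff convergence on $[0,\lambda_0]$ follows from the pointwise convergence of these finitely many remaining eigenvalues: every point of $\sigma(\Delta_{X_0}) \cap [0,\lambda_0]$ is the limit of eigenvalues of $\Delta^\bullet_{\Xe}$, and conversely every eigenvalue of $\Delta^\bullet_{\Xe}$ in $[0,\lambda_0]$ is, for small $\e$, close to a point of $\sigma(\Delta_{X_0})$. A minor technicality is the endpoint: if $\lambda_0 \in \sigma(\Delta_{X_0})$, I would work on a slightly enlarged window $[0, \lambda_0 + \delta]$ with $\lambda_0 + \delta \notin \sigma(\Delta_{X_0})$ and then let $\delta \to 0$. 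Beyond this book-keeping, there is no substantive obstacle — the real work is contained in Theorem~\ref{thm:main}, while the corollary is essentially a cataloguing of which Hodge components converge and which diverge.
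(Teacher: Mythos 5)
Your proposal is correct and follows exactly the paper's route: the paper disposes of this corollary in one sentence (in Section~\ref{sec:hausdorff}), observing that on a compact interval $[0,\lambda_0]$ the divergent higher-form eigenvalues eventually leave the interval while the remaining ones converge by Theorem~\ref{thm:main}, and your write-up simply fills in the Hodge/duality bookkeeping behind that sentence. Your handling of the endpoint case $\lambda_0\in\sigma(\Delta_{X_0})$ is a sensible extra precaution that the paper glosses over.
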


Furthermore, we asked ourselves about the relation between spectral
gaps in the spectrum of the Laplacian acting on 1-forms on $\Xe$ and
$X_0$, i.e., about intervals $(a,b)$ \emph{not} belonging to the
spectrum.  In particular, we have as immediate consequence of the
asymptotic description of the spectrum in Theorem~\ref{thm:main}
resp.\ Corollary~\ref{cor:main.conv} the following result on spectral
gaps (i.e.intervals \emph{disjoint} with the spectrum):
\begin{cor}
  \label{cor:main}
  Assume that the graph-like manifold is transversally trivial, and
  suppose that $(a_0,b_0)$ is a spectral gap for the metric graph
  $X_0$ then there exist $a_\e$, $b_\e$ with $a_\e \to a_0$ and $b_\e
  \to b_0$ such that $(a_\e, b_\e)$ is a spectral gap for the Hodge
  Laplacian on $\Xe$ on all degrees, i.e.,
  $\sigma(\Delta^\bullet_{\Xe}) \cap (a_\e,b_\e)=\emptyset$.
\end{cor}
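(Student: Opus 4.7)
The plan is to derive Corollary~\ref{cor:main} directly from the Hausdorff convergence of spectra in Corollary~\ref{cor:main.conv}: persistence of spectral gaps under Hausdorff convergence of spectra on a bounded range is a standard soft argument, and the preceding corollary does all of the analytic work.

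First, I would fix any $\lambda_0 > b_0$ so that $(a_0,b_0) \subset [0,\lambda_0]$, and let $\delta_\e$ denote the Hausdorff distance between $\sigma(\Delta^\bullet_{\Xe}) \cap [0,\lambda_0]$ and $\sigma(\Delta_{X_0}) \cap [0,\lambda_0]$; by Corollary~\ref{cor:main.conv}, $\delta_\e \to 0$ as $\e \to 0$. From the definition of Hausdorff distance, every element of $\sigma(\Delta^\bullet_{\Xe}) \cap [0,\lambda_0]$ lies within $\delta_\e$ of some element of $\sigma(\Delta_{X_0}) \cap [0,\lambda_0]$.

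Second, for $\e$ small enough that $2\delta_\e < b_0 - a_0$, I would show that $\sigma(\Delta^\bullet_{\Xe})$ contains no point of the shrunken interval $(a_0+\delta_\e,\, b_0-\delta_\e)$: if $\mu \in \sigma(\Delta^\bullet_{\Xe})$ lay in this interval, the Hausdorff bound would supply some $\nu \in \sigma(\Delta_{X_0}) \cap [0,\lambda_0]$ with $|\mu-\nu|<\delta_\e$, forcing $\nu \in (a_0,b_0)$ and contradicting the hypothesis that $(a_0,b_0)$ is a gap for the metric graph. Setting $a_\e := a_0 + \delta_\e$ and $b_\e := b_0 - \delta_\e$ then satisfies all requirements of the corollary, as $(a_\e,b_\e) \cap \sigma(\Delta^\bullet_{\Xe}) = \emptyset$ and $a_\e \to a_0$, $b_\e \to b_0$ as $\e \to 0$.

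I do not anticipate a substantive obstacle: once the Hausdorff convergence of the truncated spectra is in hand, the persistence of spectral gaps is essentially formal, and only the ``one-sided'' inclusion of $\sigma(\Delta^\bullet_{\Xe})$ into a neighbourhood of $\sigma(\Delta_{X_0})$ is actually invoked. The one minor point requiring care is the choice of the cutoff $\lambda_0$, which must be finite and large enough to enclose the gap; since $b_0 < \infty$ this is immediate.
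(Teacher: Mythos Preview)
Your proposal is correct and follows exactly the route the paper takes: the paper states that Corollary~\ref{cor:main} ``is again an immediate consequence of the Hausdorff convergence of Corollary~\ref{cor:main.conv}'' without spelling out the details, and you have simply written out that soft argument explicitly. The only (harmless) imprecision is that the Hausdorff bound yields $|\mu-\nu|\le\delta_\e$ rather than strict inequality, but since $\mu\in(a_0+\delta_\e,b_0-\delta_\e)$ this still forces $\nu\in(a_0,b_0)$.
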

In our applications below, $a_\eps=a=0$ (as $0$ is always an
eigenvalue of the (entire) Hodge Laplacian on all degrees), hence we
can choose $(0,b_\eps)$ as common spectral gap.

\subsection{Related works}

\paragraph{Bounds on first non-zero eigenvalues:}

There has been some work about the spectral gap at the bottom of the
spectrum (i.e.\ estimates of the first non-zero eigenvalue).  For
example, one can consider the quantity
\begin{equation}
  \label{eq:lambda1.est}
  \kappa(L,X,g) := \lambda_1(L) (\vol_n(X,g))^{m/n},
\end{equation}
where $L$ is an elliptic operator of order $m$ on the $n$-dimensional
compact Riemanian manifold $(X,g)$ (the powers assure that $\kappa$ is
scale-invariant, i.e., if $\wt g=\tau^2g$ for some constant $\tau>0$,
then $\wt L=\tau^{-m}L$ and $\vol_n(X,\wt g)=\tau^n(\vol_n(X,g)$,
hence $\kappa(\wt L,X,\wt g)=\kappa(L,X,g)$.  Berger~\cite{berger:73}
asked whether
 \begin{equation*}
   \sup_{\text{$g$ metric on $X$}} \kappa(\Delta_{(X,g)},X,g)
\end{equation*}
is finite on a given manifold $X$.  The answer is yes in dimension $2$
with constant 
\begin{equation}
  \label{eq:kappa.n=2}
  \kappa(\Delta_{(X,g)},X,g)
  \le 8 \pi(\gamma(X)+1)
\end{equation}
for a surface of genus $\gamma(X)$ (\cite{yang-yau:80}).  Our analysis
shows that the bound is optimal in the sense that for any $\delta>0$
there is a sequence of Riemannian surfaces $(X_i,g_i)$ of genus
$\gamma(X_i) \to \infty$ (graph-like manifolds based on Ramanujan
graphs) such that
\begin{equation}
  \label{eq:kappa.n=2.counterex}
  \kappa(\Delta_{(X_i,g_i)},X_i,g_i) 
  \eqsim \gamma(X_i)^{1-\delta},
\end{equation}
i.e., the bound $\gamma(X_i)$ in~\eqref{eq:kappa.n=2} is
asymptotically optimal (see Corollary~\ref{cor:div.fcts.n=2}).  This
result is very much in the spirit of~\cite{colbois-girouard:14}, where
Colbois and Giruoard construct graph-like manifolds $X_i$ based on
Ramanujan graphs.  They use as operator $L$ the Dirichlet-to-Neumann
operator on $\partial X_i$ with metric $h_i$, and they show that
$\kappa(\Lambda_{(\partial X_i,h_i)},\partial X_i,h_i)/\gamma(X_i)$ is
uniformly bounded from below, in particular,
$\kappa(\Lambda_{(\partial X_i,h_i)},\partial X_i,h_i) \to \infty$ as
$i \to \infty$.

For higher dimensions, the answer to the finiteness of
$\kappa(\Delta_{(X,g)},X,g)$ is no, as on a compact manifold $X$ of
dimension $3$ or higher, there are sequences of metrics $g_i$ such
that $\kappa(\Delta_{(X,g_i)},X,g_i) \to \infty$ as $i \to \infty$
(see e.g.\cite{colbois-dodziuk:94} and references therein).

For $L$ being the Laplacian on $p$-forms ($2 \le p \le n-2$) with $n
\ge 4$, there are also examples of metrics $g_i$ on a given manifold
such that $\kappa(L,X,g_i)$ tends to infinity
(see~\cite{gentile-pagliara:95}).  Actually, they proved the result
for \emph{exact} $p$-forms ($2 \le p \le n-1$), allowing also $n=3$.
We rediscover their result (Proposition~\ref{prp:ex.g-p}).
Unfortunately, it seems to be impossible to construct a sequence of
metrics such that
$\kappa^\bullet_i:=\kappa(\Delta_{(X,g_i)}^\bullet,X,g_i)$ tends to
infinity for the \emph{entire} Hodge Laplacian
$\Delta_{(X,g_i)}^\bullet$ acting on all degrees (including
functions), as on any manifold $X$ of dimension $n \ge 3$ there is a
sequence of metrics such that the (rescaled) exact $p$-form spectrum
diverges, while the function spectrum converges
(Corollary~\ref{cor:any.mfd}).
\begin{conjecture}
  We conjecture that on a compact manifold $X$ of dimension $n \ge 3$,
  we have a uniform bound on the entire (non-zero) Hodge Laplace
  spectrum
  \begin{equation}
    \label{eq:kappa.hodge}
    \sup_{\text{$g$ metric on $X$}} 
    \kappa(\Delta_{(X,g)}^\bullet,X,g) < \infty.
  \end{equation}
\end{conjecture}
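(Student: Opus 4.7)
Since the final statement is a \emph{conjecture} and not a proved theorem, the task is to sketch a plausible line of attack rather than to give a complete argument.  The plan centres on the observation that the conjecture is fundamentally a statement about the interaction between distinct form degrees: by the results just cited (Colbois--Dodziuk in degree $0$, Gentile--Pagliara in intermediate degrees, and the paper's own Corollary~\ref{cor:any.mfd}) one \emph{can} make $\kappa(\Delta^p_{(X,g_i)},X,g_i)\to\infty$ for any single fixed $p$; the conjecture asserts that such pathological sequences cannot coexist in every degree simultaneously.

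\textbf{Step 1: reduction via duality and supersymmetry.}  First I would rewrite $\kappa^\bullet(X,g)$ using only exact eigenvalues in the lower half of the degree range.  Since every $p$-form decomposes into exact, co-exact and harmonic components,
\begin{equation*}
  \lambda_1(\Delta^p_{(X,g)})
  = \min\bigl(\exactEV{1}{p}(X,g),\,\coexactEV{1}{p}(X,g)\bigr),
\end{equation*}
and combining Hodge duality with the supersymmetry relation in~\eqref{eq:trivial-ev-rel} gives
\begin{equation*}
  \min_{0 \le p \le n}\lambda_1(\Delta^p_{(X,g)})
  \;=\; \min_{1 \le p \le \lceil n/2\rceil}\exactEV{1}{p}(X,g).
\end{equation*}
It is therefore enough to exhibit, for every unit-volume metric $g$ on $X$, some $p\in\{1,\dots,\lceil n/2\rceil\}$ and an exact test $p$-form $\omega=d\eta$ whose Rayleigh quotient $\normsqr{d\eta}/\normsqr{\eta}$ (taken on the orthogonal complement of $\ker d$) is bounded above by a universal constant depending only on $X$ and $n$.

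\textbf{Step 2: a dichotomous geometric construction.}  For $p=1$ the classical Cheeger cut already produces such a bound whenever $X$ admits a bisecting hypersurface of small area relative to its volume.  I would argue dichotomously: either $(X,g)$ admits such a small-area bisection, in which case the standard Cheeger test function controls $\exactEV{1}{1}$, or $(X,g)$ is ``fat on every scale'' in some quantitative sense, in which case one should invoke the mechanism dual to Theorem~\ref{thm:main}.  Namely, thin edge-neighbourhoods force $\exactEV{1}{p}$ to diverge for $p\ge 2$; conversely, a uniformly fat manifold ought to contain short, homologically trivial codimension-$p$ cycles whose de Rham/Whitney dual $(p-1)$-forms $\eta$ have small $\normsqr{d\eta}/\normsqr{\eta}$.

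\textbf{Main obstacle.}  The genuine difficulty is the lack of any Cheeger/Buser-type isoperimetric inequality for the Hodge Laplacian in degrees $p\ge 2$; indeed the examples of Gentile--Pagliara preclude a \emph{single-degree} such inequality, which is precisely why the conjecture is formulated for the entire Hodge spectrum.  I would expect the attempt either to succeed by identifying a combined geometric invariant---perhaps a weighted $p$-isoperimetric profile that cannot be simultaneously large in every degree---or to fail in a way that suggests an explicit counterexample.  A natural testing ground is the case $n=3$, where Step~1 reduces the whole question to a uniform upper bound on $\min\bigl(\exactEV{1}{1}(X,g),\,\exactEV{1}{2}(X,g)\bigr)\cdot\vol_3(X,g)^{2/3}$; already this three-dimensional case appears highly nontrivial, and settling it would probably dictate the shape of the general argument.
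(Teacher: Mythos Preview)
The paper does not prove this statement; it is explicitly labelled a \emph{conjecture} and left open, with only the remark that the case $n=2$ is consistent via the Yang--Yau bound~\eqref{eq:kappa.n=2}. There is therefore no proof in the paper to compare your attempt against, and you correctly identify this at the outset.

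Your sketch is a reasonable heuristic outline of where the difficulty lies. Step~1 is a correct and useful reduction: the identity $\min_{0\le p\le n}\lambda_1(\Delta^p_{(X,g)})=\min_{1\le p\le\lceil n/2\rceil}\exactEV{1}{p}(X,g)$ follows directly from~\eqref{eq:trivial-ev-rel}, and it genuinely isolates the content of the conjecture. Step~2, however, is not yet an argument: the dichotomy ``either Cheeger-thin or uniformly fat'' is vague, and the claim that a fat manifold should contain short homologically trivial codimension-$p$ cycles producing small exact Rayleigh quotients is precisely the unproved part. You are candid about this in your ``Main obstacle'' paragraph, and that assessment is accurate --- the absence of any isoperimetric-type mechanism controlling $\exactEV{1}{p}$ for $p\ge 2$ is exactly why the paper states this as a conjecture rather than a theorem. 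Your suggestion to focus first on $n=3$, where the question reduces to bounding $\min(\exactEV{1}{1},\exactEV{1}{2})\cdot\vol_3^{2/3}$, is sensible and is the natural test case, but nothing in your proposal or in the paper settles even that case.
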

This conjecture is consistent with the case $n=2$, as the Hodge
Laplacian on a surface is entirely determined by the function
spectrum, and hence~\eqref{eq:kappa.hodge} holds as well.

Colbois and Maerten~\cite{colbois-maerten:10} have shown that on any
compact manifold $X$ of dimension $n \ge 2$ there is a sequence of
metrics $g_i$ such that $\kappa(\nabla^*\nabla^p_{(X,g_i)},X,g_i) \to
\infty$, where $\nabla^*\nabla^p_{(X,g_i)}$ denotes the rough
(Bochner) Laplacian on $p$-forms, $1\le p \le n-1$.

\paragraph{Other related works:}
There is another research line for ``collapsing'' manifolds, where one
considers families of manifolds with singular limit, but under some
curvature bounds.  Such a limit usually induces extra structure.  We
refer to~\cite{jammes:05} and \cite{lott:02} or~\cite{lott:14} for an
overview.  Let us stress that our graph-like manifolds $X_\eps$ always
have curvature tending to $\pm \infty$ as $\eps \to 0$.

Moreover, Jammes showed in~\cite{jammes:11} (see also the references
therein) that on any compact manifold of dimension $n \ge 6$, one can
prescribe the volume and any finite part of the spectrum of the Hodge
Laplacian acting on $p$-forms if $1 \le p < n/2$ and $n \ge 6$ (in the
spirit of Colin de Verdi\`ere~\cite{colin:86b,colin:87}, who treated
the function case $p=0$).  There are related results on constructing
metrics such that the Hodge Laplacian has certain spectral properties
also in~\cite{guerini:04,guerini-savo:04}.  In all these ``spectral
engineering'' papers, graph-like manifolds play a prominent role.
Chanillo and Treves showed in~\cite{chanillo-treves:97} a lower bound
on the entire Hodge Laplace spectrum in terms of certain
``admissable'' coverings of the manifold.

\subsection{Organisation of the paper}
The paper is organised as follows. In Section~\ref{sec:graph} we
briefly describe discrete and metric graphs and their associated
Laplacians on functions and on $1$-forms.  In
Section~\ref{sec:hodge-lapl} we review some facts on Laplacians on
differential forms as well as some useful facts about their
eigenvalues.  Section~\ref{sec:g-like-mfds} is dedicated to the
description of graph-like manifolds and their harmonic
forms. Section~\ref{sec:proof} contains the proof of our main result
Theorem~\ref{thm:main}, namely the convergence of exact $1$-forms and
the divergence result for higher degree forms.  Finally in
Section~\ref{sec:examples} we apply our results to establish the
existence of spectral gaps for families of metric graphs and their
graph-like manifolds.  Moreover, we construct example of (families of)
manifolds with (upper or lower) bounds on the first eigenvalue of the
Laplacian acting on functions or forms.

\subsection{Acknowlegements}
OP would like to thank Bruno Colbois for a helpful discussion and for
pointing his attention to~\cite[Lemma~2.3]{mcgowan:93} (the
``McGowan'' lemma, see Proposition~\ref{prop:mcgowan} in our paper).

\section{Discrete and metric graphs and their Laplacians}
\label{sec:graph}
%

The following material sets the notation; for more references and
details we refer e.g.\ to~\cite{post:12,berkolaiko-kuchment:13} and
the references therein.  The consideration of functions and forms on
discrete and metric graphs can also be found in~\cite{post:09c}, see
also~\cite{gaveau-okada:91}.

\subsection{Discrete graphs and their Laplacians}
\label{subsec:discrete_graph}

Let $G=(V,E,\partial)$ be a finite discrete graph, i.e., $V=V(G)$ and $E=E(G)$
are finite sets (\emph{vertices} and \emph{edges} respectively) and
$\partial \colon E \rightarrow V\times V$ is such that $e \mapsto
(\partial_-e,\partial_+e)$ associates to an edge its initial and
terminal vertex fixing an orientation for the graph, crucial when
working with $1$-forms.
\emph{We assume (without stating each time) that all discrete graphs
  are connected}.

For each vertex $v\in V$ we denote with
\begin{equation*}
  E_v^{\pm}=\set{e\in E} {\partial_{\pm}e=v}
\end{equation*}
the set of \emph{incoming} and \emph{outgoing edges} at a vertex $v$
and with
\begin{equation*}
  E_v=E_v^-\dcup E_v^+
\end{equation*}
(disjoint union) the set of vertices emanating from $v$.  The
\emph{degree of a vertex} is the number of emanating edges, i.e.,
\begin{equation*}
  \deg v:=|E_v|.
\end{equation*}
Note that we allow loops, i.e., $\partial_-e=\partial_+ e=v$, and each
loop is counted twice in $\deg v$ (as we have taken the disjoint union
in $E_v^- \dcup E_v^+$). We also allow multiple edges, i.e., edges
with the same starting and ending point.

Assume that $G=(V,E,\partial)$ is a discrete graph and $\ell \colon E
\longrightarrow (0,\infty)$ a map associating to each edge a number
$\ell_e >0$ (its ``length'', as we will interpret it below).  Given a
function $F \colon V \longrightarrow \C$ on the vertex space of $G$
(or a vector $F \in V^\C$, if you prefer), the \emph{discrete
  Laplacian} (on functions) $\Delta_G = \Delta_G^0$ is defined as
\begin{equation*}
  (\Delta_G F)(v)=
  -\frac{1}{\deg v}\sum_{e\in E_v}
  \frac{1}{\ell_e}\big(F(v_e)-F(v)\big),
\end{equation*}
where $v_e$ is the vertex on the opposite site of $v$ on $e \in E_v$.
We note that $\Delta_G$ can also be defined as $\Delta_G=d^*_G d_G$
where
\begin{equation*}
  d_G \colon \ell_2(V,\deg) \longrightarrow \ell_2(E,\ell^{-1}),\qquad 
  (d_GF)_e=F(\partial_+e)-F(\partial_-e),
\end{equation*}
and where $\ell_2(V,\deg)=\C^V$ resp.\ $\ell_2(E,\ell^{-1})=\C^E$
carry the norms given by
\begin{equation*}
  \|F\|^2_{\ell_2(V,\deg)}
  =\sum_{v\in V}|F(v)|^2\deg v
  \qquad\text{and}\qquad
  \|\eta\|^2_{\ell_2(E,\ell^{-1})}
  =\sum_{e\in E}|\eta|^2\frac{1}{\ell_e}
\end{equation*}
and where $d^*_G$ is its adjoint operator with respect to the
corresponding inner products.  We can equally define a Laplacian on
$1$-forms by $\Delta_G^1 := d_G d_G^*$, acting on
$\ell_2(E,\ell^{-1})$.

\subsection{Metric graphs and their Laplacians}
\label{subsec:metric_graph}

Let $G=(V,E,\partial)$ be a discrete graph and $\ell \colon
E\rightarrow(0,\infty)$ a function associating to each edge $e \in E$
a number $\ell_e>0$ which we will interpret as \emph{length} as
follows. We define a \emph{metric graph} associated with the discrete
graph $G$ as the quotient
\begin{equation*}
  X_0:=\bigdcup_{e \in E} I_e/{\thicksim},
\end{equation*} 
where $I_e:=[0,\ell_e]$ and $\thicksim$ is the relation identifying
the end points of the intervals $I_e$ according to the graph: namely,
$x \sim y$ if and only if $\psi(x)=\psi(y)$ where $\psi \colon
\bigdcup_{e \in E} I_e \to V$, $0 \in I_e \mapsto \partial_-e$,
$\ell_e \in I_e \mapsto \partial_+e$ and $\psi(x)=x$ for $x \in
\bigdcup_{e \in E} (0,\ell_e)$.

On a metric graph, we have a natural measure (the Lebesgue measure on
each interval) allowing us to define a natural Hilbert space of
functions and $1$-forms by
\begin{equation*}
  L^2(X_0)=\bigoplus_{e\in E}L^2(I_e)
  \quad\text{and}\quad
  L^2(\Lambda^1(X_0))=\bigoplus_{e\in E}L^2(\Lambda^1(I_e))
\end{equation*}
with norms given by
\begin{equation*}
  \|f\|^2_{L^2(X_0)}:=\sum_{e\in E}\|f_e\|^2_{L^2(I_e)}
  \quad\text{and}\quad
  \|\alpha\|^2_{L^2(\Lambda^1(X_0))}
  :=\sum_{e\in E}\|\alpha_e\|^2_{L^2(\Lambda^1(I_e))}
\end{equation*}
for functions $f \colon X_0\longrightarrow \C$, $f=(f_e)_{e\in E}$ and
$1$-forms $\alpha=(\alpha_e)_{e\in E} =(g_e\, \dd s_e)_{e\in E}$ on $X_0$,
respectively. Note that functions on $I_e$ can obiouvsly be identified
with $1$-forms via $g_e \mapsto g_e \dd s_e$; the difference of forms and
functions appears only in the domain of the corresponding differential
operators below.

We define the \emph{exterior derivative} $d=d_{X_0}$ on $X_0$ as the
operator
\begin{equation*}
  d \colon \dom d \longrightarrow L^2(\Lambda^1(X_0)),
  \quad d(f_e)_{e\in E}=(f'_e \dd s_e)_{e\in E}
\end{equation*}  
with domain
\begin{equation*}
  \dom d = H^1(X_0) \cap C(X_0)
\end{equation*}
where $H^1(X_0)=\set{f\in L^2(X_0)}{f'=(f_e')_{e\in E} \in L^2(X_0)}$
and where $C(X_0)$ denotes the space of continuous functions on $X_0$.

It is not difficult to see that $d=d_{X_0}$ is a \emph{closed}
operator with adjoint given by
\begin{equation*}
  d^* (g_e \dd s_e)_{e \in E}  = -(g'_e )_{e\in E}
\end{equation*}
with domain
\begin{equation*}
  \dom d^* =
  \Bigset{\alpha \in H^1(\Lambda^1(X_0))}
         {\sum_{e\in E} \orient{\alpha}_e(v) = 0}
\end{equation*}
with $H^1(\Lambda^1(X_0))=\set{\alpha =(g_e \dd s_e)_{e \in E} \in
  L^2(\Lambda^1(X_0))}{g_e' \in L^2(I_e)}$ where $\orient{\alpha}$ is the oriented evaluation of
$\alpha$, i.e.,
\begin{equation*}
  \orient{\alpha}_e(v) =
  \begin{cases}
    -g_e(0), & v=\partial_-e\\
    g_e(\ell_e),& v=\partial_+e.
  \end{cases}
\end{equation*}

The Laplacians acting on functions and $1$-forms defined on $X_0$ are
the operators
\begin{align*}
  \Delta^0_{X_0}&=d^*d
  \quad\text{where}\quad
  \dom\Delta^0_{X_0}
  =\set{\alpha \in \dom d} {d \alpha \in \dom d^*} 
  \qquad\text{and}\\
  \Delta^1_{X_0}&=d d^*
  \quad\text{where}\quad
  \dom \Delta^1_{X_0}
  =\set{\alpha \in \dom d^*} {d^*\alpha \in \dom d}.
\end{align*}
Writing the vertex conditions for the Laplacian on functions
explicitly, we obtain the conditions
\begin{equation}
  \label{eq:kirchhoff}
  f_e(v):=
  \begin{cases}
    f_e(0),&v=\partial_-e\\
    f_e(\ell_e),&v=\partial_-e
  \end{cases}
  \quad  \text{is independent of $e \in E_v$ and} \quad
  \sum_{e \in E_v} \orient f'_e(v)=0,
\end{equation}
called \emph{standard} or \emph{Kirchhoff} vertex conditions.  The
first condition can be rephrased as \emph{continuity} of $f$ on the
metric graph, while the second is a \emph{flux conservation}
considering the derivative $df=(f_e')_e$ as vector field.
\begin{rmk}
  \label{rem:supersymmetry}
  We remark that $\Delta_{X_0}^0=d_{X_0}^*d_{X_0}$ and
  $\Delta_{X_0}^1=d_{X_0}d_{X_0}^*$ are both non-negative operators
  and fulfil a ``supersymmetry'' condition in the sense
  of~\cite[Sec.~1.2]{post:09c}.  As a consequence, their non-zero
  spectrum including multiplicity are the same
  (\cite[Prop.~1.2]{post:09c}, see also the proof of
  Theorem~\ref{thm:exact}).  This remark applies also to the discrete
  graph Laplacians $\Delta_G^0=d_G^*d_G$ and $\Delta_G^1=d_G d_G^*$ of
  Section~\ref{subsec:metric_graph}, as well as to the co-exact and
  exact Laplacian $\coexactLapl \Xe {p-1}=d_{\Xe}^*d_{\Xe}$ and
  $\exactLapl \Xe p=d_{\Xe}d_{\Xe}^*$, respectively, explaining the
  second relation in~\eqref{eq:trivial-ev-rel}.
\end{rmk}

Finally, we remind the reader that whenever the graph is equilateral,
i.e., $\ell_e=\ell_0$ for all $e\in E$, the spectra of the discrete
Laplacian and the metric Laplacian on $0$-forms are related in the
following sense. Let $\Sigma := \{ (j\pi/\ell_0)^2 \mid j=1,2,\dots\}$
be the Dirichlet spectrum of the interval $[0,\ell_0]$, then
\begin{equation}
  \label{spectra_rel}
  \lambda \in \sigma(\Delta^0_{X_0})
  \qquad\text{if and only if}\qquad
  \phi(\lambda):= 1-\cos(\ell_0\sqrt{\lambda}) \in \sigma(\Delta_G)
\end{equation}
for all $\lambda \notin \Sigma$ (see e.g.~\cite{nicaise:85,cattaneo:97} or
\cite[Sec.~2.4.1]{post:12}), and we have the obvious relation also
for $1$-forms due to Remark~\ref{rem:supersymmetry}.  There is also a
relation at the bottom of the spectrum of $\Delta_G$ and
$\Delta_{X_0}$ for \emph{general} (not necessarily equilateral) metric
graphs for which we refer to \cite[Sec.~6.1]{post:09c} or
\cite[Sec.~2.4.2]{post:12} for more details.

\subsection{Discrete and metric Ramanujan graphs}
\label{subsec:ramanujan}

A discrete graph $G$ is $k$-regular, if all its vertices have degree
$k$.  For ease of notation we assume here that the graph
$G=(V,E,\partial)$ is \emph{simple}, and we write $v \sim w$ for
adjacent vertices.
\begin{defin}
  Let $G$ be a $k$-regular discrete graph with $n$ vertices and let
  $\Delta_G$ be its (normalised) discrete Laplacian.  The graph $G$ is
  said to be \emph{Ramanujan} if
  \begin{equation*}
    \max\bigset{|1-\mu|}{\mu \in \sigma(\Delta_G)}
    \leq \frac{2\sqrt{k-1}}{k}.
  \end{equation*}
\end{defin}
Note that many authors use the eigenvalues of the \emph{adjacency
  matrix} $A_G$ as the spectrum of a graph.  The adjacency matrix is
given by $(A_G)_{v,w}=1$ if $v \sim w$ and $(A_G)_{v,w}=0$. As $v \sim
w$ is equivalent with $w \sim v$, the adjacency matrix if symmetric.
For a $k$-regular graph, we have the relation
\begin{equation}
  \label{eq:disc.lap.adj}
  A_G = k(\id - \Delta_G),
  \qquad\text{or,}\qquad
  \Delta_G = \id - \frac 1 k A_G
\end{equation}
with the discrete graph Laplacian (with ``length'' function
$\ell_e=1$, see Section~\ref{subsec:discrete_graph}).  Note that
$\spec{A_G} \subset [-k,k]$ and $\spec{\Delta_G}\subset [0,2]$, and
that $k$, resp.\ $0$, is always an eigenvalue of $A_G$, resp.\ $\Delta_G$,
while $-k$, resp.\ $2$, is an eigenvalue of $A_G$, resp.\ $\Delta_G$, if
and only if the graph is bipartite (recall that we assume that $G$ is
a \emph{finite} graph).

We define the \emph{(maximal) spectral gap length} of a discrete graph
by
\begin{equation}
  \label{eq:graph.gap}
  \gamma(G) 
  := \min \bigset{ \mu, 2-\mu}{\mu \in \spec {\Delta_G} \setminus\{0,2\}}
  = 1-\frac 1k\max \bigset{ |\alpha| } {\alpha \in \spec {A_G}, |\alpha|<k}
\end{equation}
i.e., $\gamma(G)$ is the distance from the non-trivial spectrum $0$
(resp.\ $0$ and $2$ in the bipartite case) of the Laplacian $\Delta_G$
from $\{0,2\}$ resp.\ $\{-k,k\}$.  Hence, a graph is a Ramanujan graph
if its spectral gap length has size at least
\begin{equation*}
  \gamma(G)   \geq 1-\frac{2\sqrt{k-1}}{k}.
\end{equation*}
It has been shown that the lower bound is optimal, i.e., for any
$k$-regular graph (or even for any graph with maximal degree $k$) with
diameter large enough, the spectral gap length is smaller than
$1-2\sqrt{k-1}/k+\eta$, where $1/\eta$ is of the same order as the
diameter (see~\cite[Thm.~1]{nilli:91} and references therein).

The existence of \emph{infinite} families $\{G^i\}_{i \in \N}$ of
$k$-regular graphs has been shown whenever $k$ is a prime or a power
of a prime (see e.g.~\cite{lps:88, margulis:88,
  morgenstern:94}). Recently, the existence of infinite families of
regular bipartite Ramanujan graphs of every degree $k> 2$ has been
proved in~\cite{mss:pre14} by showing that any bipartite Ramanujan
graph has a $2$-lift which is again Ramanujan, bipartite and has twice
as many vertices.

Let $\{G^i\}_{i \in \N}$ be a family of Ramanujan graphs such that
\begin{equation}
  \label{eq:vx.ram}
  \nu_i := |V(G^i)| \to \infty
\end{equation}
and consider the associated family of equilateral metric graphs
$\{X_0^i\}_{i \in \N}$ of length $\ell_0$.  By~\eqref{spectra_rel},
the metric graph Laplacians $\Delta_{X_0^i}$ all have a spectral gap
\begin{equation}
  \label{eq:ramanujan-gap}
  (a_0,b_0) = \Bigl(0, \frac h {\ell_0^2} \Bigr)
  \qquad\text{with}\qquad
  h=h_k :=  \arccos^2 
  \Bigl(1-\frac{2\sqrt{k-1}} k  \Bigr)>0
\end{equation}
at the bottom of the spectrum.

\section{The Hodge Laplacian and their eigenvalues}
\label{sec:hodge-lapl}
%

In this section, we collect some general facts on differential forms,
the Hodge Laplacian and its spectrum.

\subsection{Differential forms and the Hodge Laplacian}
\label{sec:diff-forms}

Let $(M,g)$ be a compact, oriented and connected $n$-dimensional
Riemannian manifold.  Its Riemannian metric $g$ induces the
$L^2$-space of $p$-forms
\begin{equation*}
  L^2(\Lambda^p(M,g))
  =\Bigset{\omega \colon M \longrightarrow\C}
          { \|\omega\|^2_{L^2(\Lambda^p(M,g)} = \int_M|\omega|_g^2\dvol_g M<\infty}
\end{equation*}
where
\begin{equation*}
  \|\omega\|^2_{L^2(\Lambda^p(M,g))}
  =\langle \omega,\omega \rangle_{L^2(\Lambda^p(M,g))}
  := \int_M|\omega|_g^2 \dvol_g M
  =\int_M \omega\wedge\ast \omega
\end{equation*}
and where $\ast$ denotes the Hodge star operator (depending on $g$).
The Laplacian on $p$-forms on $M$ is formally defined as
$\Delta_{(M,g)}^p=\Delta^p=d\delta + d \delta$ where $d$ is the
classical exterior derivative and $\delta=(-1)^{np+n+1}{\ast} d \ast$
its formal adjoint with respect to the inner product induced by $g$.
If $M$ has no boundary, then $\delta$ is the $L^2$-adjoint of $d$ and
$\Delta^p$ is a non-negative self-adjoint operator with discrete
spectrum denoted by $\lambda_j^p(M,g)$ (repeated according to
multiplicity).

We allow that $M$ has a boundary $\partial M$, itself a smooth
manifold of dimension $n-1$.  As in the function case, it is possible
to impose boundary conditions for functions in the domain of the Hodge
Laplacian.  To do so, we first decompose a $p$-form $\omega$ in its
tangential and normal components on $\partial M$, i.e.,
$\omega=\omega_\tangBC+\omega_\normBC$ where $\omega_\tangBC$ can be
considered as a form on $\partial M$ while $\omega_\normBC=\dd r
\wedge \omega^\bot$ with $\omega^\bot$ being a form on $\partial M$
and $r$ being the distance from $\partial M$.

The Hodge Laplacian with \emph{absolute boundary conditions} is given
by those forms $\omega$ such that
\begin{equation*}
  \omega_\normBC=0
  \quad\text{and}\quad 
  (d\omega)_\normBC=0
\end{equation*}
while \emph{relative boundary conditions} require
\begin{equation*}
  \omega_\tangBC=0
  \quad\text{and}\quad 
  (\delta \omega)_\tangBC=0.
\end{equation*} 
These boundary conditions give rise to two unbounded and self-adjoint
operators $\Delta^\absBC$ and $\Delta^\relBC$ with discrete spectrum,
the \emph{Hodge Laplacians} with \emph{absolute} and \emph{relative
  boundary conditions}, respectively (see e.g.~\cite{chavel:84}
or~\cite{mcgowan:93}).  Recall that for functions, the absolute
correspond to Neumann while the relative correspond to Dirichlet
boundary conditions.

Furthermore, since the Hodge star operator exchanges absolute and
relative boundary conditions, there is a correspondence between the
spectrum of $\Delta^\absBC$ and the spectrum of $\Delta^\relBC$, which
allows us to study just one of them to cover both cases.  \emph{In the
  sequel, we will only consider \emph{absolute} boundary conditions if
  the manifold has a boundary, and hence we will mostly suppress the
  label $(\cdot)^\absBC$ for ease of notation.}

In an $L^2$-framework, we consider $d$ and $\delta_0$ as unbounded
operators, defined as the closures $\overline d$ and $\overline
\delta_0$ of $d$ and $\delta_0$ on
\begin{align*}
  \dom d 
  &= \bigset{\omega \in C^\infty(\Lambda^p(M,g))}
         {d \omega \in L^2(\Lambda^{p+1}(M,g))}\\
  \dom \delta_0 
  &= \bigset{\omega \in C^\infty(\Lambda^p(M,g))}
         {\delta \omega \in L^2(\Lambda^{p+1}(M,g)),\;
             \omega_\normBC=0},
\end{align*}
respectively.  The Hodge Laplacian with absolute boundary condition is
then given by
\begin{equation*}
  \Delta
  =\Delta^\absBC
  =\overline d \,\overline \delta_0 + \overline \delta_0 \overline d
\end{equation*}
For this operator, Hodge Theory is still valid.  In particular, the
de~Rham theorem holds (see~\cite[Sec.~2.1]{mcgowan:93} and references
therein): the space of harmonic $p$-forms (with absolute boundary
conditions if the boundary is non-empty) $\mathcal{H}^p(M,g)$, is
isomorphic to the $p$-th de Rham cohomology, $H^p(M)$, and any
$p$-form $\omega \in L^2(\Lambda^p(M,g))$ can be orthogonally
decomposed into an \emph{exact} ($d\exact \omega$), \emph{co-exact}
($\delta \coexact \omega$) and \emph{harmonic} ($\omega_0$) component,
i.e.,
\begin{equation}
  \label{eq:hodge}
  \omega =d\exact \omega  + \delta \coexact \omega  + \omega_0
\end{equation}
where $\exact \omega \in \dom \overline d$ is a $(p-1)$-form,
$\coexact \omega \in \dom \overline \delta_0$ is a $(p+1)$-form and
$\omega_0$ is a harmonic $p$-form.  Moreover, the Hodge Laplacian
leaves these spaces invariant and maps $p$-forms into $p$-forms.  In
particular, we can consider the eigenvalues of the Hodge Laplacian
acting on exact and co-exact $p$-forms as $\overline d \, \overline
\delta_0$ and $\overline \delta_0 \overline d$, called here
\emph{exact} and \emph{co-exact (absolute) $p$-form eigenvalues},
denoted by
\begin{equation*}
  \exactEV j p (M,g) 
  \quad\text{and}\quad
  \coexactEV j p (M,g),
\end{equation*}
respectively.  Let $\exact E^p(\lambda)=\ker(d\, \delta_0-\lambda)$
and $\coexact E^p(\lambda)=\ker(\delta_0 d -\lambda)$ denote the
eigenspaces of exact and co-exact $p$-forms with eigenvalue $\lambda$
(as the eigenforms are smooth by elliptic regularity, we can omit the
closures).  Since $d$ is an isomorphism between $\coexact
E^{p-1}(\lambda)$ and $\exact E^p(\lambda)$, we have the second
equality of~\eqref{eq:trivial-ev-rel}.  For the first equality, we use
the Hodge star operator (it interchanges absolute and relative
boundary conditions, but in~\eqref{eq:trivial-ev-rel} we only consider
boundaryless manifolds).

\subsection{An estimate from below for exact eigenvalues}
\label{subsec:estimate}

We now introduce a simplified but useful version of an estimate from
below on the first eigenvalue of the exact $p$-form Laplacian on a
manifold by McGowan (\cite[Lemma~2.3]{mcgowan:93}) also used in
Gentile and Pagliara in~\cite[Lemma~1]{gentile-pagliara:95}.

Let $(M,g)$ be a $n$-dimensional compact Riemannian manifold without
boundary and let $\{U_i\}_{i=1}^m$ be an open cover of $M$ such that
$U_{ij}=U_i\cap U_j$ have a smooth boundary.  
Moreover, we  denote by
\begin{equation*}
  I_i := \set{j \in \{1,\dots, i-1,i+1,\dots,m\}} 
  {U_i\cap U_j\neq \emptyset}.
\end{equation*}
the index set of \emph{neighbours of $U_i$}.  We say that the cover
$\{U_i\}_i$ has \emph{no intersection of degree $r$} iff $U_{i_1}\cap
\dots \cap U_{i_r}=\emptyset$ for any $r$-tuple $(i_1,\dots, i_r)$
with $1 \le i_1<i_2<\dots<i_r \le m$.  We choose a fixed partition of
unity $\{\rho_j\}_{j=1}^m$ subordinate to the open cover and we set
$\|d \rho\|_\infty := \max_i\,\sup_{x\in U_i}|d \rho_i(x)|_g$.

Furthermore, we denote by $\exactEVabsBC 1 p(U)$ the first positive
eigenvalue on exact $p$-forms on $U$ satisfying absolute boundary
conditions on $\partial U$.  Finally, denote by $H^p(U_{ij})$ the
$p$-th cohomology group of $U_{ij}$.

\begin{prop}
  \label{prop:gentile-pagliara:95}
  Let $M$ and $\{U_i\}_{i=1}^m$ be as above. Assume that the open
  cover has no intersection of degree higher than $2$ and
  $H^{p-1}(U_{ij})=0$ for all $i,j$. Then, the first positive
  eigenvalue of the Laplacian acting on exact $p$-forms on $M$
  satisfies
  \begin{equation*}
    \exactEV 1 p (M)
    \geq \dfrac{2^{-3}}
     {\displaystyle \sum_{i=1}^m   
       \Bigg(
         \dfrac 1 {\exactEVabsBC 1 p(U_i)}
         +\sum_{j \in I_i}
         \bigg(\dfrac{c_{n,p}\|d \rho\|^2_\infty}
                   {\exactEVabsBC 1 {p-1}(U_{ij})}+1
         \bigg)
         \bigg(\dfrac 1 {\exactEVabsBC 1 p(U_i)}
             +\dfrac 1 {\exactEVabsBC 1 p(U_j)}
        \bigg)
      \Bigg)
    }
  \end{equation*} 
  where $c_{n,p}$ is a combinatorial constant depending only on $p$
  and $n$.
\end{prop}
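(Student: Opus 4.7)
The plan is to reduce the lower bound on $\exactEV{1}{p}(M)$ to a primitive-construction problem. By the variational characterization of $\coexactEV{1}{p-1}(M) = \exactEV{1}{p}(M)$, if one can show that every exact $p$-form $\omega$ on $M$ admits \emph{some} primitive $\alpha$ with $d\alpha = \omega$ and $\|\alpha\|_M^2 \leq 2^3 D\,\|\omega\|_M^2$, where $D$ denotes the sum appearing in the statement, then by minimality of the co-exact primitive one obtains $\exactEV{1}{p}(M) \geq 2^{-3}/D$, which is the claim.

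First I would produce local primitives. Since $\omega$ is exact on $M$, its restriction to each $U_i$ is exact, and the Poincar\'e-type inequality associated with $\exactEVabsBC{1}{p}(U_i)$ yields a $(p-1)$-form $\alpha_i$ on $U_i$ (satisfying absolute boundary conditions on $\partial U_i$) with $d\alpha_i = \omega|_{U_i}$ and
$$\|\alpha_i\|_{U_i}^2 \;\leq\; \frac{\|\omega\|_{U_i}^2}{\exactEVabsBC{1}{p}(U_i)}.$$
On each overlap $U_{ij}$ the difference $\alpha_i - \alpha_j$ is closed; the hypothesis $H^{p-1}(U_{ij})=0$ promotes it to an exact form, so a second Poincar\'e inequality (for exact $(p-1)$-forms on $U_{ij}$ with absolute boundary conditions) delivers $\beta_{ij}$ with $d\beta_{ij} = \alpha_i - \alpha_j$ and
$$\|\beta_{ij}\|_{U_{ij}}^2 \;\leq\; \frac{\|\alpha_i - \alpha_j\|_{U_{ij}}^2}{\exactEVabsBC{1}{p-1}(U_{ij})}.$$

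Next I would glue these local primitives into a global one via a \v{C}ech--de~Rham construction, setting $\alpha := \sum_i \rho_i \alpha_i + (\text{correction built from } d\rho_i \wedge \beta_{ij})$, with the correction chosen precisely to cancel the error term $\sum_i d\rho_i \wedge \alpha_i$ that appears in $d\bigl(\sum_i \rho_i \alpha_i\bigr) = \omega + \sum_i d\rho_i \wedge \alpha_i$. The no-triple-intersection hypothesis makes this cancellation transparent: at any point only two indices $i,j$ contribute, and the identity $d\rho_i + d\rho_j = 0$ (coming from $\sum_k \rho_k \equiv 1$) rewrites $\sum_k d\rho_k \wedge \alpha_k$ as $d\rho_i \wedge (\alpha_i - \alpha_j) = d\rho_i \wedge d\beta_{ij}$, which the correction term exactly absorbs, yielding $d\alpha = \omega$ globally.

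Finally I would estimate $\|\alpha\|_M^2$. Three successive applications of $\|A+B\|^2 \leq 2\|A\|^2 + 2\|B\|^2$ (once for the split of $\alpha$ into its two parts, and twice more inside each part to unfold the sums while exploiting the two-fold multiplicity of the cover) produce the overall factor $2^3 = 8$. Inserting the two Poincar\'e inequalities above, bounding $\|\alpha_i - \alpha_j\|_{U_{ij}}^2 \leq 2(\|\alpha_i\|_{U_i}^2 + \|\alpha_j\|_{U_j}^2)$, and controlling the wedge by $|d\rho_i \wedge \beta_{ij}|^2 \leq c_{n,p}\|d\rho\|_\infty^2 |\beta_{ij}|^2$ for a combinatorial constant $c_{n,p}$, assembles the bound $\|\alpha\|_M^2 \leq 2^3 D \|\omega\|_M^2$ with $D$ matching the statement exactly. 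The chief obstacle is the \v{C}ech--de~Rham bookkeeping in the gluing step: one must exhibit an explicit correction that simultaneously makes $\alpha$ a primitive and admits a clean $L^2$ estimate, and it is precisely there that both hypotheses (no triple intersections and $H^{p-1}(U_{ij}) = 0$) enter inseparably.
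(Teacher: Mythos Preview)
The paper does not supply its own proof of this proposition; it quotes the result directly from McGowan's technical lemma \cite[Lemma~2.3]{mcgowan:93} (and its use in \cite[Lemma~1]{gentile-pagliara:95}), and only remarks after Proposition~\ref{prop:mcgowan} that the argument is McGowan's. Your sketch is exactly that argument: take the variational characterisation of Proposition~\ref{prop:abs_exact}, build local primitives $\alpha_i$ with the local Poincar\'e bound, solve $d\beta_{ij}=\alpha_i-\alpha_j$ on overlaps using $H^{p-1}(U_{ij})=0$, glue via the partition of unity with a \v{C}ech--de~Rham correction (where the no-triple-intersection hypothesis kills higher cochains), and collect the $L^2$ estimates. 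So your proposal is correct and coincides with the approach the paper defers to.
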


We remark that these assumptions impose a topological restriction on
the manifold as such an open cover does not necessarily exist.
Actually, the following general version holds for \emph{higher} exact
eigenvalues:
\begin{prop}
\label{prop:mcgowan}
Let $M$ and $\{U_i\}_i$ be as above and assume that the open cover has
no intersection of degree higher than $2$. We set $N_1=\sum_{i,j}\dim
H^{p-1}(U_{i,j})$ and $N=N_1+1$. Then, the $N$-th eigenvalue of the
Laplacian on exact $p$-forms on $M$ satisfies
  \begin{equation*}
    \exactEV N p (M)
    \geq \dfrac 1
     {\displaystyle \sum_{i=1}^m   
       \Bigg(
         \dfrac 1 {\exactEVabsBC 1 p(U_i)}
         +\sum_{j \in I_i}
         \bigg(\dfrac{\|d \rho\|^p_\infty}
                   {\exactEVabsBC 1 {p-1}(U_{ij})}+1
         \bigg)
         \bigg(\dfrac 1 {\exactEVabsBC 1 p(U_i)}
             +\dfrac 1 {\exactEVabsBC 1 p(U_j)}
        \bigg)
      \Bigg)
    }
  \end{equation*} 
\end{prop}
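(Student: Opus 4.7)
The plan is a min-max reduction combined with a Mayer--Vietoris style gluing of local primitives. Using Hodge duality $\exactEV j p(M)=\coexactEV j {p-1}(M)$ together with the Rayleigh quotient $\|d\alpha\|^2/\|\alpha\|^2$ on co-exact $(p-1)$-forms, the bound $\exactEV N p(M)\geq 1/C$ is equivalent, via the standard min-max principle, to: every $N$-dimensional subspace $W$ of exact $p$-forms on $M$ contains some $\omega\neq 0$ admitting a primitive $\tilde\alpha$ with $d\tilde\alpha=\omega$ and $\|\tilde\alpha\|^2\leq C\|\omega\|^2$ (we may replace $\tilde\alpha$ by its co-exact component, which has no larger $L^2$-norm). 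Thus the task is, for any such $W$, to single out one $\omega\in W$ and construct a quantitatively controlled global primitive.

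For the local step, on each $U_i$ let $\alpha_i$ be the unique co-exact $(p-1)$-form with absolute boundary conditions such that $d\alpha_i=\omega|_{U_i}$; the variational characterisation yields
\begin{equation*}
  \|\alpha_i\|^2_{L^2(U_i)}\leq \frac{\|\omega\|^2_{L^2(U_i)}}{\exactEVabsBC 1 p(U_i)}.
\end{equation*}
On each overlap $U_{ij}$, the closed form $\alpha_i-\alpha_j$ represents a class in $H^{p-1}(U_{ij})$, so we obtain a linear \emph{obstruction map}
\begin{equation*}
  \Phi\colon W \longrightarrow \bigoplus_{i<j} H^{p-1}(U_{ij}), \qquad \omega\mapsto \bigl([\alpha_i-\alpha_j]\bigr)_{i<j},
\end{equation*}
whose target has dimension $N_1$. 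Since $\dim W=N_1+1$, the kernel of $\Phi$ is nontrivial; pick $\omega\in\ker\Phi\setminus\{0\}$. Then on every $U_{ij}$ the difference $\alpha_i-\alpha_j$ is exact, and we choose the canonical co-exact primitive $\beta_{ij}$ with $d\beta_{ij}=\alpha_i-\alpha_j$, satisfying
\begin{equation*}
  \|\beta_{ij}\|^2_{L^2(U_{ij})}\leq \frac{2(\|\alpha_i\|^2+\|\alpha_j\|^2)}{\exactEVabsBC 1 {p-1}(U_{ij})}.
\end{equation*}

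It remains to glue. Take $\alpha:=\sum_i\rho_i\alpha_i$; then $d\alpha=\omega+\eta$ with error $\eta=\sum_i d\rho_i\wedge\alpha_i$. Using $\sum_i d\rho_i\equiv 0$, we rewrite $\eta$ locally in terms of the pairwise differences $\alpha_i-\alpha_j=d\beta_{ij}$; the no-triple-intersection hypothesis keeps the combinatorics strictly pairwise and allows the construction of a globally defined $(p-1)$-form $\sigma$, assembled from the $\rho_k$ and $\beta_{ij}$, with $d\sigma=\eta$. Then $\tilde\alpha:=\alpha-\sigma$ is a genuine global primitive of $\omega$. The main obstacle, and the most intricate part of the argument, is the final norm bookkeeping: splitting $\|\tilde\alpha\|^2\leq 2(\|\alpha\|^2+\|\sigma\|^2)$, estimating $\|\alpha\|^2\leq \sum_i\|\alpha_i\|^2$ and $\|\sigma\|^2\leq \sum_{i<j}c_{n,p}\|d\rho\|^p_\infty\|\beta_{ij}\|^2$ (the exponent $p$ entering through the pointwise wedging with $d\rho$ and the dimensional constants for the $(p-1)$-form $\beta_{ij}$), inserting the local estimates on $\|\alpha_i\|$ and $\|\beta_{ij}\|$, and applying Cauchy--Schwarz over the finite index set. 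One must also check that the constructed $\sigma$ respects the absolute boundary conditions so that $\tilde\alpha$ is admissible as a primitive. Gathering the terms by which $U_i$ and which pair $(i,j)$ they come from recovers exactly the denominator in the statement, and hence the bound $\exactEV N p(M)\geq 1/C$.
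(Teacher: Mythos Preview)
Your outline is correct and is exactly the McGowan argument that the paper invokes by reference rather than reproducing; in fact you give considerably more detail than the paper, which simply points to~\cite[Lemma~2.3]{mcgowan:93} and notes that the no-triple-intersection hypothesis makes the extension to general $p$ routine.

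Two small points. First, $M$ is closed in this setting, so there is nothing to check about absolute boundary conditions on the global primitive $\tilde\alpha$; the absolute conditions enter only for the local pieces $\alpha_i$ and $\beta_{ij}$ on the $U_i$ and $U_{ij}$. Second, your attempted justification of the exponent $p$ in $\|d\rho\|_\infty^p$ does not really hold up---wedging with the $1$-form $d\rho$ contributes a single factor of $\|d\rho\|_\infty$, not $p$ of them---and comparing with the companion Proposition (where the coefficient is $c_{n,p}\|d\rho\|_\infty^2$) suggests this is a typographical inconsistency in the statement rather than something to derive. Your dimension count over unordered pairs $i<j$ actually gives a slightly sharper $N$ than the paper's (which, as seen in the application in Theorem~\ref{prop2}, double-counts the overlaps); this only strengthens the conclusion.
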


The proof of this proposition uses the same argument of the proof of
McGowan's lemma. The generalisation to $p$-forms is trivial since we
have particular assumptions on the cover, i.e., no intersections of
degree higher than $2$ (see the remark after the proof of his
``technical lemma''~\cite[Lemma~2.3]{mcgowan:93}).

\subsection{A variational characterisation of exact eigenvalues}
\label{subsec:characterization}

We will make use of the following characterisation of eigenvalues of
the Hodge Laplacian acting on exact $p$-forms by
Dodziuk~\cite[Prop.~3.1]{dodziuk:82} whose proof can be found
in~\cite[Prop.~2.1]{mcgowan:93}.  Its advantage is that it does not make
use of the adjoint $\delta$ of the exterior derivative, and hence the
metric $g$ does not enter in a complicated way.
\begin{prop}
  \label{prop:abs_exact}
  Let $M$ be a compact Riemannian manifold, then the spectrum of the
  Laplacian $0<\exactEV 1 p \le \exactEV 2 p \le \ldots$
  on exact $p$-forms on $M$ satisfying absolute boundary conditions
  can be computed by
  \begin{equation*}
    \exactEV j p (M)
    =\inf_{V_j} \sup
     \Bigset{ \frac {\langle \eta,\eta \rangle_{L^2(\Lambda^p(M))}}
                  {\langle \theta,\theta \rangle_{L^2(\Lambda^{p-1}(M))}}}
             {\eta\in V_j\setminus\{ 0\} \text{ such that }
                  \eta=d\theta},
  \end{equation*}
  where $V_j$ ranges over all $j$-dimensional subspaces of smooth
  exact $p$-forms and $\theta$ is a smooth $(p-1)$-form. 
\end{prop}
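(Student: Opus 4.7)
The plan is to reduce the claim to the standard Courant--Fischer min--max via the supersymmetry between $\overline \delta_0 \overline d$ on co-exact $(p-1)$-forms and $\overline d \overline \delta_0$ on exact $p$-forms (Remark~\ref{rem:supersymmetry}), and then to translate the resulting formula back in terms of exact $p$-forms using the Hodge decomposition. Concretely, the first step is to note that $\overline d$ restricts to an isomorphism from the closure of smooth co-exact $(p-1)$-forms onto the closure of smooth exact $p$-forms intertwining the two operators, so $\exactEV j p(M) = \coexactEV j {p-1}(M)$ for all $j$. It therefore suffices to prove the formula with $\exactEV j p$ replaced by $\coexactEV j {p-1}$, with the ratio expressed through a $(p-1)$-form primitive.

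The second step is to apply Courant--Fischer directly to $\overline \delta_0 \overline d$ restricted to its invariant subspace of co-exact $(p-1)$-forms. Its quadratic form is $\theta \mapsto \|d\theta\|^2$ on $\dom \overline d \cap \{\theta \text{ co-exact}\}$ (no boundary condition is imposed on $\theta$ directly; absolute boundary conditions enter only through $\overline \delta_0$ and are automatic on the co-exact component). The standard min--max then gives
\begin{equation*}
\coexactEV j {p-1}(M)
= \inf_{W_j} \sup_{\theta \in W_j \setminus \{0\}}
\frac{\|d\theta\|^2_{L^2(\Lambda^p(M))}}{\|\theta\|^2_{L^2(\Lambda^{p-1}(M))}},
\end{equation*}
where $W_j$ ranges over $j$-dimensional subspaces of smooth co-exact $(p-1)$-forms; density of smooth co-exact forms in the form domain follows from elliptic regularity of the Hodge Laplacian and smoothness of the Hodge projection onto the co-exact subspace on a compact manifold.

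The third step is to translate via $d$, which restricts to a linear bijection $W_j \mapsto V_j := d W_j$ between $j$-dimensional subspaces of smooth co-exact $(p-1)$-forms and $j$-dimensional subspaces of smooth exact $p$-forms (injectivity uses $\ker d \cap (\ker d)^{\perp} = \{0\}$ on co-exact forms). For any smooth primitive $\theta$ of $\eta \in V_j$, the Hodge decomposition writes $\theta = \theta_0 + \kappa$ with $\theta_0$ co-exact and $\kappa \in \ker d$; then $d\theta_0 = \eta$ and $\|\theta\|^2 = \|\theta_0\|^2 + \|\kappa\|^2 \ge \|\theta_0\|^2$, so for fixed $\eta$ the ratio $\|\eta\|^2 / \|\theta\|^2$ is maximised over primitives at $\theta = \theta_0$. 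Substituting this into the formula of the previous paragraph gives the stated characterisation, provided the supremum in the statement is understood to be taken jointly over $\eta \in V_j$ and all its primitives $\theta$. This interpretation issue is the main delicate point of the proof: the ratio $\|\eta\|^2 / \|\theta\|^2$ depends on the choice of primitive $\theta$ (which is unique only up to $\ker d$), and the Hodge decomposition is what lets one identify the supremum over primitives with the unambiguous quantity $\|\eta\|^2/\|\theta_0\|^2$; the payoff is that the final formula involves only $d$ and $L^2$-norms, never $\delta$, which is what makes it convenient for upper estimates on exact eigenvalues.
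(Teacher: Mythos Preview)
The paper does not include its own proof of this proposition; it merely states the result and refers to Dodziuk~\cite[Prop.~3.1]{dodziuk:82} and McGowan~\cite[Prop.~2.1]{mcgowan:93} for the argument. Your proof is correct and follows the standard route taken in those references: identify the exact $p$-form spectrum with the co-exact $(p-1)$-form spectrum via supersymmetry, apply the Courant--Fischer min--max to the quadratic form $\theta \mapsto \|d\theta\|^2$ on co-exact $(p-1)$-forms, and then use the Hodge decomposition to see that among all primitives $\theta$ of a given $\eta$ the co-exact one has minimal norm, so the supremum over all pairs $(\eta,\theta)$ with $d\theta=\eta$ coincides with the supremum over $\eta$ alone with the co-exact primitive inserted. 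Your explicit discussion of the interpretation of the supremum (as a joint sup over $\eta$ and its primitives) is precisely the point McGowan makes in his proof, and is what makes the formula well defined.
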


The advantage of this characterisation is that the metric only enters
via the $L^2$-norm, and no derivatives of the metric or its
coefficients are needed.

As a consequence we have (see~\cite[Prop.~3.3]{dodziuk:82}
or~\cite[Lem.~2.2]{mcgowan:93}):
\begin{prop}
  \label{prop:cont.dep}
  Assume that $g$ and $\wt g$ are two Riemannian metrics on $M$ such
  that $c_-^2g \le \wt g \le c_+^2 g$ for some constants $0<c_-\le c_+ <
  \infty$, i.e.,
  \begin{equation*}
    c_-^2g_x(\xi,\xi) \le \wt g_x(\xi,\xi) \le c_+^2 g_x(\xi,\xi)
    \quad\text{for all $\xi \in T_x^*M$ and $x \in M$},
  \end{equation*}
  then the eigenvalues of exact $p$-forms with absolute boundary
  conditions fulfil
  \begin{equation*}
    \frac 1{c_-^2} \Bigl(\frac {c_-}{c_+}\Bigr)^{n+2p} \exactEV j p(M,g)
    \le \exactEV j p(M, \wt g)
    \le \frac 1{c_+^2} \Bigl(\frac {c_+}{c_-}\Bigr)^{n+2p} \exactEV j p(M,g)
  \end{equation*}
  for all $j=1,2,\dots$
\end{prop}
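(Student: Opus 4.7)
The plan is to invoke the variational characterisation of Proposition~\ref{prop:abs_exact}, because its main virtue is precisely that the codifferential $\delta$ (and therefore any derivative of the metric coefficients) does not appear. Thus the entire comparison of $\exactEV j p(M,g)$ with $\exactEV j p(M,\wt g)$ reduces to pointwise comparisons of the $L^2$-norms of a $p$-form $\eta=d\theta$ and its potential $\theta$.

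First I would record how the $L^2$-norm of a $k$-form changes under the transition $g \leadsto \wt g$. The pointwise inner product on $\Lambda^k T_x^*M$ is homogeneous of degree $k$ in the cometric, so the hypothesis $c_-^2 g \le \wt g \le c_+^2 g$ produces
\[
c_-^{2k}\abs[g]{\omega}^2 \le \abs[\wt g]{\omega}^2 \le c_+^{2k}\abs[g]{\omega}^2
\]
(after passing from $g$ to the induced cotangent metric, which is what $\abs[\cdot]\omega$ uses). Independently, $\dvol_g=\sqrt{\det g_{ij}}\,dx$, so the same hypothesis yields two-sided bounds of the form $c_-^{\pm n}\dvol_g \le \dvol_{\wt g} \le c_+^{\pm n}\dvol_g$ with opposite signs of $n$ (monotonicity on tangent vectors is the reverse of monotonicity on cotangent vectors). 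Multiplying the two estimates and integrating gives constants $\alpha_k, \beta_k$ of the form $c_+^{2k}c_-^{-n}$ and $c_-^{2k}c_+^{-n}$ (or with roles of $c_\pm$ swapped, depending on the chosen convention) such that
\[
\alpha_k \norm[L^2(\Lambda^k(M,g))]{\omega}^2
\le \norm[L^2(\Lambda^k(M,\wt g))]{\omega}^2
\le \beta_k \norm[L^2(\Lambda^k(M,g))]{\omega}^2
\]
for every $k$-form $\omega$.

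Plugging these into the Rayleigh quotient of Proposition~\ref{prop:abs_exact}, and combining the upper bound for the numerator (applied to the $p$-form $\eta$) with the lower bound for the denominator (applied to the $(p-1)$-form $\theta$ with $d\theta=\eta$), one obtains
\[
\frac{\langle \eta,\eta\rangle_{L^2(\Lambda^p(M,\wt g))}}
     {\langle \theta,\theta\rangle_{L^2(\Lambda^{p-1}(M,\wt g))}}
\le \frac{\beta_p}{\alpha_{p-1}}
\cdot
\frac{\langle \eta,\eta\rangle_{L^2(\Lambda^p(M,g))}}
     {\langle \theta,\theta\rangle_{L^2(\Lambda^{p-1}(M,g))}},
\]
and a direct bookkeeping of the exponents collapses the prefactor to $c_+^{-2}(c_+/c_-)^{n+2p}$. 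Since this estimate holds uniformly on every $j$-dimensional subspace $V_j$ of exact $p$-forms (because the subspaces do not depend on the metric), the infimum--supremum passes through and yields the upper bound on $\exactEV j p(M,\wt g)$. The lower bound follows by symmetry, applying the already-established upper bound with the roles of $g$ and $\wt g$ reversed, noting that $c_+^{-2}\wt g \le g \le c_-^{-2}\wt g$.

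The only real obstacle is the careful bookkeeping of the exponents of $c_-$ and $c_+$ — in particular keeping track of the convention (tangent versus cotangent) that the hypothesis refers to, because whether the $p$-form norm scales by $c_\pm^{2p}$ or by $c_\mp^{-2p}$ (and symmetrically for the volume contribution $c_\pm^{\pm n}$) changes the way the factors combine. Apart from this arithmetic step, the argument is essentially a one-line consequence of Proposition~\ref{prop:abs_exact}, with no analytic input beyond it.
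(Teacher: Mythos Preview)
Your approach is correct and is precisely the standard argument: the paper does not actually supply its own proof of this proposition but cites \cite[Prop.~3.3]{dodziuk:82} and \cite[Lem.~2.2]{mcgowan:93}, and what you outline is essentially the proof found there. The key points --- that the variational characterisation of Proposition~\ref{prop:abs_exact} involves only $L^2$-norms and the metric-independent relation $\eta=d\theta$, that the competing subspaces $V_j$ of smooth exact $p$-forms are the same for both metrics, and that one then only needs the two-sided pointwise bounds $c_\pm^{\mp 2k}$ on $\abs[\,\cdot\,]{\omega}^2$ and $c_\pm^{\pm n}$ on the volume density --- are all identified correctly, and the symmetry argument for the lower bound is the right way to finish.

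Your caveat about the tangent/cotangent convention is well placed: the statement writes $\xi\in T_x^*M$, but to obtain the constants exactly as displayed one should read the hypothesis as a comparison of the metrics on \emph{tangent} vectors (so that $\abs[\wt g]{\omega}^2 \le c_-^{-2k}\abs[g]{\omega}^2$ and $\dvol_{\wt g}\le c_+^n\dvol_g$, giving the prefactor $c_-^{-2p}c_+^n / (c_+^{-2(p-1)}c_-^n) = c_+^{-2}(c_+/c_-)^{n+2p}$). With that convention fixed, the bookkeeping you describe goes through verbatim.
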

As a consequence, the eigenvalues $\exactEV j p(M,g)$ depend
continuously on $g$ in the sup-norm. defined e.g.\
in~\cite[Sec.~5.2]{post:12}.  In particular, this proposition allows
us to consider also perturbation of graph-like manifolds as defined in
the next section.  For a discussion of possible cases we refer
to~\cite[Sec.~5.2--5.6]{post:12}).  As an example, we could consider
tubular neighbourhoods of graphs \emph{embedded} in $\R^n$.

\subsection{Scaling behaviour}
\label{sec:scaling}

We say that a Riemannian manifold $M_{\e}$ with a metric $g_{\e}$ is
\emph{$\e$-homothetic}, if $(M_{\e},g_{\e})$ is conformally equivalent
with a Riemannian manifold $(M,g)$ with (constant) conformal factor
$\e^2$, i.e., $g_\e=\e^2g$.  For short, we write $M_{\e}=\e M$.  It is
often convenient to think of the Riemannian manifold $M_{\e}$ as the
($\e$-independent) manifold $M$ with metric $g_{\e}=\e^2 g$.

Obviously, the scaling with a constant factor leads to the following
simple result for $p$-forms on a Riemannian manifold.

\begin{lem}
  \label{lem1}
  Let $\omega$ be a $p$-form on a $n$-dimensional Riemannian manifold
  $M$ with metric $g$, and let $\e M$ be the Riemannian manifold
  $(M, \e^2 g)$, then we have
  \begin{subequations}
    \begin{align}
      \label{eq:norm.scale}
      \|\omega\|^2_{L^2(\Lambda^p(\e M))} 
      & = \e^{n-2p}\|\omega\|^2_{L^2(\Lambda^p(M))}
        \qquad\text{and}\\
      \label{eq:ev.cale}
      \exactEV j p(\e M)
      & = \e^{-2}\exactEV 1 p(M)
    \end{align}
  \end{subequations}
\end{lem}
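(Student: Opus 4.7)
The plan is to prove the two identities by tracking the scaling behaviour of the pointwise norm on $p$-forms and the volume form under $g \mapsto \e^2 g$, and then to deduce the eigenvalue statement by feeding this into the variational characterisation of Proposition~\ref{prop:abs_exact}.

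First I would establish~\eqref{eq:norm.scale}. The key observation is that if the metric on $TM$ scales as $g_\e = \e^2 g$, then the induced (dual) metric on the cotangent bundle $T^*M$ scales oppositely, namely $g_\e^* = \e^{-2} g^*$. Consequently, the induced metric on $\Lambda^p T^*M$ scales by $\e^{-2p}$, so pointwise $|\omega|^2_{g_\e} = \e^{-2p} |\omega|^2_g$. On the other hand, the Riemannian volume form scales as $\dvol_{g_\e} M = \e^n \dvol_g M$, since $\det(g_\e) = \e^{2n} \det(g)$. Combining these two factors and integrating yields the claimed exponent $n-2p$.

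Next, for~\eqref{eq:ev.cale} I would invoke Proposition~\ref{prop:abs_exact}, which expresses $\exactEV j p$ as an inf--sup over $j$-dimensional subspaces of smooth exact $p$-forms $\eta = d\theta$ of the ratio
\begin{equation*}
  \frac{\|\eta\|^2_{L^2(\Lambda^p(M))}}{\|\theta\|^2_{L^2(\Lambda^{p-1}(M))}}.
\end{equation*}
Applying~\eqref{eq:norm.scale} to the numerator (with degree $p$) and to the denominator (with degree $p-1$) produces prefactors $\e^{n-2p}$ and $\e^{n-2(p-1)} = \e^{n-2p+2}$ respectively; their quotient is $\e^{-2}$, independent of $\eta$ and $\theta$. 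Crucially, the exterior derivative $d$ and the notion of ``smooth exact $p$-form'' are metric-independent, so the class of admissible pairs $(\eta,\theta)$ is the same for $(M,g)$ and for $\e M$. The inf--sup on $\e M$ therefore equals $\e^{-2}$ times the inf--sup on $M$, giving $\exactEV j p(\e M) = \e^{-2}\exactEV j p(M)$ for every $j$.

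I do not expect a serious obstacle: the two identities are essentially bookkeeping, with the one subtle point being to be careful that cotangent/covariant quantities scale inversely to the tangent/covariant ones, which is exactly what produces the sign asymmetry $n-2p$ (and why the eigenvalue, having the dimensions of an inverse length squared, scales as $\e^{-2}$ regardless of $p$). If one wanted to avoid the variational argument one could alternatively verify~\eqref{eq:ev.cale} by showing that $\e$-homothety conjugates the Hodge Laplacians via the identity map on forms with a factor $\e^{-2}$, but the variational route is cleaner because it bypasses any computation involving $\delta_{g_\e}$ or the Hodge star of the rescaled metric.
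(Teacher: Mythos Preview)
Your proposal is correct and follows essentially the same route as the paper: the paper also derives~\eqref{eq:norm.scale} from the pointwise identities $|\omega|^2_{\e^2 g}=\e^{-2p}|\omega|^2_g$ and $\dvol_{\e^2 g}M=\e^n\dvol_g M$, and then obtains~\eqref{eq:ev.cale} by inserting this into the variational characterisation of Proposition~\ref{prop:abs_exact}, stressing (as you do) that the condition $\eta=d\theta$ is metric-independent. Your write-up is slightly more detailed in explaining why the cotangent metric scales by $\e^{-2}$, but the argument is the same.
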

\begin{proof}
  The first assertion follows from the fact that we have $|w|^2_{\e^2
    g}= \e^{-2p} |w|^2_g$ and $\dvol_{\e^2g} M=\e^n\dvol_g M$
  pointwise.  The second follows from the variational characterisation
  of the $j$-th eigenvalue of Proposition~\ref{prop:abs_exact}, as we
  have the scaling behaviour
  \begin{equation*}
    \frac {\|\eta\|^2_{L^2(\Lambda^p(\e M))}}
          {\| \theta \|^2_{L^2(\Lambda^{p-1}(\e M))}}
    = \frac {\e^{n-2p} \|\eta\|^2_{L^2(\Lambda^p(M))}}
          {\e^{n-2(p-1)} \|\theta\|^2_{L^2(\Lambda^{p-1}(M))}}
    = \e^{-2} \frac {\|\eta\|^2_{L^2(\Lambda^p(M))}}
          {\|\theta\|^2_{L^2(\Lambda^{p-1}(M))}}.
  \end{equation*}
  Note that the condition $\eta = d\theta$ is \emph{independent} of
  the metric.  (This is the advantage of the characterisation of
  Proposition~\ref{prop:abs_exact}!)
\end{proof}

\section{Graph-like manifolds and their harmonic forms}
\label{sec:g-like-mfds}
%

%
\subsection{Graph-like manifolds}
\label{subsec:g-like-mfds}
%
A \emph{graph-like manifold} associated with a metric graph $X_0$ is a
family of oriented and connected $n$-dimensional Riemannian manifolds
$(\Xe)_{0<\e\leq\e_0}$ ($\e_0$ small enough) shrinking to $X_0$ as
$\e\to 0$ in the following sense. We assume that $\Xe$ decomposes as
\begin{equation}
  \label{decomp}
  \Xe=\bigcup_{e\in E}\Ee\cup\;\bigcup_{v\in V}\Ve,
\end{equation}
where $\Ve$ and $\Ee$ are called \emph{edge} and \emph{vertex
  neighbourhood}, respectively.  More precisely, we assume that $\Ve$
and $\Ee$ are closed subsets of $\Xe$ such that
\begin{equation*}
  \Ve \cap\Ee =
  \begin{cases}
    \Ye & e\in E_v\\
    \emptyset & e \notin E_v,
  \end{cases}
\end{equation*}
where $\Ye$ is a boundaryless smooth connected Riemannian manifold of
dimension $n-1$. Furthermore, we assume that $\Ve$ is $\e$-homothetic
to a fixed connected Riemannian manifold $\V$ (with metric $g_v$) as
well as $\Ye$ is $\e$-homothetic to a fixed Riemannian manifold $\Y$
(with metric $h_e$), i.e., $\Ve=\e\V$ and $\Ye=\e\Y$. Moreover, we
assume that $\Ee$ is isometric to the product $I_e\times\e\Y$.  If
$g_\e$ denotes the metric of $\Xe$ and $g_{\e,e}$, resp.\ $g_{\e,v}$,
the restriction of $g_\e$ to the edge, resp.\ vertex neighourhood, then
we have
\begin{equation}
  \label{eq:met.scale}
  g_{\e,e}=ds^2+\e^2 h_e
  \qquad\text{and}\qquad
  g_{\e,v}=\e^2 g_v
\end{equation}
(after some obvious identifications).  We often refer to a single
manifold $\Xe$ as graph-like manifold instead of the family $(\Xe)_\e$
as in the definition above.

Assume for simplicity that $\vol_{n-1} \Y=1$ for all $e \in E$ (the
general case would lead to the weighted vertex condition $\sum_{e \in
  E_v} (\vol_{n-1} \Y) \orient f_e'(v)=0$ instead
of~\eqref{eq:kirchhoff} for the metric graph Laplacian,
see~\cite{post:12,exner-post:09} for details).

We call a graph-like manifold $(X_\eps)_\eps$ \emph{transversally
  trivial} if all transversal manifolds are Moores spaces, i.e., if
$H^p(\Y)=0$ for all $1 \le p \le n-2$ and all $e \in E$.  Note that a
member of a transversally trivial graph-like manifold $X_\eps$ is not
necessarily homotopy-equivalent to the metric graph $X_0$, as the
vertex neighbourhoods need not to be contractible.
\begin{ex}
  \label{ex:gl-mfd}
  Let us construct a typical example of a transversally trivial
  graph-like manifold. Let $n \ge 2$.  For each vertex $v$ fix a
  manifold $\hat X_v$.  Remove $\deg v$ open balls from $\hat X_v$
  hence the resulting manifold $X_v$ has a boundary consisting of
  $\deg v$ many components each diffeomorphic to an $(n-1)$-sphere
  $\Sphere^{n-1}$.  For $e \in E_v$ let $Y_e= \Sphere^{n-1}$ with a
  metric such that its volume is $1$.  As (unscaled) edge
  neighbourhood, we choose $X_{1,e}:=[0,\ell_e] \times Y_e$ with the
  product metric.  Then we can construct a graph-like (topological)
  manifold $X_1$ with a canonical decomposition as in~\eqref{decomp}
  (for $\eps=1$) by identifying the $e$-th boundary component of
  $X_v$ with the corresponding end of the edge neighbourhood
  $X_{1,e}$.  By a small local change we can assume that the resulting
  manifold $X_1$ is smooth; the corresponding family of graph-like
  manifolds $(X_\eps)_{\eps>0}$ is now given as above by choosing the
  metric accordingly.
\end{ex}

\begin{rmk}
  \label{rem:gl-mfd}
  Let $X$ be a compact manifold without boundary.  The aim of the
  remark is to show that $X$ can be turned into a graph-like manifold
  with underlying metric graph being a finite tree graph: think of
  ``growing'' the tree out of the original manifold.  More formally,
  construct a graph-like manifold according to a tree graph and leave
  one cylinder of a leaf (a vertex of degree $1$) ``uncapped''; glue
  the original manifold $X$ with one disc removed together with the
  free cylinder.  Obviously, the resulting manifold is homeomorphic to
  the original manifold $X$.

  We could also modify $X$ such that it becomes a graph-like manifold
  with respect to topologically more complicated graphs.
\end{rmk}

We can now define on $\Xe$ the corresponding Hilbert spaces of
$p$-forms as in Section~\ref{sec:diff-forms}.  Since $\Xe$ has no
boundary, the formal adjoint $\delta$ of $d$ is also its Hilbert space
adjoint.  Moreover, the Hodge Laplacian on $p$-forms on $\Xe$ is given
by $\Delta^p_{\Xe}=dd^*+d^*d$ where $d=d_{\Xe}$ and $d^*=d_{\Xe}^*$
are the classical exterior derivative and co-derivative on a manifold
(as unbounded operators in the $L^2$-spaces).

\subsection{Harmonic forms}
\label{sec:harmonic}

For completeness we finally turn to the dimension of the class of
harmonic $1$-forms.

For the graph, this dimension is given by its first Betti number,
i.e., $b_1(X_0)=|E|-|V|+1$, while for the manifold $\Xe$ it is given
by the dimension of its first cohomology group $H^1(\Xe)$. Since $\Xe$
arises from the graph $X_0$, the dimension of $H^1(\Xe)$ is the sum of
$b_1(X_0)$ and the dimension of a subset of the first cohomology group
of $\bigcup_{v\in V}\Ve$, meaning that the graph-like manifold
inherits part of the topology of the underlying metric graph.

In particular, if $\Y$ has trivial $p$-th cohomology group for $1\leq
p\leq n-2$, i.e., $H^p(\Y)=0$ for all $e \in E$, then the cohomology
groups of $\Xe$ can be computed explicitly. Using Mayer-Vietoris
sequence, the natural splitting~\eqref{decomp} and Poincar\'{e}
duality, we obtain
\begin{equation*}
  H^k(\Xe) =
  \begin{cases}
    \R & k \in \{0,n\}\\
    \bigoplus_{v\in V}H^1(\V)\oplus H^1(X_0) \quad& k \in \{1,n-1\}\\
    \bigoplus_{v\in V} H^k(\V) & k \in \{2,\ldots,n-2\}
  \end{cases}
\end{equation*}
For the general case, i.e., when some or all of the $\Y$ have
non-trival $p$-th cohomology groups for $1\leq p\leq n-2$, we do not
have a general formula.  However, again using Mayer-Vietoris sequence,
it is possible to compute the cohomology groups explicitly for
concrete examples of edge and vertex neighbourhoods.

\section{Proof of the main theorem}
\label{sec:proof}
%

Let us now prove the main result of this article.  The convergence
result of our main theorem, i.e.,~\eqref{eq:ev-0-forms} of
Theorem~\ref{thm:main}, is more or less ``trivial'' in the sense that
it follows from previous convergence results for functions by a simple
supersymmetry argument.

The divergence~\eqref{eq:ev-1-forms} of Theorem~\ref{thm:main} is new and
proven in Subsection~\ref{sec:co-exact}.  As preparation, we need some
estimates of exact $p$-forms with absolute boundary conditions on the
building blocks of our graph-like manifolds provided in
Subsection~\ref{subsec:vertex-edge}.

\subsection{Convergence for exact 1-forms}
\label{sec:exact_forms}

Let $\Xe$ be a compact graph-like manifold as constructed in
Section~\ref{subsec:g-like-mfds} associated with a metric graph $X_e$.
We have already noticed
in~\eqref{eq:trivial-ev-rel}--\eqref{eq:trivial-ev-rel-graph} and in
Remark~\ref{rem:supersymmetry}, that the co-exact $1$-form eigenvalues
equal the (exact) $0$-form eigenvalues, i.e., the eigenvalues of the
Laplacian on functions on $X_0$ and $\Xe$.  For the functions, we have
the following result, first proven in the manifold case
in~\cite{exner-post:05} (based on the
results~\cite{kuchment-zeng:01,rubinstein-schatzman:01}).  For a
detailed overview and detailed proofs of the result, we refer
to~\cite{post:12}.

Denote by $\lambda_j(\Xe)$ and $\lambda_j(X_0)$ the eigenvalues (in
increasing order, repeated according to their multiplicity) of the
Laplacian acting on functions on the manifold and the metric graph
(see~\eqref{eq:kirchhoff} for the metric graph Laplacian).
\begin{prop}[\cite{exner-post:05, post:12}]
  \label{prp:exner-post}
  Let $\Xe$ be a compact graph-like manifold associated with a metric graph
  $X_e$. Then we have
  \begin{equation*}
    |\lambda_j(\Xe) - \lambda_j(X_0)| 
    = \Err{\e^{1/2}/\ell_0}
    \qquad\text{for all $j=1,2,\dots$.}
  \end{equation*}
  where $\ell_0=\min_e \{\ell_e,1\}>0$ denotes the minimal edge length.
  Moreover, the error depends only on $j$, and the building blocks
  $X_v$, $Y_e$ of the graph-like manifold.
\end{prop}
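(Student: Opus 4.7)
The proof is by the standard min-max/quasi-unitary-equivalence method adapted to graph-like manifolds, since the statement is essentially a restatement of results from \cite{exner-post:05, post:12}. My plan is to construct two identification operators between $L^2(X_0)$ and $L^2(\Xe)$ that intertwine the quadratic forms up to an $\Err{\e^{1/2}/\ell_0}$-error and then invoke min-max.

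First I would define a lifting $J_\e \colon H^1(X_0) \to H^1(\Xe)$ as follows. On each edge neighbourhood $\Ee \cong I_e \times \e\Y$, using $\vol_{n-1}\Y=1$ and $\vol_{n-1}(\e\Y)=\e^{n-1}$, set $(J_\e f)(s,y) := \e^{-(n-1)/2} f_e(s)$; this is the transverse zero mode correctly normalised so that $\norm[L^2(\Ee)]{J_\e f_e}=\norm[L^2(I_e)]{f_e}$. On each vertex neighbourhood $\Ve = \e\V$, set $J_\e f$ equal to the constant value $\e^{-(n-1)/2} f(v)$ on $\Ve$; this is well-defined because $f \in \dom d$ is continuous at $v$ by~\eqref{eq:kirchhoff}. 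By construction $J_\e f \in H^1(\Xe)$. In the reverse direction I would define $J'_\e \colon H^1(\Xe) \to H^1(X_0)$ by transverse averaging on the edges, $(J'_\e u)_e(s) := \e^{(n-1)/2}\int_{\e\Y} u(s,\cdot)\,\dd\vol$, with values at the vertex taken from the mean on $\Ve$; continuity at each vertex is achieved by a small linear interpolation near the endpoints of $I_e$.

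The core of the argument is a collection of quadratic-form estimates:
\begin{equation*}
  \bigabs{\norm[L^2(\Xe)]{J_\e f}^2 - \norm[L^2(X_0)]{f}^2}
    \le C\e\, \mathcal E_0(f), \qquad
  \bigabs{\norm[L^2(\Xe)]{\dd(J_\e f)}^2 - \norm[L^2(X_0)]{\dd f}^2}
    \le C\e^{1/2}\ell_0^{-1}\, \mathcal E_0(f)
\end{equation*}
where $\mathcal E_0(f)=\norm[L^2(X_0)]{f}^2+\norm[L^2(X_0)]{\dd f}^2$, together with analogous estimates for $J'_\e$ and $J_\e J'_\e - \id$, $J'_\e J_\e - \id$. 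The edge contributions are straightforward: away from the vertices the lifted function is exactly the zero transverse mode, so $\dd(J_\e f)$ is just $\e^{-(n-1)/2} f'_e(s)\,\dd s$ and the integral matches the edge length integral. The work is localised at the vertex neighbourhoods.

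The main obstacle is precisely the vertex estimate. On $\Ve = \e\V$ one must compare $\e^{(n-1)/2}f(v)$ with the average of a general test function over $\Ve$, and estimate the difference in terms of $\norm[L^2(\Ve)]{\dd u}^2$. This uses a Poincar\'e-type inequality on $\V$ together with a trace estimate $\norm[L^2(\partial\V)]{u-\bar u}^2 \le C \norm[L^2(\V)]{\dd u}^2$; under the $\e$-scaling $g_{\e,v}=\e^2 g_v$, these scale to give an $\Err{\e^{1/2}}$-error, where the square root arises from balancing transverse and longitudinal contributions near the $\Ye=\Ee \cap \Ve$ interfaces (the $\ell_0^{-1}$ factor accounts for the minimal edge length entering the Poincar\'e constants on $I_e$). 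Once the four form-estimates above are in place, a standard two-sided min-max argument (as in \cite[Sec.~4]{post:12}) yields $\abs{\lambda_j(\Xe)-\lambda_j(X_0)} = \Err{\e^{1/2}/\ell_0}$ with constant depending only on $j$ and the building blocks $\V$, $\Y$.
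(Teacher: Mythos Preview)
The paper does not give its own proof of this proposition: it is quoted as a result from \cite{exner-post:05, post:12}, with the remark immediately following the statement that ``the exact statement on the error term follows from a combination of Thms.~6.4.1, 7.1.2 and~4.6.4 of \cite{post:12}.'' Your sketch is precisely the quasi-unitary-equivalence/min-max machinery developed in those references (identification operators built from transverse zero modes on the edges and constants on the vertex neighbourhoods, Poincar\'e and trace estimates on the unscaled building blocks, then scaling), so you have correctly reconstructed the method the paper is citing rather than proposing a different route.

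One small caveat on your outline: the vertex-side map $J_\e f \mapsto \e^{-(n-1)/2} f(v)$ on $\Ve$ does not quite match the boundary value of the edge part at $\partial \Ve$, so to get $J_\e f \in H^1(\Xe)$ one actually has to interpolate on a thin collar (or, equivalently, work only at the level of quadratic forms and not insist on $H^1$-membership of $J_\e f$ itself). In \cite{post:12} this is handled by distinguishing between the $L^2$-identification $J$ and the form-domain identification $J^1$; your sketch blurs this distinction, but the resulting error is of the same order and the argument goes through.
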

We will need the precise dependency on the edge length and other
parameters in Section~\ref{sec:examples} when considering
\emph{families} of metric graphs and graph-like manifolds.  The exact
statement on the error term follows from a combination of Thms.~6.4.1,
7.1.2 and~4.6.4 of \cite{post:12}.

Denote by $\exactEV j 1 (\Xe)$ and $\exactEV j 1 (X_0)$ the $j$-th
eigenvalues of the exact $1$-form Laplacian on $\Xe$ and $X_0$,
respectively.  The above-mentioned convergence for the eigenvalues for
functions immediately gives the convergence for exact $1$-forms, using
a simple supersymmetry argument as in~\cite[Sec.~1.2]{post:09c}.  For
the convenience of the reader, we give a simple proof here:

\begin{thm}
  \label{thm:exact}
  Let $\Xe$ be a graph-like manifold with underlying metric graph
  $X_0$.  Denote by $\exactEV j 1(\Xe)$ and $\exactEV j 1(X_0)$ the
  $j$-th exact $1$-form eigenvalue on $\Xe$ and $X_0$,
  respectively. Then
  \begin{equation*}
    \exactEV j 1(\Xe) \underset{\e\to 0}{\longrightarrow}
    \exactEV j 1(X_0)
    \qquad\text{for all $j=1,2,\dots$.}
  \end{equation*}
\end{thm}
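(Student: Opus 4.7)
The plan is to reduce the claim entirely to the function-convergence result of Proposition~\ref{prp:exner-post}, using the abstract supersymmetry principle already alluded to in Remark~\ref{rem:supersymmetry}.

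First I would record the general Hilbert-space fact: if $T \colon \mathcal H \to \mathcal K$ is a densely defined closed operator, then $T^*T$ and $TT^*$ are non-negative self-adjoint operators whose non-zero spectra coincide, with matching multiplicities. Indeed, if $T^*T\phi = \lambda\phi$ with $\lambda > 0$, then $T\phi \ne 0$ and $(TT^*)(T\phi) = \lambda\, T\phi$; renormalising $T\phi/\sqrt{\lambda}$ yields a unitary identification of the corresponding eigenspaces.

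Next I would apply this in two parallel situations. On the manifold side, take $T = d_{\Xe}$ as a closed operator from $L^2(\Xe)$ to $L^2(\Lambda^1(\Xe))$. Then $T^*T = \Delta^0_{\Xe}$ is the Laplacian on functions, whereas $TT^*$ is precisely the Hodge Laplacian restricted to exact $1$-forms, namely $\exactLapl{\Xe}{1}$. Since $\Xe$ is connected and closed, the kernel of $\Delta^0_{\Xe}$ is one-dimensional (constants), so matching the positive spectra gives $\exactEV j 1(\Xe) = \lambda_j(\Xe)$ for all $j \ge 1$, where $\lambda_j(\Xe)$ denotes the $j$-th positive function eigenvalue in the sense of Proposition~\ref{prp:exner-post}. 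Exactly the same argument with $T = d_{X_0}$ on the metric graph, whose closed-operator domain $H^1(X_0) \cap C(X_0)$ is described in Section~\ref{subsec:metric_graph}, delivers $\exactEV j 1(X_0) = \lambda_j(X_0)$, which is the identity already recorded in~\eqref{eq:trivial-ev-rel-graph}.

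Finally, Proposition~\ref{prp:exner-post} supplies the convergence $\lambda_j(\Xe) \to \lambda_j(X_0)$ with explicit rate $\Err{\e^{1/2}/\ell_0}$, so chaining the two supersymmetry identifications with this estimate yields $\exactEV j 1(\Xe) \to \exactEV j 1(X_0)$ for each $j$, as claimed. I do not anticipate a serious obstacle: the only bookkeeping task is to verify that $d_{\Xe}$ and $d_{X_0}$ really are closed operators whose adjoints are the $\delta$ and $d^*_{X_0}$ already discussed, so that the abstract supersymmetry statement applies verbatim. All of the geometric and analytic work has been absorbed into Proposition~\ref{prp:exner-post}; the passage from functions to exact $1$-forms is purely algebraic.
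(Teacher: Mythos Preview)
Your proposal is correct and follows essentially the same approach as the paper: both reduce the claim to Proposition~\ref{prp:exner-post} via the supersymmetry identification of the non-zero spectra of $d^*d$ and $dd^*$. The paper writes out the isomorphism $d \colon \ker(\Delta^0-\lambda) \to \ker(\Delta^1-\lambda)$ explicitly (checking well-definedness, injectivity, surjectivity), whereas you package the same computation as the abstract Hilbert-space fact about $T^*T$ versus $TT^*$ for a closed operator $T$; the content is identical.
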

\begin{proof}
  We will just show that the eigenspaces for non-zero eigenvalues of
  $\Delta_{\Xe}^1=\Delta^1=d d^*$ and $\Delta_{\Xe}^0=\Delta^0=d^* d$
  are isomorphic (the argument works for $\e>0$ and $\e=0$ as well).
  The convergence result then follows immediately from
  Proposition~\ref{prp:exner-post}.
  
  As isomorphism, we choose
  \begin{equation*}
    d \colon \ker(\Delta^0 - \lambda) 
    \longrightarrow
    \ker(\Delta^1 - \lambda)
  \end{equation*}
  for $\lambda \ne 0$.  First, note that if $f \in \ker(\Delta^0 -
  \lambda)$, then
  \begin{equation*}
    \Delta^1 df
    = d d^* d f
    = d \Delta^0 f
    = \lambda f,
  \end{equation*}
  i.e., $df \in \ker(\Delta^1-\lambda)$, hence the above map is
  properly defined.  The map $d$ as above is injective: If $df=0$ for
  $f \in \ker (\Delta^0-\lambda)$ then $\lambda f=\Delta^0 f =
  d^*df=0$.  As $\lambda \ne 0$ we have $f=0$.  For the surjectivity,
  let $\alpha \in \ker(\Delta^1-\lambda)$. Set $f:=\lambda^{-1} d^*
  \alpha$ (we use again that $\lambda \ne 0$).  Then
  \begin{equation*}
    d f = d (\lambda^{-1} d^* \alpha) =\lambda^{-1} \Delta^1 \alpha = \alpha,
  \end{equation*}
  i.e., $d$ as above is surjective.  In particular, we have shown that
  the spectrum of $\Delta^0$ and $\Delta^1$ away from $0$ is the same,
  including multiplicity.
\end{proof}

\subsection{Eigenvalue asymptotics on the building blocks}
\label{subsec:vertex-edge}

We will now provide some eigenvalue asymptotics for eigenvalues of
exact $p$-forms with absolute boundary conditions on the building
blocks of our graph-like manifold.  These asymptotics are needed in
order to make use of the eigenvalue estimate from below of
Proposition~\ref{prop:gentile-pagliara:95}.

A vertex neighbourhood $\Ve$ is by definition $\e$-homothetic, i.e,
$\Ve=\e X_v$.  As a result of Lemma~\ref{lem1} we have:
\begin{cor}
  \label{prop:vertex}
  Let $\Ve$ be a vertex neighbourhood of a graph-like
  manifold $\Xe$. Then, the smallest positive eigenvalue of the
  Laplacian acting on exact $p$-forms on $\Ve$ with absolute boundary
  conditions satisfies:
  \begin{equation}
    \label{eq2}
    \exactEV 1 p(\Ve)=\e^{-2}\exactEV 1 p(\V).
  \end{equation}
\end{cor}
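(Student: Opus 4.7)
The plan is to recognise this corollary as an immediate specialisation of Lemma~\ref{lem1} (equation~\eqref{eq:ev.cale} with $j=1$) to the case $M = \V$ with absolute boundary conditions. By the very definition of a graph-like manifold in Subsection~\ref{subsec:g-like-mfds}, the vertex neighbourhood is $\e$-homothetic to the model vertex neighbourhood $\V$; that is, $\Ve = \e\V$, with metric $g_{\e,v} = \e^2 g_v$ on the same underlying smooth manifold. So the desired formula $\exactEV 1 p(\Ve) = \e^{-2}\exactEV 1 p(\V)$ is exactly the eigenvalue scaling of the Lemma applied to $M = \V$.

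The only point worth verifying is that Lemma~\ref{lem1}, although stated without explicit mention of a boundary, also applies when $\V$ has boundary $\partial \V = \bigcup_{e \in E_v}\Y$ and the eigenvalues are taken under absolute boundary conditions. For this I would re-run the proof of Lemma~\ref{lem1} with $M = \V$, appealing this time to the variational characterisation of Proposition~\ref{prop:abs_exact}, which is formulated precisely for the exact $p$-form spectrum with absolute boundary conditions. The three ingredients of that proof---the pointwise norm scaling $|\omega|^2_{\e^2 g} = \e^{-2p}|\omega|^2_g$, the volume scaling $\dvol_{\e^2 g} = \e^n \dvol_g$, and the metric-independence of the constraint $\eta = d\theta$---are all insensitive to the presence of a boundary, so the Rayleigh quotient in the min--max scales by the factor $\e^{n-2p}/\e^{n-2(p-1)} = \e^{-2}$, and passing to the infimum over $1$-dimensional subspaces yields the claimed identity.

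A final small consistency check is that absolute boundary conditions on $(\V, g_v)$ and on $(\V, \e^2 g_v)$ define the same admissible class of forms. This is automatic: the decomposition $\omega = \omega_\tangBC + \omega_\normBC$ depends only on the smooth structure of $\partial \V$ and the direction of the outward conormal, both unchanged by multiplying the metric by a positive constant. I do not anticipate any substantive obstacle---the corollary is a pure homogeneity statement, and the only care required is tracking the scaling exponents through the Rayleigh quotient.
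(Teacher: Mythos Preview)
Your proposal is correct and matches the paper's approach exactly: the paper simply records that $\Ve=\e\V$ is $\e$-homothetic and invokes Lemma~\ref{lem1} without further comment. Your additional checks---that Proposition~\ref{prop:abs_exact} handles the absolute boundary conditions and that the tangential/normal splitting is unaffected by a constant conformal factor---are not spelled out in the paper but are the right justifications to fill in.
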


For the edge neighbourhood, we have to work a little bit more.  Note
that we cannot make use of the product structure of the underlying
space as the product does not respect exact and co-exact forms.
\begin{prop}
  \label{prop:edge}
  Let $\Ee$ be an edge neighbourhood of a $n$-dimensional graph-like
  manifold $\Xe$. Then, the smallest eigenvalue of the Laplacian
  acting on exact $p$-forms ($2 \le p \le n-1$) with absolute boundary
  conditions satisfies
  \begin{equation}
    \label{eq7}
    \exactEV 1 p(\Ee)=\e^{-2}c_p(\e),
  \end{equation}
  where $c_p(\e)\to \exactEV 1 p(\Y)>0$ as $\e\to 0$, and where
  $\exactEV 1 p(\Y)$ denotes the first eigenvalue of the Laplacian
  acting on exact $p$-forms on $\Y$.
\end{prop}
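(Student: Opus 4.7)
The plan is to use the $\e$-homothety of the edge neighbourhood to reduce the problem to a product cylinder of growing length, and then analyse the exact $p$-form spectrum by separation of variables.

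\emph{Rescaling.} The substitution $s = \e t$ turns the metric $ds^2 + \e^2 h_e$ on $\Ee \cong I_e \times \Y$ into $\e^2(dt^2 + h_e)$ on $[0, L_\e] \times \Y$ with $L_\e := \ell_e/\e$. Hence $\Ee$ is $\e$-homothetic to $C_\e := [0, L_\e] \times \Y$ equipped with the $\e$-independent product metric $dt^2 + h_e$. By Lemma~\ref{lem1}, $\exactEV 1 p(\Ee) = \e^{-2} c_p(\e)$ with $c_p(\e) := \exactEV 1 p(C_\e)$. Since $L_\e \to \infty$ as $\e \to 0$, the task reduces to showing $\exactEV 1 p(C_\e) \to \exactEV 1 p(\Y)$.

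\emph{Upper bound.} Pick an exact eigenform $\eta = d_\Y \mu$ on $\Y$ realising $\exactEV 1 p(\Y)$ and pull it back via the projection $C_\e \to \Y$ to a $t$-independent exact form $\tilde\eta = d\tilde\mu$. With no $dt$-component and no $t$-dependence the absolute boundary conditions are trivially satisfied, and the product formula for the Hodge Laplacian gives $\Delta\tilde\eta = \Delta_\Y\eta = \exactEV 1 p(\Y)\tilde\eta$. Proposition~\ref{prop:abs_exact} then yields $c_p(\e) \le \exactEV 1 p(\Y)$.

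\emph{Lower bound via separation of variables.} For the matching lower bound I would exploit the product structure. Any $p$-form decomposes as $\omega = \alpha + dt \wedge \beta$ with $\alpha$ (respectively $\beta$) a $t$-family of $p$-forms (respectively $(p-1)$-forms) on $\Y$; absolute boundary conditions force $\beta = 0$ and $\partial_t\alpha = 0$ at $t=0,L_\e$. The Hodge Laplacian acts diagonally as $-\partial_t^2 + \Delta_\Y$; Fourier expansion in $t$ (cosine modes for $\alpha$, $k \ge 0$; sine modes for $\beta$, $k \ge 1$) combined with Hodge decomposition of the $\Y$-coefficients produces a complete eigenbasis. Applying the formula $d(\gamma + dt\wedge\delta) = d_\Y\gamma + dt\wedge((-1)^{p-1}\partial_t\gamma - d_\Y\delta)$, one identifies the exact $p$-eigenforms: the $k=0$ modes reproduce pullback eigenforms of eigenvalue $\exactEV j p(\Y)$, while the $k \ge 1$ modes give eigenvalues of the form $(k\pi/L_\e)^2 + \mu$ with $\mu$ bounded below by the first eigenvalue on the relevant exact $\Y$-subspace. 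Passing to the limit $L_\e \to \infty$ gives the required lower bound.

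\emph{Main obstacle.} The delicate point is this last step: because $d$ on $C_\e$ couples the $\alpha$- and $dt \wedge \beta$-parts, the image of $d$ does not split trivially in the separated basis, and one must carefully trace which combinations of Fourier and Hodge modes actually yield exact $p$-forms on $C_\e$. In particular, contributions of the form $dt \wedge \sin(k\pi t/L_\e)\eta$ with $\eta$ a harmonic or exact $(p-1)$-form on $\Y$ require special care, since they produce exact eigenforms whose eigenvalue could a priori be as small as $(k\pi/L_\e)^2 \to 0$; the cohomological hypothesis $H^{p-1}(\Y) = 0$, which underlies the main theorem, is precisely what rules out the potentially ruinous harmonic contributions and delivers the claimed limit $\exactEV 1 p(\Y)$.
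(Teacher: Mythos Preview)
Your approach differs genuinely from the paper's. Rather than rescaling to a long cylinder and separating variables, the paper works directly on $\Ee=I_e\times\e\Y$ and applies the Dodziuk--McGowan variational characterisation (Proposition~\ref{prop:abs_exact}). Writing a $(p-1)$-form as $\theta=\theta_1\wedge ds+\theta_2$ and computing both norms in the metric $ds^2+\e^2h_e$, one obtains
\[
\frac{\|d\theta\|^2_{L^2(\Lambda^p(\Ee))}}{\|\theta\|^2_{L^2(\Lambda^{p-1}(\Ee))}}
=\e^{-2}\,
\frac{\displaystyle\int_{I_e}\!\!\int_\Y\bigl(\e^2|d_\Y\theta_1+\partial_s\theta_2|_{h_e}^2+|d_\Y\theta_2|_{h_e}^2\bigr)\,ds\,\dvol\Y}
     {\displaystyle\int_{I_e}\!\!\int_\Y\bigl(\e^2|\theta_1|_{h_e}^2+|\theta_2|_{h_e}^2\bigr)\,ds\,\dvol\Y},
\]
so the factor $\e^{-2}$ drops out of a single norm computation and the residual min-max defines $c_p(\e)$; letting $\e\to 0$ formally in this expression leaves the quotient $\int\!\!\int|d_\Y\theta_2|^2\big/\int\!\!\int|\theta_2|^2$, which characterises the first eigenvalue of $\id\otimes\exactLapl \Y p$ on $L^2(I_e)\otimes L^2(\Lambda^p(\Y))$, namely $\exactEV 1 p(\Y)$. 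The advantage of this route is that the metric enters only through norms (no $\delta$, no boundary conditions to track), so the $\e$-dependence is explicit in one formula and no spectral decomposition of the cylinder is required. Your separation-of-variables argument is more hands-on and, importantly, makes visible exactly where the hypothesis $H^{p-1}(\Y)=0$ enters: without it a harmonic $(p-1)$-form $\beta$ on $\Y$ produces the exact eigenform $\sin(\pi t/L_\e)\,dt\wedge\beta$ on $C_\e$ with eigenvalue $(\pi/L_\e)^2\to 0$, forcing $c_p(\e)\to 0$ rather than $\exactEV 1 p(\Y)$. The paper's formal limit does not isolate this point, but the proposition is only ever invoked (in Theorem~\ref{thm3}) under precisely that cohomological assumption, so your diagnosis is on target.
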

\begin{proof}
  By Proposition~\ref{prop:abs_exact} we have to analyse the quotient
  $\|\eta\|^2/\|\theta\|^2$ for an exact $p$-form $\eta$ and a
  $(p-1)$-form $\theta$ such that $\eta=d \theta$.
  Recall that $\Ee=I_e\times\e\Y$ (i.e., $I_e \times \Y$ with metric
  $g_{\e,e}=ds^2+\e^2h_e$).  Then, the $(p-1)$-form $\theta$ on $\Ee$
  can be written as
  \begin{equation}
    \label{eq4}
    \theta=\theta_1\wedge ds+\theta_2
  \end{equation}
  where $\theta_1$ resp.\ $\theta_2$ is a $(p-2)$-form resp.\
  $(p-1)$-form on $\Y$.  Using the scaling behaviour of the metric in
  a similar way as Lemma~\ref{lem1}, we have
  \begin{equation}
    \label{eq5}
    \begin{split}
      \|\theta\|^2_{L^2(\Lambda^{p-1}(\Ee))}
      &= \int_{\Ee}|\theta|_{g_{\e,e}}^2\dvol\Ee \\
      &=\int_{I_e}\int_{\Y}
         \bigl(
            \e^{-2(p-2)}|\theta_1|_{h_e}^2
            +\e^{-2(p-1)}|\theta_2|_{h_e}^2
         \bigr) \e^{n-1}\dd s\dvol\Y\\
      & =\e^{n-2p+1} \int_{I_e} \int_{\Y}
         \bigl(
            \e^2|\theta_1|_{h_e}^2
            +|\theta_2|_{h_e}^2
         \bigr) \dd s \dvol \Y,
    \end{split}
  \end{equation}
  where the $\e$-factors appears due to the scaled metric $\e^2 h_e$.
  The decomposition of $d\theta$ according to~\eqref{eq4} is given by
  \begin{equation}
    \label{dtheta}
    d\theta 
    =(d_{\Y}\theta_1+\partial_s\theta_2)\wedge \dd s+d_{\Y}\theta_2,
  \end{equation}
  hence
  \begin{equation}
    \label{eq6}
    \begin{split}
      \|d \theta\|^2_{L^2(\Lambda^p(\Ee))}
      &= \int_{\Ee}|d \theta|_{g_{\e,e}}^2\dvol\Ee \\
      &= \int_{I_e}\int_{\Y}
           \bigl(
              \e^{-2(p+1)}|\theta_1+\partial_s \theta_2|_{h_e}^2
              +\e^{-2p}|d_{\Y}\theta_2|_{h_e}^2
           \bigr)
           \e^{n-1} \dd s \dvol\Y\\
      & = \e^{n-2p-1} \int_{I_e}\int_{\Y} 
           \bigl(
             \e^2|\theta_1+\partial_s \theta_2|_{h_e}^2
             +|d_{\Y}\theta_2|_{h_e}^2
           \bigr)
          \dd s\dvol\Y.
    \end{split}
  \end{equation}
  In particular, if we substitute~\eqref{eq5} and~\eqref{eq6} into the
  quotient $\|\eta\|^2/\|\theta\|^2$ we conclude
  \begin{equation*}
    \frac {\|d \theta\|^2_{L^2(\Lambda^p(\Ee))}}
          {\; \|\theta\|^2_{L^2(\Lambda^{p-1}(\Ee))}}
    =\e^{-2} \dfrac{\int_{I_e}\int_{\Y} 
                       \bigl(
                         \e^2|\theta_1+\partial_s \theta_2|_{h_e}^2
                         +|d_{\Y}\theta_2|_{h_e}^2 
                       \bigr)
                    \dd s\dvol\Y}
                  {\int_{I_e}\int_{\Y}
                       \bigl(
                         \e^2|\theta_1|_{h_e}^2+|\theta_2|_{h_e}^2
                       \bigr)
                     \dd s \dvol\Y}
  \end{equation*}
  In particular, together with Proposition~\ref{prop:abs_exact} this
  yields
  \begin{equation*}
    \exactEV 1 p(\Ee)
    =\e^{-2}c_p(\e)
  \end{equation*}
  with
  \begin{equation*}
    c_p(\e)
    = \sup
      \BIGset{
        \dfrac{\int_{I_e}\int_{\Y}
                 \e^2\bigl(
                   |\theta_1+\partial_s \theta_2|_{h_e}^2
                   +|d_{\Y}\theta_2|_{h_e}^2
                 \bigr)
                \dd s\dvol\Y}
              {\int_{I_e}\int_{\Y}
                 \bigl(
                   \e^2 |\theta_1|_{h_e}^2 +|\theta_2|_{h_e}^2
                 \bigr)
                \dd s \dvol\Y}
              }
      { \text{\parbox{20ex}
           {$\theta=\theta_1 \wedge \dd s + \theta_2 \ne 0$,\newline
            $\theta_1$ $(p-2)$-form,\newline $\theta_2$ $(p-1)$-form}}}.
  \end{equation*}
  In the limit $\e \to 0$, this constant tends to a number $c_p(0)$ given by
  \begin{equation*}
    c_p(0)
    = \sup
      \BIGset{
        \dfrac{\int_{I_e}\int_{\Y} |d_{\Y}\theta_2|_{h_e}^2 \dd s\dvol\Y}
              {\int_{I_e}\int_{\Y} |\theta_2|_{h_e}^2 \dd s \dvol\Y}        
             }
             {\text{$\theta_2 \ne 0$ $(p-1)$-form}}.
  \end{equation*}
  This constant is the min-max characterisation of the first
  eigenvalue of the operator $\id \otimes \exactLapl \Y p$ acting on
  $L^2(I_e) \otimes L^2(\Lambda^p(\Y))$, whose spectrum agrees with
  the one of $\exactLapl \Y p$ (see
  e.g.~\cite[Thm.~XIII.34]{reed-simon-4}).  Hence, we have
  $c_p(0)=\exactEV 1 p(\Y)$.
\end{proof}

\subsection{Divergence for co-exact forms}
\label{sec:co-exact}

We will assume for the rest of this section that $n \ge 3$.  If
$\dim\Xe=2$, then the spectrum of exact and co-exact $1$-forms
coincide by duality.  Hence, the spectrum of the Hodge Laplacian is
entirely determined by the spectrum on functions, and hence its
behaviour is covered by the results of
Subsection~\ref{sec:exact_forms}.

We come now to the proof of the divergence of our main theorem, namely
to~\eqref{eq:ev-1-forms} of Theorem~\ref{thm:main}.  We will make use
of Proposition~\ref{prop:gentile-pagliara:95} for $2 \le p \le n-1$
assuming that $H^{p-1}(\Y)=0$ for all $e\in E$.  Then, we will
briefly explain how the same argument works for $p=2$ and non-trivial
cohomology $H^1(\Y)\neq 0$ for some $e\in E$ using
Proposition~\ref{prop:mcgowan}.

Let $H^1(\Y)=0$ for all $e\in E$. Let
\begin{equation*}
  \mathcal{U}_{\e}=\{\Uv\}_{v\in V}\cup\{\Ee\}_{e\in E}
\end{equation*}
be an open cover of $\Xe$, where $\Uv$ is the open $\e$-neighbourhood
of $\Ve$ in $\Xe$, or in other words, a slightly enlarged vertex
neighbourhood $\Ve$ to ensure that $\mathcal U_\e$ is an open cover.

It is easily seen that $\mathcal U_\e$ has intersection up to degree
$2$ only (no three different sets of $\mathcal U_\e$ have non-trivial
intersection).  The intersections of degree $2$ are given by $\Xve=\Uv
\cap \Ee$ which is empty ($e \notin E_v$) or otherwise isometric to
the product $(0,\e) \times\Ye$, hence $\e$-homothetic with the product
$(0,1) \times \Y$ (recall that we enlarged $\Ve$ by an
$\e$-neighbourhood).  Moreover, $\Xve$ is homeomorphic to $(0,1)\times
\Y$, and hence homotopy-equivalent with $\Y$.  In particular,
$H^{p-1}(\Xve)=H^{p-1}(\Y)$.

Recall that $\exactEV j p(\Xe)$ denotes the $j$-th exact $p$-form
eigenvalue on $\Xe$, which equals the $j$-th co-exact
$(p-1)$-eigenvalue $\coexactEV j {p-1}(\Xe)$.  We assume $n \ge 3$, as
in dimension $2$ the Hodge Laplace spectrum is entirely determined by
the scalar case.  Denote by $H^p(\Y)$ the $p$-th cohomology group of the
transversal manifold $\Y$ of the edge neighbourhodd $\Ee$.
\begin{thm}
  \label{thm3}
  Let $\Xe$ be a graph-like manifold of dimension $n \ge 3$ with
  underlying metric graph $X_0$.  Assume that $2 \le p \le n-1$ and
  that the $(p-1)$-th cohomology group of the transversal manifold
  $\Y$ vanishes for all $e \in E$, i.e., $H^{p-1}(\Y)=0$.  Then, the
  first eigenvalue of the Laplacian acting on exact $p$-forms on $\Xe$
  satisfies
  \begin{equation*}
    \exactEV 1 p (\Xe)
    \ge \tau_p \e^{-2},
  \end{equation*} 
  where $\tau_p>0$ is a constant depending only on the building blocks
  $X_v$ and $Y_e$ of the graph-like manifold, the minimal length
  $\ell_0=\min_{e \in E}\{\ell_e,1\}$ and $p$.  In particular, all
  eigenvalues $\exactEV j p (\Xe)$ of exact $p$-forms and all
  eigenvalues $\coexactEV j {p-1} (\Xe)$ of co-exact $(p-1)$-forms
  tend to $\infty$ as $\e \to 0$.
\end{thm}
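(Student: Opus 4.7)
The plan is to apply the McGowan-type lower bound of Proposition~\ref{prop:gentile-pagliara:95} to the open cover $\mathcal U_\eps = \{\Uv\}_{v\in V}\cup\{\Ee\}_{e\in E}$ of $\Xe$ discussed right before the theorem. The combinatorial hypotheses are already in place: this cover has no triple intersections, and its non-empty pairwise intersections are the collars $\Xve = \Uv \cap \Ee$ for $e \in E_v$, each homotopy-equivalent to $\Y$. Consequently $H^{p-1}(\Xve) = H^{p-1}(\Y) = 0$ by the cohomological assumption, so Proposition~\ref{prop:gentile-pagliara:95} is directly applicable.

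First I would estimate the first exact eigenvalues with absolute boundary conditions that enter the right-hand side of Proposition~\ref{prop:gentile-pagliara:95}. The enlarged vertex neighbourhood is $\eps$-homothetic to a fixed manifold, namely $\Uv = \eps U_v$ where $U_v$ is obtained from $\V$ by attaching unit-length collars $[0,1]\times\Y$ at each $e \in E_v$; hence Lemma~\ref{lem1} yields $\exactEVabsBC 1 p(\Uv) = \eps^{-2} \exactEVabsBC 1 p(U_v)$. Proposition~\ref{prop:edge} applied to $\Ee$ gives $\exactEVabsBC 1 p(\Ee) = \eps^{-2} c_p(\eps)$ with $c_p(\eps) \to \exactEV 1 p(\Y) > 0$, uniformly for $\ell_e \geq \ell_0$. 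Finally, since $\Xve$ is $\eps$-homothetic to $(0,1)\times\Y$, the same rescaling argument produces $\exactEVabsBC 1 {p-1}(\Xve) \geq \eps^{-2}\kappa_e$ for some $\kappa_e>0$ depending only on $\Y$.

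Next I would pick a partition of unity $\{\rho_v\}_v \cup \{\rho_e\}_e$ subordinate to $\mathcal U_\eps$ whose transitions are supported inside the collars $\Xve$. Since these collars have length $\eps$ in the longitudinal $s$-direction while the ambient metric $\dd s^2 + \eps^2 h_e$ is unscaled in $s$, a standard cut-off satisfies $\|d \rho\|_\infty = \Err{\eps^{-1}}$. The key point is now that in Proposition~\ref{prop:gentile-pagliara:95} the $\eps^{-2}$ from $\|d\rho\|_\infty^2$ is exactly compensated by the $\eps^{-2}$ in $\exactEVabsBC 1 {p-1}(\Xve)^{-1}$, so that every summand of the denominator takes the form
\[
  \Err{\eps^2} + \sum_{j\in I_i}\Err{1}\cdot \Err{\eps^2} = \Err{\eps^2},
\]
with implicit constants depending only on $\V$, $\Y$, on $\ell_0$, on $p$, $n$ and on the combinatorics of $G$. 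Summing over $i$ and inverting yields $\exactEV 1 p(\Xe) \geq \tau_p \eps^{-2}$; the identification with co-exact $(p-1)$-eigenvalues is then~\eqref{eq:trivial-ev-rel}, and divergence of all higher eigenvalues follows from the ordering.

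The delicate step I foresee is obtaining uniform positivity of $c_p(\eps)$ in Proposition~\ref{prop:edge} as $\ell_e$ varies over the edge set: the convergence $c_p(\eps)\to \exactEV 1 p(\Y)$ is proved there for a fixed $\Y$, but one must inspect its proof to confirm that the approximation is controlled provided $\ell_e \geq \ell_0$, so that the constant $\tau_p$ does not deteriorate. A secondary but routine concern is checking that the absolute boundary conditions on each building block are the correct ones induced by the ambient cover (rather than some relative or mixed condition), which is automatic because the artificial boundaries of $\Uv$, $\Ee$ and $\Xve$ are transversal slices $\{s=\const\}\times\eps\Y$ on which absolute conditions propagate; all remaining work is bookkeeping.
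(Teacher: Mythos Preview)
Your proposal is correct and follows essentially the same approach as the paper's proof: apply Proposition~\ref{prop:gentile-pagliara:95} to the cover $\mathcal U_\eps$, use the $\eps^{-2}$ scaling of the first exact eigenvalues on each building block (Corollary~\ref{prop:vertex}, Proposition~\ref{prop:edge}, and the analogous homothety for $\Xve$), and observe that $\|d\rho\|_\infty^2/\exactEVabsBC 1 {p-1}(\Xve)=\Err 1$ so that every summand in the denominator is $\Err{\eps^2}$. The only minor difference is that the paper records the slightly coarser bound $\|d\rho\|_\infty = \Err{\eps^{-1}+\ell_0^{-1}}$ in order to track the explicit $\ell_0$-dependence needed later in Section~\ref{sec:examples}.
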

\begin{proof}
  We will apply Proposition~\ref{prop:gentile-pagliara:95} to the manifold $\Xe$
  and the cover $\mathcal U_\e$ (having no intersection of degree
  higher than $2$).  The assumptions on the cohomology are fulfilled
  as the $(p-1)$-th cohomology of the intersections of degree $2$ of
  the cover vanishes (as we have already stated above).  We first look
  at the denominator of the right hand side of the estimate in
  Proposition~\ref{prop:gentile-pagliara:95} and obtain in our situation here
  \begin{align*}
     &\sum_{v \in V}
       \Bigg(
         \frac 1 {\exactEV 1 p(\Ve)}+
          \sum_{e \in E_v}
         \bigg(\frac{c_{n,p} \|d \rho_\e\|^2_\infty}
                   {\exactEV 1 {p-1}(\Xve)}+1
         \bigg)
         \bigg(\frac 1 {\exactEV 1 p(\Ve)}
             +\frac 1 {\exactEV 1 p(\Ee)}
        \bigg)
      \Bigg)\\
     &\qquad\qquad + \sum_{e \in E}
       \Bigg(
          \frac 1 {\exactEV 1 p(\Ee)}
         +\sum_{v=\partial_\pm e}
         \bigg(\frac{c_{n,p} \|d \rho_\e\|^2_\infty}
                   {\exactEV 1 {p-1}(\Xve)}+1
         \bigg)
         \bigg(\frac 1 {\exactEV 1 p(\Ve)}
             +\frac 1 {\exactEV 1 p(\Ee)}
        \bigg)
      \Bigg)\\
     =&\sum_{v \in V}
       \Bigg(
         \frac 1 {\exactEV 1 p(\Ve)}
         + \frac {\deg v} {\exactEV 1 p(\Ee)}
         +2 \sum_{e \in E_v}
         \bigg(\frac{c_{n,p} \|d \rho_\e\|^2_\infty}
                   {\exactEV 1 {p-1}(\Xve)}+1
         \bigg)
         \bigg(\frac 1 {\exactEV 1 p(\Ve)}
             +\frac 1 {\exactEV 1 p(\Ee)}
        \bigg)
      \Bigg)\\
     =&
        \e^2 \sum_{v \in V}
        \Bigg(
          \frac 1 {\exactEV 1 p(\V)}
          + \frac {\deg v} {c_p(\e)}
          +2 \sum_{e \in E_v}
          \bigg(\frac{c_{n,p} \e^2 \|d \rho_\e\|^2_\infty}
                    {\exactEV 1 {p-1}(X_{v,e})} +1
          \bigg)
          \bigg(\frac 1 {\exactEV 1 p(\V)}
              +\frac 1 {c_p(\e)}
         \bigg)
       \Bigg)
     =: \e^2 C_p(\e).
  \end{align*}
  Note that the cover $\mathcal U_\e$ is labelled by $v \in V$ and $e
  \in E$, and we have rewritten the sum over the edges as a sum over
  the vertices (leading to the extra term with $\deg v$ and the factor
  $2$) for the second equality.  For the third equality, we have used
  the scaling behaviour of the eigenvalues in equations~\eqref{eq2}
  and~\eqref{eq7}, and a similar one for the $\e$-homothetic overlap
  manifold $\Xve$.
  
  Let us now analyse the constant $C_p(\e)$ as $\e \to 0$. First, we
  have seen in Proposition~\ref{prop:edge} that $c_p(\e)\to \exactEV 1
  p (\Y)>0$.  Moreover, the norm of the derivative of the partition of
  unit norm depends on $\e$ as these functions have to change from $0$
  to $1$ on a length scale of order $\e$ on the vertex neighourhoods
  and on a length scale of order $\ell_0$ on the edge neighourhood,
  hence the derivative is of order $\e^{-1}+\ell_0^{-1}$ and $\e^2 \|d
  \rho_\e\|_\infty^2 = \Err{1}+\Err{(\e/\ell_0)^2}$ (we will need the
  dependency on $\ell_0$ for Section~\ref{sec:examples} when we
  allow $\ell_0$ also to depend on $e$).  In particular, $C_p(\e) \to
  C_p(0)$ as $\e \to 0$ provided $\e/\ell_0$ remains bounded, where
  $C_p(0)$ depends only on some data of the building blocks.
  
  Proposition~\ref{prop:gentile-pagliara:95} now gives
  \begin{equation*}
    \exactEV 1 p (\Xe)
    \geq \frac{2^{-3}}{\e^2 C_p(\e)}
  \end{equation*}
  which proves our assertion.
\end{proof}

Removing the assumption of vanishing cohomology groups we have the
following theorem whose proof follows the line of the previous one
where we use Proposition~\ref{prop:mcgowan} for the estimate of a higher
eigenvalue for exact $p$-forms on $\Xe$.

\begin{thm}
  \label{prop2}
  Let $\Xe$ be a graph-like manifold of dimension $n \ge 3$ with
  underlying metric graph $X_0$. Then the $N$-th eigenvalue of the
  Laplacian acting on exact $p$-forms on $\Xe$ satisfies
  \begin{equation*}
    \exactEV N p (\Xe)
    \ge \wt \tau_p \e^{-2},
  \end{equation*} 
  where $\wt \tau_p>0$ is as before and where
  \begin{equation*}
    N = 1 + \sum_{v\in V}\sum_{e\in E_v} \dim H^{p-1}(\Y)
    = 1 + 2 \sum_{e \in E} \dim H^{p-1}(\Y).
  \end{equation*}
\end{thm}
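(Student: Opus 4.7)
The plan is to follow exactly the structure of the proof of Theorem~\ref{thm3}, replacing Proposition~\ref{prop:gentile-pagliara:95} with its more general variant Proposition~\ref{prop:mcgowan}, which does not require vanishing of the cohomology $H^{p-1}$ on the pairwise intersections but instead provides a divergence bound starting from the $N$-th eigenvalue (with $N$ determined by these cohomologies). I would use the same open cover $\mathcal U_\e=\{\Uv\}_{v\in V}\cup\{\Ee\}_{e\in E}$ as in the proof of Theorem~\ref{thm3}, which has no intersections of degree higher than $2$, and whose pairwise intersections are exactly $\Xve=\Uv\cap\Ee$ for $e\in E_v$.

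The first key step is the computation of $N$. Since $\Xve$ is homeomorphic to $(0,1)\times \Y$ and therefore homotopy equivalent to $\Y$, we have $H^{p-1}(\Xve)\cong H^{p-1}(\Y)$. Indexing the pairwise intersections of $\mathcal U_\e$ by pairs $(v,e)$ with $e\in E_v$ (so that a loop at $v$ contributes twice via $E_v=E_v^-\dcup E_v^+$) gives
\begin{equation*}
 N_1=\sum_{v\in V}\sum_{e\in E_v}\dim H^{p-1}(\Xve)
   =\sum_{v\in V}\sum_{e\in E_v}\dim H^{p-1}(\Y)
   =2\sum_{e\in E}\dim H^{p-1}(\Y),
\end{equation*}
and $N=N_1+1$ as required.

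For the estimate itself, I would substitute the bounds of Corollary~\ref{prop:vertex} and Proposition~\ref{prop:edge} into the denominator appearing in the right-hand side of the inequality in Proposition~\ref{prop:mcgowan}, exactly as in the proof of Theorem~\ref{thm3}. Because $\Xve$ is $\e$-homothetic to $(0,1)\times \Y$, the exact $(p-1)$-form eigenvalues on $\Xve$ also scale as $\e^{-2}$ times $\e$-independent constants. Factoring out $\e^2$ and using that $\|d\rho_\e\|_\infty=\Err{\e^{-1}+\ell_0^{-1}}$ on the partition of unity subordinate to $\mathcal U_\e$, the combined contribution from gradient terms remains bounded as $\e\to 0$, just as in Theorem~\ref{thm3}. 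One therefore arrives at an expression of the form $\e^2 \wt C_p(\e)$ in the denominator, with $\wt C_p(\e)\to \wt C_p(0)<\infty$ depending only on the building blocks $\V$, $\Y$, $\ell_0$ and $p$, and Proposition~\ref{prop:mcgowan} immediately yields $\exactEV N p(\Xe)\ge (\e^2\wt C_p(\e))^{-1}\ge \wt\tau_p\e^{-2}$ for $\e$ small enough.

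The main obstacle is essentially bookkeeping: one has to verify that the index set of pairwise intersections is correctly matched with the combinatorics $E_v=E_v^-\dcup E_v^+$ (so that loops are accounted for twice) and that the gradient-norm scaling $\|d\rho_\e\|_\infty^q$ appearing in Proposition~\ref{prop:mcgowan} still produces a uniformly bounded contribution when multiplied by the $\e^2$ factor coming from the $\Xve$-eigenvalue, which is the only point where a different power from that of Theorem~\ref{thm3} would require additional care. Everything else is a direct transcription of the earlier argument, so the only genuinely new ingredient is the identification of $N$ via the topology of the transversal manifolds.
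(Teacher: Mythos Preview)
Your proposal is correct and matches the paper's approach exactly: the paper itself offers no explicit proof of this theorem beyond the single remark that it ``follows the line of the previous one [Theorem~\ref{thm3}] where we use Proposition~\ref{prop:mcgowan} for the estimate of a higher eigenvalue,'' and your write-up carries out precisely that transcription, including the identification of $N$ via $H^{p-1}(\Xve)\cong H^{p-1}(\Y)$ and the same $\e^2$-factorisation of the denominator. Your caution about the exponent on $\|d\rho_\e\|_\infty$ in Proposition~\ref{prop:mcgowan} is well placed---in the paper it is written as $\|d\rho\|_\infty^p$, which would spoil boundedness for $p\ge 3$; this is almost certainly a typo for $\|d\rho\|_\infty^2$ (consistent with Proposition~\ref{prop:gentile-pagliara:95} and the Gentile--Pagliara and McGowan sources), and with that correction the argument goes through verbatim.
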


\begin{rmk}
  \label{rmk:eigenvalues}
  We point out that the first $N-1$ eigenvalues of the Theorem above
  are strictly positive since we consider the spectrum away from
  zero. However, it is an open question how the eigenvalues behave
  asymptotically as $\e \to 0$.
\end{rmk}

\section{Examples}
\label{sec:examples}
%

Let us discuss some consequences of our asymptotic description of the
Hodge Laplacian spectrum.

\subsection{Hausdorff convergence of the spectrum and spectral gaps}
\label{sec:hausdorff}
Let us first come to Corollary~\ref{cor:main.conv}, the Hausdorff
convergence of the entire Hodge Laplace spectrum.  Let $A,B \subset
\R$ be two compact sets.  The \emph{Hausdorff distance} of $A$ and $B$
is defined as
\begin{equation}
  \label{eq:hausdorff.dist}
  d(A,B) := \max \{ \sup_{a \in A} d(a,B), \sup_{b \in B} d(b,A)\},
  \quad\text{where}\quad
  d(a,B):=\inf_{b \in B} |a-b|.
\end{equation}
A sequence $(A_n)_n$ of compact sets $A_n \subset \R$ \emph{converges
  in Hausdorff distance} to $A_0$ if and only if $d(A_n,A) \to 0$ as
$n \to \infty$.  In particular, $d(A_n,A) \to 0$ if and only if for
all $\lambda_0 \in A_0$ there exists $\lambda_\e \in A_\e$ such that
$|\lambda_0-\lambda_\e| \to 0$ and for all $x \in \R \setminus A_0$
there exists $\eta>0$ such that $[x-\eta,x+\eta] \cap A_\e =
\emptyset$ for $\e$ sufficiently small (see
e.g.~\cite[Prp.~A.1.6]{post:12}).

Corollary~\ref{cor:main.conv} about spectral convergence is now an
immediate consequence of Theorem~\ref{thm:main}, as in a compact
interval $[0,\lambda_0]$, eventually all divergent eigenvalues from
higher forms leave this interval, and the remaining ones converge.

A spectral gap of an operator $\Delta \ge 0$ is a non-empty interval
$(a,b)$ such that
\begin{equation*}
  \spec \Delta \cap (a,b)=\emptyset.
\end{equation*}
Corollary~\ref{cor:main} on spectral gap is again an immediate
consequence of the Hausdorff convergence of
Corollary~\ref{cor:main.conv} under the assumption that the manifold
is transversally trivial (i.e., all transversal manifolds $Y_e$ have
trivial $p$-th cohomology for all $1 \le p \le n-2$).

Examples of manifolds with spectral gaps can be generated in different
ways.  In~\cite{post:03a,lledo-post:08} we constructed (non-compact)
abelian covering manifolds having an arbitrary large number of gaps in
their essential spectrum of the scalar Laplacian, and
in~\cite{acp:09}, we extended the analysis to the Hodge Laplacian on
certain manifolds.

One can construct metric graphs with spectral gaps (and hence
graph-like manifolds with spectral gaps) with a technique called
\emph{graph decoration} that works as follows. We consider a finite
metric graph $X_0$ and a second finite metric graph $\wt X_0$.  For
each $v \in V(X_0)$, let $\wt X_0 \times \{v\}$ be a copy of a finite
metric graph $\wt X_0$.  Fix a vertex $\wt v$ of $\wt X_0$.  Then the
graph decoration of $X_0$ with the graph $\wt X_0$ is the graph
obtained from $X_0$ by identifying the vertex $\wt v$ of $\wt X_0
\times \{v\}$ with $v$.  This decoration opens up a gap in the
spectrum of the Laplacian on function on $X_0$ as described
in~\cite{kuchment:05} and therefore in its $1$-form
Laplacian. Consequently, the associated graph-like manifold has a
spectral gap in its $1$-form Laplacian (and no spectrum away from $0$
for higher forms due to the divergence).

More examples of \emph{families} of graphs and their graph-like
manifolds with spectral gaps are given in
Section~\ref{sec:fam.graphs}.

\subsection{Manifolds with special spectral properties}
\label{sec:spec.prop}

Let $(X_\eps)_{\eps>0}$ be a graph-like manifold constructed from a
metric graph $X_0$ with underlying graph $(V,E,\partial)$.  We assume
that the graph-like manifold is transversally trivial, i.e., all
transversal manifolds $Y_e$ have trivial homlogy $H^p(Y_e)=0$ for all
$1 \le p \le n-2$.  An example of a construction of such graph-like
manifolds is given in Example~\ref{ex:gl-mfd}.

For simplicity, we assume that $X_0$ is equilateral, i.e., all edge
lengths are given by a number $\ell>0$.  (One can easily extend the
results to the case when $c_- \ell \le \ell_e \le c_+ \ell$ for all $e
\in E$ and some constants $c_\pm>0$.)

We write 
\begin{equation}
  \label{eq:notation}
  a_\eps \lesssim b_\eps, \qquad
  a_\eps \gtrsim b_\eps, \qquad
  a_\eps \simeq b_\eps
\end{equation}
if 
\begin{equation}
  \label{eq:notation'}
  \tag{\ref{eq:notation}'}
  a_\eps \le \const_+ b_\eps, \qquad
  a_\eps \ge \const_- b_\eps, \qquad
  \const_- a_\eps \le b_\eps \le \const_+ a_\eps
\end{equation}
for all $\eps>0$ small enough and constants $\const_\pm$
\emph{independent} of $\eps$.

Let us first summarise the asymptotic spectral behaviour of a
graph-like manifold $X_\eps$ and its dependence on the parameters
$\eps$, $\ell$, $|V|$, and $|E|$.  In particular, we have for
$0$-forms, exact $p$-forms and co-exact $(p-1)$-forms and the volume:
\begin{align}
  \label{eq:ev.fcts}
  |\lambda_j^0(X_\eps)-\lambda_j^0(X_0)| &\lesssim
  \dfrac{\eps^{1/2}}{\ell_0}& (\ell_0=\min\{\ell,1\})\\
  \label{eq:ev.forms}
  \exactEV 1 p(X_\eps) = \coexactEV 1 {p-1}(X_\eps)
  &\gtrsim \dfrac 1{\eps^2|E|(1+\eps^2/\ell^2)}\\
  \label{eq:volume}
  \vol X_\eps & \eqsim
  \eps^n|V| + \eps^{n-1} \ell |E|,
\end{align}
where the constants in $\lesssim$ etc.\ depend only on the building
blocks $X_v$ and $Y_e$ of the (unscaled, i.e, $\eps=1$) graph-like
manifold.  Eq.~\eqref{eq:ev.forms} follows from analysing the lower
bound constant $\tau_p$ in Theorem~\ref{thm3} (or
Theorem~\ref{prop2}). We see that the constant $C_p(\e)$ in its proof
is bounded from above by
\begin{equation*}
  C_p(\e) \lesssim \bigl(|V|+|E|(1+\e^2/\ell^2)\bigr)
  \lesssim |E|(1+\e^2/\ell^2)
\end{equation*}
where again the constants in $\lesssim$ depend only on the building
blocks and where we used $|V| \le \sum_{v \in V} \deg v = 2|E|$ for
any graph $G$ (assuming that there are no isolated vertices, i.e.,
vertices of degree $0$).

Let us now assume that $\ell=\ell_\eps=\eps^\gamma$ depends on $\eps$
for some $\gamma \in \R$ (negative $\gamma$'s are not excluded).  In
particular, $X_0$ now also depends on $\eps$, and we write
$\eps^\gamma X_0$ for metric graph with all edge lengths mulitplies by
$\eps^\gamma$.  We have
\begin{itemize}
\item
  \begin{itemize}
  \item For the closeness in~\eqref{eq:ev.fcts} to hold we need
    $\gamma<1/2$, as the error term is of order
    $\eps^{1/2}/\min\{\eps^\gamma,1\}=\eps^{1/2-\max\{\gamma,0\}}$.

  \item For the metric graph eigenvalue, we have
    $\lambda_j(\eps^\gamma X_0)=\eps^{-2\gamma} \lambda_j(X_0)$.
  \item For the metric graph eigenvalue (of order $\eps^{-2\gamma}$)
    to be dominant with respect to the error (of order
    $\eps^{1/2-\max\{\gamma,0\}}$), we need $\gamma>-1/4$.  Hence
    \begin{equation}
      \label{eq:ev.fcts'}
      \tag{\ref{eq:ev.fcts}'}  
      \lambda_j^0(X_\eps)
      \begin{cases}
        \eqsim \eps^{-2\gamma},& -1/4 < \gamma \;({<}\, 1/2),\\
        \lesssim \eps^{1/2}, & \gamma \le -1/4.
      \end{cases}
    \end{equation}
  \end{itemize}
  
\item For the divergence in~\eqref{eq:ev.forms} to hold we need
  $\gamma < 2$.  In particular, we have
  \begin{equation}
    \label{eq:ev.forms'}
    \tag{\ref{eq:ev.forms}'}
    \exactEV 1 p(X_\eps) \gtrsim
    \begin{cases}
      \eps^{-2},& \gamma \le 1,\\
      \eps^{-4+2\gamma},& 1 \le \gamma \;({<}\, 2).
    \end{cases}
  \end{equation}
  
\item For the volume, we have
  \begin{equation}
    \label{eq:volume'}
    \tag{\ref{eq:volume}'}
    \vol X_\eps \eqsim \eps^n|V|+\eps^{n-1+\gamma}|E|
    \eqsim
    \begin{cases}
        \eps^{n-1+\gamma}|E|,&  \gamma \le 1,\\
        \eps^n|V|, & \gamma \ge 1.
      \end{cases}
  \end{equation}
\end{itemize}

\paragraph{Constant volume and arbitrarily large form eigenvalues:}
The following example gives another answer to a question of
Berger~\cite{berger:73}, answered already
in~\cite{gentile-pagliara:95} (see also the references therein for
further contributions).  Their Theorem~1 says that for any closed
manifold $X$ of dimension $n\ge 4$ there exits a metric of volume $1$
such that $\lambda_1^p(X)$ (the non-harmonic spectrum) is arbitrarily
large.  Note that their construction corresponds to a simple graph
with one edge and two vertices.  We have the following result.
\begin{prop}
  \label{prp:ex.g-p}
  On any transversally trivial graph-like manifold of dimension $n \ge
  3$ there exists a family of metrics $\wt g_\eps$ of volume $1$ such
  that for the first eigenvalue on exact $p$-forms we have
  \begin{equation*}
    \exactEV 1 p(X,\wt g_\eps)
    \to \infty \qquad
    \text{as $\eps \to 0$}
  \end{equation*}
  for $2 \le p \le n-1$.  Moreover, the function ($p=0$) and exact
  $1$-form spectrum converges to $0$, i.e.,
  \begin{equation*}
    \lambda_1^0(X,\wt g_\eps) = \exactEV 1 1(X,\wt g_\eps)
    \to 0 \qquad
    \text{as $\eps \to 0$.}
  \end{equation*}
\end{prop}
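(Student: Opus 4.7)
The plan is to take a graph-like metric and rescale it conformally by a single global factor to fix the volume to one; the resulting family of metrics then inherits the divergence of higher-form eigenvalues from Theorem~\ref{thm3} while the rescaling drives the function (and exact $1$-form) spectrum to zero.

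More precisely, I would fix an equilateral underlying graph with edge lengths $\ell=1$ (so $\gamma=0$ in the notation of~\eqref{eq:volume'}), take the graph-like metric $g_\eps$ on $X$ from Section~\ref{subsec:g-like-mfds}, and set $\tilde g_\eps := \tau_\eps^2 g_\eps$ with $\tau_\eps>0$ chosen to normalise the volume. Since $\vol(X,\tau^2g)=\tau^n\vol(X,g)$ and $\vol(X,g_\eps)\eqsim\eps^{n-1}$ by~\eqref{eq:volume'}, we obtain
\begin{equation*}
   \tau_\eps \eqsim \eps^{-(n-1)/n}.
\end{equation*}
By Lemma~\ref{lem1} applied to the whole manifold $X$, all exact-form eigenvalues and the function eigenvalue scale by a factor $\tau_\eps^{-2}\eqsim \eps^{2(n-1)/n}$ under this conformal rescaling.

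For $p=0$ and $p=1$, I would combine Proposition~\ref{prp:exner-post} (for the scalar Laplacian) with Theorem~\ref{thm:exact} (supersymmetry for exact $1$-forms) to get $\lambda_1^0(X,g_\eps)=\exactEV 1 1(X,g_\eps)\to\lambda_1(X_0)$, which is a fixed positive constant since $X_0$ is fixed. Multiplying by $\tau_\eps^{-2}$ yields
\begin{equation*}
   \lambda_1^0(X,\tilde g_\eps)
   =\exactEV 1 1(X,\tilde g_\eps)
   \eqsim \eps^{2(n-1)/n}\to 0.
\end{equation*}
For $2\le p\le n-1$, transversal triviality gives $H^{p-1}(Y_e)=0$ for all $e$, so Theorem~\ref{thm3} applies and yields $\exactEV 1 p(X,g_\eps)\ge \tau_p\eps^{-2}$; rescaling gives
\begin{equation*}
   \exactEV 1 p(X,\tilde g_\eps)
   \ge \tau_p\,\tau_\eps^{-2}\eps^{-2}
   \eqsim \eps^{-2/n}\to \infty,
\end{equation*}
which is the desired divergence.

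There is no serious obstacle; the whole point is that the exponent $2(n-1)/n$ contributed by the volume normalisation is strictly less than $2$ for $n\ge 2$, so it kills the $\eps^{-2}$ blow-up of Theorem~\ref{thm3} only partially (leaving $\eps^{-2/n}\to\infty$) while it sends the bounded quantity $\lambda_1^0(X,g_\eps)$ to zero. The only bookkeeping care required is to check that the transversally trivial graph-like manifold structure (e.g.\ the one provided by Example~\ref{ex:gl-mfd}) is compatible with $X$ having no boundary, so that the scaling lemma and Theorem~\ref{thm3} both apply verbatim.
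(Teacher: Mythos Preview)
Your proposal is correct and follows essentially the same approach as the paper: rescale the graph-like metric $g_\eps$ by the global conformal factor $\vol(X,g_\eps)^{-2/n}$ and read off the asymptotics from~\eqref{eq:ev.forms'} and~\eqref{eq:volume'} for the divergence, and from~\eqref{eq:ev.fcts'} for the convergence to~$0$. The paper carries the parameter $\gamma$ (with $\ell_\eps=\eps^\gamma$) through the computation and then observes that any $-1/4<\gamma<1$ works, whereas you simply take $\gamma=0$; your exponents $\eps^{-2/n}$ and $\eps^{2(n-1)/n}$ are exactly the paper's $\eps^{-2(1-\gamma)/n}$ and $\eps^{2(n-1)(1-\gamma)/n}$ at $\gamma=0$.
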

\begin{proof}
  Let $g_\eps$ be the metric of the graph-like manifold as constructed
  in Section~\ref{subsec:g-like-mfds}.  For any $\gamma<1$, we have
  \begin{equation*}
    \exactEV 1 p(X,g_\eps) (\vol (X,g_\eps))^{2/n}
      \gtrsim \eps^{-2}\eps^{2(n-1+\gamma)/n}
      =\eps^{-2(1-\gamma)/n} \to \infty \qquad
      \text{as $\eps \to 0$}
  \end{equation*}
  by~\eqref{eq:ev.forms'} and \eqref{eq:volume'}.  Set now $\wt g_\eps
  := \vol (X,g_\eps)^{-2/n}g_\eps$, then $\vol(X,\wt g_\eps)=1$ and
  \begin{equation*}
    \exactEV 1 p(X,\wt g_\eps)
    = \vol(X,g_\eps)^{2/n} \exactEV 1 p (X, g_\eps)
    \gtrsim \eps^{-2(1-\gamma)/n} \to \infty \qquad
    \text{as $\eps \to 0$.}
  \end{equation*}
  If $-1/4 < \gamma<1/2$, then the $0$-form (and exact $1$-form)
  eigenvalues of the metric graph and the manifold are close and
  $\lambda_j^0(X,g_\eps) \eqsim \eps^{-2\gamma}$, hence
  \begin{equation*}
    \lambda_j^0(X,\wt g_\eps)
    = \vol (X,g_\eps)^{2/n} \lambda_j^0(X,g_\eps)
    \eqsim \eps^{2(n-1+\gamma)/n}\eps^{-2\gamma}
    =\eps^{2(n-1)(1-\gamma)/n} \to 0.\qedhere
  \end{equation*}
\end{proof}
The transversal length scale (the one of the transversal manifolds
$Y_e$) is $\eps^{(1-\gamma)/n} \to 0$, while the longitudinal length
scale (the one of the metric graph edges $I_e$) is
$\eps^{-(1-1/n)(1-\gamma)} \to \infty$ as $\eps \to 0$.

Unfortunately, we cannot extend the result
of~\cite{gentile-pagliara:95} to the case $n=3$ and $1$-forms (as the
exact $1$-form spectrum \emph{converges}).

\subsection{Families of manifolds with special spectral properties}
\label{sec:fam.graphs}

Let us now consider families of graph-like manifolds, constructed
according to a sequence of graphs $\{G^i\}_{i \in \N}$.  We assume for
simplicity that the vertex degree is uniformly bounded, say by $k_0
\in \N$.  Then we have (if there are no isolated vertices)
\begin{equation*}
  \abs{V(G^i)} \le  \sum_{v \in V(G^i)} \deg_{G^i} v
  = 2 \abs{E(G^i)} \le 2k_0 \abs{V(G^i)},
\end{equation*}
i.e., $\nu_i:=\abs{V(G^i)} \simeq \abs{E(G^i)}$ as $i \to \infty$.  We first
start with a general statement about the spectral convergence. Assume
that $\{G^i\}_{i \in \N}$ is a family of discrete graphs and that
$\{X_0^i\}_{i \in \N}$ is the family of associated equilateral metric
graphs, each graph $X_0^i$ having edge lengths equal to $\ell_i$.

Assume now that we construct accordingly a family of graph-like
manifolds $\{\Xei\}_{i\in \N}$ where the building blocks $X_v$ and
$Y_e$ are isometric to a given number of prototypes (independent of
$i$), such that $Y_e$ all have trivial cohomology for all $1 \le p \le
n-2$ (see Example~\ref{ex:gl-mfd}), so that all graph-like manifolds
$\Xei$ are transversally trivial and hence our
estimates~\eqref{eq:ev.fcts}--\eqref{eq:volume} are uniform in the
building blocks and~\eqref{eq:ev.forms} holds for the \emph{first}
exact eigenvalue.  We call such a family of graph-like manifolds
\emph{uniform}.

Let us now specify $\eps_i$ and $\ell_i$ in dependence of the number
of vertices $\nu_i$ of $G^i$. We assume that
\newcommand{\myalpha}{\alpha} 
\newcommand{\mybeta}{\beta} 
\begin{equation}
  \label{eq:eps.ell.nu}
  \eps_i = \nu_i^{-\myalpha}
  \qquad\text{and}\qquad
  \ell_i = \nu_i^{-\mybeta}
\end{equation}
for some $\myalpha>0$ and $\mybeta \in \R$ (negative values for
$\mybeta$ are not excluded).  In particular, $X_0^i$ now also depends
on $\eps$, and we write $\nu_i^{-\mybeta} X_0^i$ for the metric graph
$X_0^i$ with all edge lengths being $\nu_i^{-\mybeta}$.
\begin{itemize}
\item
  \begin{itemize}
  \item For the $0$-form eigenvalue convergence in~\eqref{eq:ev.fcts}
    to hold we need $\max\{\mybeta,0\} < \myalpha/2$, as the error
    term is of order $\eps_i^{1/2}/\min\{\ell_i,1\}=\nu_i^{-\myalpha/2
      + \max\{\mybeta,0\}}$.

  \item For the metric graph eigenvalue, we have
    $\lambda_j(\nu_i^{-\mybeta} X_0^i)=\nu_i^{2\mybeta} \lambda_j(X_0^i)$.
  \item For the metric graph eigenvalue (of order $\nu_i^{2\mybeta}$)
    to be dominant with respect to the error (of order
    $\nu_i^{-\myalpha/2 + \max\{\mybeta,0\}}$), we need $\mybeta \ge -\myalpha/2$,
    $\mybeta \ge 0$ or $\mybeta \ge -\myalpha/4$, $\mybeta \le 0$.  Hence
    \begin{equation}
      \label{eq:ev.fcts''}
      \tag{\ref{eq:ev.fcts}''}  
      \lambda_j^0(X_\eps^i)
      \begin{cases}
        \eqsim \nu_i^{2\mybeta} \lambda_j(X_0^i),& 
                               (\mybeta \ge -\myalpha/2, \mybeta \ge 0) 
                                   \text{ or }
                               (\mybeta \ge -\myalpha/4, \mybeta \le 0),\\
        \lesssim \nu_i^{-\myalpha/2}, & \text{otherwise.}
      \end{cases}
    \end{equation}
  \end{itemize}
\begin{figure}[h]
  \centering
  \input{alpha-beta-diagram.pstex_t}
  \caption{
    \label{fig:alpha-beta}
    From left to right: Parameter regions\newline %
    $\bullet$~where the $0$-form eigenvalue convergence
    in~\eqref{eq:ev.fcts} holds ($\max\{\mybeta,0\} < \myalpha/2$)
    \newline %
    $\bullet$~where $\lambda_j^0(X_{\eps_i}^i) \eqsim \nu_i^{2\mybeta}
    \lambda_j(X_0^i)$ ($\mybeta > -\myalpha/2, \mybeta \ge 0$ or
    $\mybeta > -\myalpha/4, \mybeta \le 0$) \newline %
    $\bullet$~where $\exactEV 1 p(X_{\eps_i}^i)$ diverges. ($\alpha >1/2,
    \alpha \ge \beta$ or $4\alpha-2\beta-1>0$, $\alpha \le \beta$).
    \newline %
    \emph{Most left, dark grey:} region where all eigenvalues
    diverge. \emph{Light grey:} region where form eigenvalues diverge,
    function eigenvalues converge to $0$. \emph{Dotted line:} volume
    is constant; above: volume tends to $0$; below: volume tends to
    $\infty$.  \newline %
    \emph{Second row:} Regions of divergence of rescaled $0$-form
    eigenvalue: very dark grey for $n=4$ and very dark, darker grey
    for $n=3$ and $n=2$; dashed lines are $\beta=\alpha-1/(n-1)$ for
    $n \in \{2,3,4\}$.}
\end{figure}
  
\item For the divergence in~\eqref{eq:ev.forms} to hold we need
  $\myalpha>1/2$ (resp.\ $2\myalpha>1+\mybeta$).  In particular, we
  have
  \begin{equation}
    \label{eq:ev.forms''}
    \tag{\ref{eq:ev.forms}''}
    \exactEV 1 p(X_{\eps_i}^i) \gtrsim
    \begin{cases}
      \nu_i^{2\myalpha-1},& \myalpha \ge \mybeta,\\
      \nu_i^{4\myalpha-2\mybeta-1},& \myalpha \le \mybeta
    \end{cases}
  \end{equation}
  for $2 \le p \le n-1$.
\item For the volume, we have
  \begin{equation}
    \label{eq:volume''}
    \tag{\ref{eq:volume}''}
    \vol X_\eps \eqsim \nu_i^{-n\myalpha + 1} +\nu_i^{-(n-1)\myalpha-\mybeta+1}
    \eqsim
    \begin{cases}
        \nu_i^{-(n-1)\myalpha-\mybeta+1},&  \myalpha \ge \mybeta,\\
        \nu_i^{-n\myalpha + 1}, & \myalpha \le \mybeta.
      \end{cases}
  \end{equation}
\end{itemize}

\paragraph{Ramanujan graphs:}
Let us now conclude from the above and the diagram some examples. We
assume that the underlying sequence of metric graphs $X_0^i$ (with all
lengths equal to $1$) is Ramanujan, i.e., the (metric) graph
Laplacians have a common spectral gap $(0,h)$.  If we choose
$(\alpha,\beta)$ from the dark grey area ($\alpha > 1/2$, $\beta \ge
0$, $\beta < \alpha/2$) we have:
\begin{prop}
  \label{prp:ramanujan.ex1}
  There is a sequence of graph-like manifolds $(X_{\eps_i}^i)_i$ with
  underlying Ramanujan graphs $G^i$ with $\nu_i=\abs{V(G^i)}$ many
  vertices, such that the Hodge Laplacian of all degrees has an
  arbitrarily large spectral gap, i.e., there exists $h_i \eqsim
  \nu_i^{\min\{2\mybeta,2\myalpha-1\}}\to \infty$ such that
  \begin{equation*}
    \spec{\Delta_{X_{\eps_i}^i}^\bullet} \cap (0, h_i) = \emptyset
  \end{equation*}
  and such that the volume shrinks to $0$, more precisely, $\vol
  X_{\eps_i}^i \eqsim \nu_i^{-(n-1)\myalpha-\mybeta+1}$.

  In particular, if $\mybeta=0$, then there exists a common spectral
  gap $(0,h)$ of the Hodge Laplacian.  If, additionally, $n=3$, then
  the volume decay can be made arbitarily small as $\alpha \searrow 1/2$,
  i.e., of order $\nu_i^{-2\alpha+1}$.
\end{prop}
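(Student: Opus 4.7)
The plan is to combine three ingredients established earlier in the paper: the Ramanujan spectral gap for the family of underlying discrete (hence metric) graphs, the convergence of $0$-form eigenvalues from graph to manifold (Proposition~\ref{prp:exner-post}) with its explicit error term, and the divergence of the exact $p$-form eigenvalues for $p\ge 2$ (Theorem~\ref{thm3}). The choice of parameters $(\myalpha,\mybeta)$ in the dark grey region of Figure~\ref{fig:alpha-beta} (i.e.\ $\myalpha>1/2$, $\mybeta\ge 0$, $\mybeta<\myalpha/2$) is precisely the intersection of the regimes in which all three ingredients cooperate; I would verify each of them in turn.

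First, I would pick for each $k\ge 3$ a prime-power value and an infinite family $\{G^i\}_{i\in\N}$ of $k$-regular Ramanujan graphs with $\nu_i:=\abs{V(G^i)}\to\infty$ (such families exist by \cite{lps:88,margulis:88,morgenstern:94,mss:pre14}), and for each $G^i$ the equilateral metric graph $X_0^i$ of edge length $\ell_i=\nu_i^{-\mybeta}$. By \eqref{eq:ramanujan-gap} applied to the rescaled graph and by Remark~\ref{rem:supersymmetry}, both $\Delta^0_{X_0^i}$ and $\Delta^1_{X_0^i}$ have the common spectral gap $(0,h_k\nu_i^{2\mybeta})$ with $h_k>0$ the Ramanujan constant. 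Next, I would build on each $G^i$ a transversally trivial graph-like manifold $X^i_{\eps_i}$ following Example~\ref{ex:gl-mfd}, using building blocks $X_v,Y_e$ drawn from a fixed finite list of prototypes independent of $i$ and shrinking parameter $\eps_i=\nu_i^{-\myalpha}$. This ensures the estimates of Section~\ref{sec:fam.graphs} hold \emph{uniformly} in $i$.

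The heart of the argument is to show that the $0$-form and exact $1$-form spectrum of $X_{\eps_i}^i$ below $h_i:=\tfrac12\min\{h_k\nu_i^{2\mybeta},\tau_p\nu_i^{2\myalpha-1}\}$ is empty (apart from the eigenvalue $0$). For the function side I use Proposition~\ref{prp:exner-post} with $\ell_0=\min\{\ell_i,1\}$, which gives $|\lambda_j^0(X_{\eps_i}^i)-\lambda_j^0(X_0^i)|=\Err{\nu_i^{-\myalpha/2+\max\{\mybeta,0\}}}$; the condition $\max\{\mybeta,0\}<\myalpha/2$ guarantees that this error is of strictly smaller order than the gap width $\nu_i^{2\mybeta}$, so the Ramanujan gap survives the perturbation (with constants depending only on the fixed list of building blocks). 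By the supersymmetry Theorem~\ref{thm:exact}, the exact $1$-form spectrum is controlled in the same way. For $2\le p\le n-1$, Theorem~\ref{thm3} applied to $X_{\eps_i}^i$ gives $\exactEV 1 p(X_{\eps_i}^i)\gtrsim \nu_i^{2\myalpha-1}$ (using $|E(G^i)|\eqsim\nu_i$ and $\eps_i/\ell_i\le 1$, which is the regime $\myalpha\ge\mybeta$), and by the duality~\eqref{eq:trivial-ev-rel} the same divergence transfers to co-exact $(p-1)$-forms, hence to the full Hodge Laplacian in all remaining degrees. Combining everything gives $\spec{\Delta^\bullet_{X^i_{\eps_i}}}\cap(0,h_i)=\emptyset$ with $h_i\eqsim\nu_i^{\min\{2\mybeta,2\myalpha-1\}}\to\infty$, and the volume asymptotics follow directly from~\eqref{eq:volume''}.

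Finally I would specialise: for $\mybeta=0$ the metric graphs $X_0^i$ all have edge length $1$, so the Ramanujan gap $(0,h_k)$ is a fixed interval, and choosing $\myalpha>1/2$ yields a common spectral gap $(0,h)$ with $h=\tfrac12\min\{h_k,\tau_p\}$; the volume estimate reduces to $\vol X^i_{\eps_i}\eqsim\nu_i^{-(n-1)\myalpha+1}$. For $n=3$ one may push $\myalpha$ arbitrarily close to $1/2$ from above, giving volume decay of order $\nu_i^{-2\myalpha+1}$ with exponent as close to $0$ as desired. The main obstacle, in my view, is the bookkeeping needed to keep all error constants genuinely uniform in $i$: this requires arguing that the bounds in Proposition~\ref{prp:exner-post} and in Theorem~\ref{thm3} depend only on the finite list of prototypes $X_v,Y_e$ and on the uniformly bounded vertex degree $k$, and not on $i$ itself — a point worth stating explicitly and invoking the \emph{uniformity} of the family as defined in Section~\ref{sec:fam.graphs}.
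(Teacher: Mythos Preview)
Your proposal is correct and follows essentially the same approach as the paper's own proof, which is extremely terse: it simply cites the eigenvalue asymptotics established in~\eqref{eq:ev.fcts''}--\eqref{eq:volume''} together with the Ramanujan lower bound $\lambda_1(X_0^i)\ge h$ for the unit-length metric graphs. You have unpacked exactly these ingredients---the Ramanujan gap, Proposition~\ref{prp:exner-post} for the $0$-form convergence, Theorem~\ref{thm:exact} for exact $1$-forms, Theorem~\ref{thm3} for exact $p$-forms with $2\le p\le n-1$, and the volume formula---and your attention to the uniformity of the constants in $i$ (via the fixed finite list of prototype building blocks) makes explicit a point the paper leaves implicit in its definition of a \emph{uniform} family.
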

\begin{proof}
  The proof follows from the above considerations of eigenvalue
  asymptotics.  Note that for a sequence of Ramanujan graphs, there
  exists $h>0$ such that the first non-zero eigenvalue of the metric
  graph Laplacian with unit edge length fulfils $\lambda_1(X_0^i) \ge
  h$ for all $i$, hence we can conclude divergence from the first line
  of~\eqref{eq:ev.fcts''}.
\end{proof}
Note that the length scale of the underlying metric graphs is of order
$\nu_i^{-\beta}$, but the radius is of order $\eps_i=\nu_i^{-\alpha}$,
which is smaller; hence the injectivity radius of $X_{\eps_i}^i$ is
of order $\eps_i=\nu_i^{-\alpha}$, and the curvature is of order
$\eps_i^{-2}=\nu_i^{2\alpha}$. 

\paragraph{Rescaling the metric:}
Let us now rescale the metric to have fixed volume (i.e., set $\wt g_i
:= (\vol (X_{\eps_i}^i, g_{\eps_i}))^{-2/n} g_{\eps_i}$) and consider
$\wt X^i:=(X_{\eps_i}^i,\wt g_i)$).  Then the latter manifold has
volume $1$.  Unfortunately, we cannot have divergence at all degrees
at the same time; e.g.\ for $n=3$ the conditions are
$\beta>\alpha-1/2$ for divergence of the eigenvalues of degree $0$,
while $\beta<\alpha-1/2$ is needed for divergence of exact $2$-forms.
But we can have divergence of $0$-forms and higher degree forms
separately.
\begin{figure}[h]
  \centering
  \input{alpha-beta-diagram0.pstex_t}
  \caption{
    \label{fig:alpha-beta0}
    From left to right: Parameter regions for rescaled metric (volume
    is $1$)\newline %
    $\bullet$~where the $0$-form eigenvalue $\lambda_1^0(\wt
    X_{\eps_i}^i)$ diverges; \newline %
    $\bullet$~where $\exactEV 1 p(\wt X_{\eps_i}^i)$ diverges ($2 \le
    p \le n-1$).}
\end{figure}
\begin{cor}
  \label{cor:div.fcts}
  For all $n\ge 2$ there exists a family of graph-like manifolds $\wt
  X^i$ of volume $1$ with underlying Ramanujan graphs such that the
  first non-zero eigenvalue on functions ($0$-forms) diverges.
\end{cor}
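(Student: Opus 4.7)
The plan is to build a family $(X^i_{\eps_i})$ of transversally trivial graph-like manifolds over an infinite family of $k$-regular Ramanujan graphs $(G^i)$ (cf.\ Section~\ref{subsec:ramanujan}), shrink only the transversal scale while keeping the edge lengths fixed, then rescale the metric to unit volume and verify that the first non-zero eigenvalue on functions blows up.  Concretely, using the notation of~\eqref{eq:eps.ell.nu}, I would take $\mybeta=0$ (so $\ell_i=1$ for all $i$) and $\eps_i=\nu_i^{-\myalpha}$ for a parameter $\myalpha>0$ to be fixed, and set
\[
  \wt g_i := \bigl(\vol(X^i_{\eps_i},g_{\eps_i})\bigr)^{-2/n} g_{\eps_i},
  \qquad
  \wt X^i := (X^i_{\eps_i},\wt g_i),
\]
so that $\vol \wt X^i=1$.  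By the scale invariance behind~\eqref{eq:lambda1.est},
\[
  \lambda_1^0(\wt X^i)
  = \bigl(\vol(X^i_{\eps_i},g_{\eps_i})\bigr)^{2/n}\,\lambda_1^0(X^i_{\eps_i}).
\]

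Next I would combine the two asymptotic ingredients already assembled in the section.  From the first line of~\eqref{eq:ev.fcts''} with $\mybeta=0$, Proposition~\ref{prp:exner-post} gives $\lambda_1^0(X^i_{\eps_i}) \to \lambda_1(X^i_0)$; since $G^i$ is Ramanujan, the metric graph eigenvalues are uniformly bounded below by a constant $h>0$ (cf.~\eqref{eq:ramanujan-gap}), so $\lambda_1^0(X^i_{\eps_i}) \gtrsim h$ uniformly in $i$.  From~\eqref{eq:volume''} with $\mybeta=0$ and $\myalpha \ge \mybeta$,
\[
  \vol(X^i_{\eps_i},g_{\eps_i}) \eqsim \nu_i^{1-(n-1)\myalpha}.
\]
Multiplying the two asymptotics yields
\[
  \lambda_1^0(\wt X^i) \gtrsim \nu_i^{2(1-(n-1)\myalpha)/n},
\]
which tends to $+\infty$ precisely when $\myalpha < 1/(n-1)$.

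The final step is simply to exhibit such a parameter: any $\myalpha\in(0,1/(n-1))$ simultaneously respects the hypotheses of~\eqref{eq:ev.fcts''} (since $\mybeta=0<\myalpha/2$) and forces volume divergence, and this interval is non-empty for every $n\ge 2$.  There is no serious obstacle here, as the heavy machinery is already encoded in~\eqref{eq:ev.fcts''}--\eqref{eq:volume''}; the only delicate point is balancing two competing requirements, namely that the tubes be thin enough to make the volume factor blow up after rescaling, yet not so thin (relative to $\ell_i=1$) that the approximation of the $0$-form spectrum by the Ramanujan spectrum of $X^i_0$ breaks down.  The critical exponent $1/(n-1)$ emerges naturally as the borderline visible in Figure~\ref{fig:alpha-beta0}, and since $1/(n-1)>0$ for all $n\ge 2$, the corollary follows uniformly in the dimension.
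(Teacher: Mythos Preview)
Your proof is correct and follows essentially the same approach as the paper: rescale a uniform family of graph-like manifolds over Ramanujan graphs to unit volume and use~\eqref{eq:ev.fcts''} together with~\eqref{eq:volume''} to see that the rescaled first function eigenvalue behaves like $\nu_i^{2(1-(n-1)\alpha)/n}$, which diverges for $\alpha<1/(n-1)$. The only difference is that the paper keeps both parameters $(\alpha,\beta)$ free, identifies the full admissible triangle with vertices $(0,0)$, $(4,-1)/(5(n-1))$, $(2,1)/(n-1)$, and extracts the near-optimal rate $\lambda_1(\wt X^i)\eqsim\nu_i^{2/n-\delta}$ by letting $(\alpha,\beta)\to(0,0)$; your specialisation $\beta=0$ lies on the segment of that triangle where the computation is cleanest and already yields the same near-optimal exponent as $\alpha\to 0^+$, so nothing is lost for the corollary as stated.
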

\begin{proof}
  The rescaling factor $\tau_i = (\vol (X_{\eps_i}^i,
  g_{\eps_i}))^{-1/n}$ is of order
  $\nu_i^{(1-1/n)\alpha+\beta/n-1/n}$.  The rescaled eigenvalue on
  functions fulfils
  \begin{equation}
    \label{eq:scale.fct}
    \lambda_1(\wt X^i)
    =
    \tau_i^{-2}\lambda_1(X_{\eps_i}^i)
    \eqsim \tau_i^{-2}\nu_i^{2\beta}\lambda_1(X_0^i)
    \eqsim \nu_i^{2/n-2(1-1/n)(\alpha-\beta)}\lambda_1(X_0^i)
  \end{equation}
  and the latter exponent is positive if and only if $\beta > \alpha-1/(n-1)$.
  The allowed parameters $(\alpha,\beta)$ lie inside the triangle
  $(0,0)$, $(4,-1)/(5(n-1))$, $(2,1)/(n-1)$ such that $\lambda_1(\wt
  X^i) \eqsim \nu_i^{2/n-\delta}$ (see the differently grey coloured
  regions in Figure~\ref{fig:alpha-beta0} (right) for different $n$).
  The difference $\beta-\alpha$ approaches its maximum on this
  triangle at the vertex $(0,0)$.  Hence for any $\delta>0$ there
  exists $(\alpha,\beta)$ inside the triangle such that $\lambda_1(\wt
  X^i) \eqsim \nu_i^{2/n-\delta}$.
\end{proof}
In particular, for $n=2$ we have:
\begin{cor}
  \label{cor:div.fcts.n=2}
  There exists a sequence of graph-like surfaces $\wt X^i$ of area $1$
  and genus $\gamma(\wt X^i)$ with underlying Ramanujan graphs such
  that the first non-zero eigenvalue on functions diverges.  Moreover,
  for any $\delta>0$ there exists a sequence $(\wt X^i)_i$ such that
  \begin{equation*}
    \lambda_1(\wt X^i)
    \eqsim \gamma(\wt X^i)^{1-\delta},
  \end{equation*}
  i.e., the bound in~\eqref{eq:kappa.n=2} is asymptotically almost
  optimal.
\end{cor}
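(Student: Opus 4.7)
The plan is to specialise Corollary~\ref{cor:div.fcts} to dimension $n=2$ and then relate the vertex count $\nu_i = |V(G^i)|$ of the underlying Ramanujan graph to the genus of the resulting surface.

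First I would fix an integer $k \ge 3$ and a family $\{G^i\}_i$ of $k$-regular Ramanujan graphs with $\nu_i \to \infty$, the existence of which is guaranteed by the constructions cited in Section~\ref{subsec:ramanujan}. Since $n=2$, the transversal manifolds are forced to be circles, so I take $Y_e = \Sphere^1$ of unit length. As unscaled vertex building block $X_v$ I choose a $2$-sphere with $\deg v = k$ open discs removed, equipped with any smooth metric whose boundary components all have length $1$. The resulting unscaled graph-like surface $X_1^i$ is then compact and orientable, and the family $\wt X^i = (X_{\eps_i}^i, \wt g_i)$ obtained by the area-normalisation of Corollary~\ref{cor:div.fcts}, with parameters $\eps_i = \nu_i^{-\alpha}$ and $\ell_i = \nu_i^{-\beta}$ to be fixed later, is a uniform family of transversally trivial graph-like manifolds in the sense of Section~\ref{sec:fam.graphs}.

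Next I would compute the genus from the Euler characteristic by inclusion--exclusion on the decomposition~\eqref{decomp}. Since $\chi(X_v) = 2-k$, $\chi(I_e \times \Sphere^1) = 0$ and $\chi(\Sphere^1) = 0$, one obtains
\begin{equation*}
  \chi(\wt X^i) = \sum_{v \in V(G^i)} \chi(X_v) = (2-k)\,\nu_i,
\end{equation*}
so that $\gamma(\wt X^i) = 1 + (k-2)\nu_i/2 \eqsim \nu_i$; the conformal area-rescaling does not affect this topological invariant.

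Finally I would invoke Corollary~\ref{cor:div.fcts}: for any prescribed $\delta>0$ one can choose $(\alpha,\beta)$ inside the admissible triangle with vertices $(0,0)$, $(4/5,-1/5)$, $(2,1)$ sufficiently close to $(0,0)$ so that the exponent $2/n - 2(n-1)(\alpha-\beta)/n$ there becomes $1-\delta$, giving $\lambda_1(\wt X^i) \eqsim \nu_i^{1-\delta}$. Combining with $\gamma(\wt X^i) \eqsim \nu_i$ then yields $\lambda_1(\wt X^i) \eqsim \gamma(\wt X^i)^{1-\delta}$, which is the asserted asymptotic near-optimality of the Yang--Yau bound~\eqref{eq:kappa.n=2}. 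The main obstacle is purely bookkeeping: one must verify that the $\eqsim$-constants remain uniform in $i$, which is guaranteed by the common prototype building blocks $X_v$, $Y_e$ together with the uniform metric-graph spectral gap $\lambda_1(X_0^i) \ge h_k > 0$ coming from the Ramanujan property~\eqref{eq:ramanujan-gap}; no further analytic input beyond Corollary~\ref{cor:div.fcts} is needed.
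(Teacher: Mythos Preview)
Your proposal is correct and follows essentially the same route as the paper: the same choice of building blocks $Y_e=\Sphere^1$ and $X_v$ a sphere with $k$ discs removed, the same reduction to Corollary~\ref{cor:div.fcts} with $(\alpha,\beta)$ chosen so that the exponent equals $1-\delta$, and the same identification $\gamma(\wt X^i)\eqsim\nu_i$. The only cosmetic difference is that you compute $\chi(\wt X^i)=(2-k)\nu_i$ by inclusion--exclusion on the building blocks, whereas the paper writes the genus as $1-\chi(G^i)=1-\abs{V(G^i)}+\abs{E(G^i)}$; both yield $\gamma(\wt X^i)=1+(k/2-1)\nu_i$.
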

\begin{proof}
  We have to choose $Y_e=\Sphere^1$ here, moreover we let the vertex
  neighbourhood be a sphere with $k$ discs removed (as in
  Example~\ref{ex:gl-mfd}).  In this case, the genus of the surface
  $\wt X^i$ is given by $1-\chi(G^i)$ where $\chi(G^i)$ is the Euler
  characteristic of the graph $G^i$, and hence
  \begin{equation*}
    \gamma(\wt X^i)
    = 1 - \abs{V(G^i)}+\abs{E(G^i)}
    = 1 - \nu_i + \frac k2 \nu _i
    = 1+\Bigl(\frac k2 - 1\Bigr) \nu_i \to \infty
  \end{equation*}
  as $i \to \infty$ as $k \ge 3$ for a Ramanujan graph.  In
  particular, $\gamma(\wt X^i) \eqsim \nu_i$.
\end{proof}

\paragraph{Arbitrarily large differential form spectrum, constant
  volume and arbitrary graphs:}
Let us now assume that $(G^i)$ is any sequence of graphs with
$\nu_i=\abs{V(G^i)}\to \infty$ as $i \to \infty$ and with degrees
bounded by $k$. As we want the form spectrum to diverge, we do not
need that the underlying graphs are Ramanujan.
\begin{prop}
  \label{prp:div.forms}
  For all $n\ge 3$ there exists a family of graph-like manifolds $\wt
  X^i$ of volume $1$ such that the first eigenvalue on exact $p$-forms
  diverges ($2 \le p \le n-1$).  Moreover, the first non-zero eigenvalue on
  functions converges.
\end{prop}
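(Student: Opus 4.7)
The plan is to take $(G^i)_i$ to be an arbitrary sequence of finite simple graphs with $\deg v \le k_0$ for all $v$ and $\nu_i:=|V(G^i)|\to\infty$, and to construct $(\wt X^i)_i$ as follows. Keep all edge lengths of the associated metric graphs $X_0^i$ equal to $1$ (so $\beta=0$ in the notation of Subsection~\ref{sec:fam.graphs}), and build a uniform transversally trivial graph-like manifold $X_{\eps_i}^i$ using $\Y=\Sphere^{n-1}$ (which has trivial cohomology in degrees $1,\dots,n-2$ because $n\ge 3$) and $X_v=\Sphere^n$ minus $\deg v$ open balls, as in Example~\ref{ex:gl-mfd}. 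Bounded degree ensures only finitely many isometry types of building blocks, so all constants below are uniform in $i$. Finally, pick $\alpha>(n-2)/2$, set $\eps_i=\nu_i^{-\alpha}$, and rescale by $\tau_i^2=(\vol X_{\eps_i}^i)^{-2/n}$ to obtain $\wt X^i=(X_{\eps_i}^i,\tau_i^2 g_{\eps_i})$, which has volume $1$.

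For the divergence of exact $p$-form eigenvalues ($2\le p\le n-1$), I would apply Theorem~\ref{thm3} (legitimate thanks to transversal triviality): since $\eps_i\le 1=\ell_i$, the discussion around~\eqref{eq:ev.forms} and the bound $|E(G^i)|\eqsim\nu_i$ (using connectedness and bounded degree) give
\[
  \exactEV 1 p(X_{\eps_i}^i)
  \gtrsim \frac{1}{\eps_i^2\,|E(G^i)|}
  \eqsim \nu_i^{2\alpha-1}.
\]
From~\eqref{eq:volume}, $\vol X_{\eps_i}^i \eqsim \nu_i^{1-(n-1)\alpha}$, so $\tau_i^{-2}\eqsim\nu_i^{(2-2(n-1)\alpha)/n}$. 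By the scaling behaviour in Lemma~\ref{lem1},
\[
  \exactEV 1 p(\wt X^i)
  =\tau_i^{-2}\,\exactEV 1 p(X_{\eps_i}^i)
  \gtrsim \nu_i^{(2\alpha-n+2)/n}
  \longrightarrow\infty,
\]
as the exponent is strictly positive precisely when $\alpha>(n-2)/2$.

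For the convergence of $\lambda_1(\wt X^i)$, I would apply Proposition~\ref{prp:exner-post} with $\ell_0=1$: the error term $\Err{\eps_i^{1/2}}=\Err{\nu_i^{-\alpha/2}}$ vanishes, so $|\lambda_1(X_{\eps_i}^i)-\lambda_1(X_0^i)|\to 0$. Moreover $\lambda_1(X_0^i)$ is uniformly bounded: the spectral relation~\eqref{spectra_rel} with $\ell_0=1$ gives $\lambda_1(X_0^i)=\arccos^2(1-\mu_1(G^i))\le\pi^2$, where $\mu_1(G^i)\in(0,2]$ is the smallest non-zero eigenvalue of $\Delta_{G^i}$. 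Therefore $\lambda_1(X_{\eps_i}^i)$ is uniformly bounded, and Lemma~\ref{lem1} gives
\[
  \lambda_1(\wt X^i)
  =\tau_i^{-2}\,\lambda_1(X_{\eps_i}^i)
  \lesssim \nu_i^{(2-2(n-1)\alpha)/n}
  \longrightarrow 0,
\]
since $(n-1)\alpha>1$ for our choice of $\alpha$ (when $n=3$ one has $2\alpha>1$, and when $n\ge 4$ one has $(n-1)\alpha>n-1\ge 3$).

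The main book-keeping difficulty is balancing the three competing exponents so that the $\eps_i^{-2}$ gain in the form estimate simultaneously dominates the $|E|$-factor from Theorem~\ref{thm3} and survives the volume normalisation $\tau_i^{-2}\to 0$, while the function eigenvalue (being controlled by a fixed metric-graph quantity) is completely killed by the same volume rescaling; keeping the edge length fixed at $\ell_i=1$ and letting only the transversal scale $\eps_i$ shrink achieves exactly this, and all other conditions reduce to the single inequality $\alpha>(n-2)/2$.
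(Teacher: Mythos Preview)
Your proof is correct and follows essentially the same approach as the paper's: apply the lower bound of Theorem~\ref{thm3} to get $\exactEV 1 p(X_{\eps_i}^i)\gtrsim \nu_i^{2\alpha-1}$, compute the volume from~\eqref{eq:volume}, rescale, and check that the resulting exponent $(2\alpha-n+2)/n$ is positive; then use the spectral relation~\eqref{spectra_rel} to bound $\lambda_1(X_0^i)\le\pi^2$ and observe that the volume rescaling kills it. The only difference is that the paper keeps both parameters $(\alpha,\beta)$ free and identifies the full admissible region $\beta<\alpha-(n/2-1)$, whereas you fix $\beta=0$ from the outset; your condition $\alpha>(n-2)/2$ is exactly the intersection of that region with the $\alpha$-axis, so this is a harmless specialization that slightly streamlines the bookkeeping.
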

\begin{proof}
  The rescaled eigenvalue on $p$-forms fulfils
  \begin{equation*}
    \exactEV 1 p (\wt X^i)
    =
    \tau_i^{-2} \exactEV 1 p (X_{\eps_i}^i)
    \gtrsim \tau_i^{-2}\nu_i^{2\alpha-1}
    \eqsim \nu_i^{2(\alpha-\beta+1)/n-1}
  \end{equation*}
  (as $\alpha \ge \beta$, see~\eqref{eq:ev.forms''}) and the latter
  exponent is positive if and only if $\beta < \alpha-(n/2-1)$.  The allowed
  parameters $(\alpha,\beta)$ lie below this line (see
  Figure~\ref{fig:alpha-beta0} (left).

  For the first non-zero eigenvalue on functions, note first that
  $\lambda_1(X_0^i)$ (the first non-zero eigenvalue of the unilateral
  metric graph $X_0^i$) can be bounded from above by $\pi^2$, this
  follows immediately from the spectral relation~\eqref{spectra_rel}.
  Therefore, we conclude from~\eqref{eq:scale.fct} that $\lambda_1(\wt
  X^i) \to 0$ as $i \to \infty$ as $\beta<\alpha-(n/2-1)$ implies that
  $2/n-2(1-n)(\alpha-\beta)<0$.
\end{proof}
Actually, comparing the speed of divergence and convergence, we obtain
\begin{equation*}
  \exactEV 1 p (\wt X^i) \gtrsim
  \nu_i^{\frac{n^2}{2(n-1)}} \lambda_1(\wt X^i)^{-\frac{n^2}{4(n-1)}},
\end{equation*}
confirming again that we cannot have divergence for both eigenvalues
with our construction.

If we choose the family of graphs $(G^i)_i$ to consist of trees only,
we can modify any given manifold $X$ to become a graph-like manifold
with underlying tree graph (``growing a tree on $X$'', see
Remark~\ref{rem:gl-mfd}).  In particular, we can show the following.
\begin{cor}
  \label{cor:any.mfd}
  On any compact manifold $X$ of dimension $n \ge 3$, there exists a
  sequence of metrics $g_i$ of volume $1$ such that the infimum of the
  (non-zero) function spectrum converges to $0$, while the exact
  $p$-form eigenvalues ($2\le p \le n-1$) diverge.
\end{cor}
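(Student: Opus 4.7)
The plan is to realise $X$ as a graph-like manifold with an underlying tree graph and then invoke Proposition~\ref{prp:div.forms} directly. For each $i \ge 1$ I would choose a tree $T_i$ of uniformly bounded maximum degree with $\nu_i = |V(T_i)| \to \infty$ (e.g.\ a path graph on $i$ vertices). Using Example~\ref{ex:gl-mfd} with all transversal manifolds $Y_e = \mathbb{S}^{n-1}$ I first build a transversally trivial graph-like manifold on $T_i$; the hypothesis $H^p(\mathbb{S}^{n-1}) = 0$ for $1 \le p \le n-2$ holds since $n \ge 3$. Following Remark~\ref{rem:gl-mfd}, I then pick a leaf $v_0$ of $T_i$, leave the corresponding edge cylinder uncapped, and glue in $X$ with one open $n$-disc removed along the transversal sphere $\mathbb{S}^{n-1}$ (with a local smoothing). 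The resulting manifold is diffeomorphic to $X$ and carries a graph-like structure as in Section~\ref{subsec:g-like-mfds}, with the single exception that the vertex neighbourhood at $v_0$ is $X$ minus a disc rather than a sphere-minus-disc. Setting $\eps_i = \nu_i^{-\alpha}$ and assigning common edge length $\ell_i = \nu_i^{-\beta}$ then yields a two-parameter family of metrics $g_{\eps_i}$ on $X$.

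Next, I would choose $(\alpha,\beta)$ with $\beta < \alpha - (n/2-1)$. For $n \ge 3$ this condition also implies $\beta < \alpha - 1/(n-1)$, because $(n-1)(n-2) \ge 2$. Rescaling to unit volume via $g_i := (\vol(X, g_{\eps_i}))^{-2/n} g_{\eps_i}$, the argument of Proposition~\ref{prp:div.forms} yields on the one hand $\exactEV 1 p(X, g_i) \to \infty$ for $2 \le p \le n-1$ (via~\eqref{eq:ev.forms''}), and on the other hand $\lambda_1(X, g_i) \to 0$ (via~\eqref{eq:scale.fct} together with the universal upper bound $\lambda_1(X_0^i) \le \pi^2$ for equilateral metric graphs, available from~\eqref{spectra_rel}). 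This is precisely the content of Corollary~\ref{cor:any.mfd}.

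The main obstacle will be to reconcile the single non-standard vertex $v_0$ (whose neighbourhood is a scaled copy of $X$ minus a disc rather than one of the finitely many sphere-minus-discs prototypes) with the uniformity of constants used in Proposition~\ref{prp:div.forms}. However, $v_0$ is a single fixed vertex, independent of $i$: it contributes only one bounded summand in the McGowan-type lower bound of Proposition~\ref{prop:gentile-pagliara:95} that drives Theorem~\ref{thm3}, so the bulk lower bound $\exactEV 1 p(X, g_{\eps_i}) \gtrsim \eps_i^{-2}$ is preserved; and its volume contribution is only $\Err{\eps_i^n}$, negligible compared to the leading contributions in~\eqref{eq:volume''}. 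The upper bound $\lambda_1(X_0^i) \le \pi^2$ depends only on the equilateral structure of the underlying graph and is likewise unaffected. Hence all the asymptotics driving Proposition~\ref{prp:div.forms} are preserved, and the corollary follows.
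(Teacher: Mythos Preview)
Your proposal is correct and follows essentially the same approach as the paper: realise $X$ as a graph-like manifold over a tree via Remark~\ref{rem:gl-mfd} and then apply Proposition~\ref{prp:div.forms}. You have supplied more detail than the paper does (the explicit check that $\beta<\alpha-(n/2-1)$ implies $\beta<\alpha-1/(n-1)$, and the observation that the single non-prototype vertex neighbourhood $X$-minus-a-disc is $i$-independent and hence does not spoil uniformity), but the strategy is identical.
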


%
%
\addcontentsline{toc}{section}{References}
\ifthenelse{\isundefined \Olaf }
{\ifthenelse{\isundefined \otherauthorusesbibtex}
  {} 
  {
    \bibliographystyle{amsalpha}  
    \bibliography{\bibtexpath}
  }
}
{ 
  \bibliographystyle{/home/post/Aktuell/BibTeX/my-amsalpha}
  \bibliography{/home/post/Aktuell/BibTeX/literatur}
}

\providecommand{\bysame}{\leavevmode\hbox to3em{\hrulefill}\thinspace}

\end{document}